\pgfplotsset{compat=1.9}
\theoremstyle{plain}
\newtheorem{Th}{Theorem}[section]
\newtheorem{Lemma}[Th]{Lemma}
\newtheorem{Cor}[Th]{Corollary}
\newtheorem{Prop}[Th]{Proposition}
\newtheorem{theorem}[Th]{Theorem}
\newtheorem{assum}[Th]{Assumption}
 \theoremstyle{definition}
\newtheorem{defn}[Th]{Definition}
\newtheorem{Rem}[Th]{Remark}
\newtheorem{?}[Th]{Problem}
\newtheorem{Exs}[Th]{Examples}
\newcommand{\Hom}{{\rm{Hom}}}
\newcommand{\R}{{\mathbb{R}}}
\numberwithin{equation}{section}
\definecolor{qqwuqq}{rgb}{0,0,0}
\definecolor{darkgreen}{rgb}{0.01, 0.75, 0.24}
\newcommand{\bb}{{\mathrm{b}}}
\newcommand{\specb}{\textup{spec}_\bb}
\newcommand{\w}{\omega}
\newcommand{\W}{\Omega}
\newcommand{\U}{\mathscr{U}}
\newcommand{\phiT}{{}^\phi T}
\newcommand{\scT}{{}^{\mathrm{sc}}T}
\newcommand{\dM}{{\partial M}}
\newcommand{\Piperp}{\Pi^\perp}
\newcommand{\ssc}{{\mathrm{sc}}}
\newcommand{\zf}{{\mathrm{zf}}}
\newcommand{\Hsplit}{H_{\mathscr{H}}}
\renewcommand{\Mbar}{{\overline{M}}}
\renewcommand{\Psibar}{\overline{\Psi}}
\newcommand{\Umath}{{\mathscr{U}}}
\newcommand{\Umathbar}{{\overline{\mathscr{U}}}}
\newcommand{\Cmath}{{\mathscr{C}}}
\newcommand{\Hmath}{{\mathscr{H}}}
\newcommand{\calEwhat}{{\widehat{\calE}}}
\newcommand{\calFwhat}{{\widehat{\calF}}}
\newcommand{\Kwhat}{{\widehat{K}}}
\newcommand{\Ewhat}{{\widehat{E}}}
\newcommand{\Fwhat}{{\widehat{F}}}
\renewcommand{\lf}{{\mathrm{lb}}}
\renewcommand{\rf}{{\mathrm{rb}}}
\renewcommand{\ll}{{\mathrm{l}}}
\newcommand{\rr}{{\mathrm{r}}}
\newcommand{\ocup}{\,\overline{\cup}\,}
\renewcommand{\Atilde}{{\widetilde{A}}}
\renewcommand{\calH}{{\mathscr{H}}}
\newcommand{\codim}{\operatorname{codim}}
\newcommand{\lfz}{{\mathrm{lb}_0}}
\newcommand{\rfz}{{\mathrm{rb}_0}}
\newcommand{\bbfz}{{\bbf_0}}
\newcommand{\Diag}{\operatorname{Diag}}
\renewcommand{\interior}[1]{\mathrm{int}(#1)}
\date{\today}
\begin{document}

\title[Resolvent at low energy]{Spectral geometry on manifolds with fibred boundary metrics I: 
Low energy resolvent}

\author{Daniel Grieser}
	\address{Universit\"at Oldenburg, Germany}
	\email{daniel.grieser@uni-oldenburg.de}

\author{Mohammad Talebi}
	\address{Universit\"at Oldenburg, Germany}
	\email{mohammad.talebi@uni-oldenburg.de}

	\author{Boris Vertman}
	\address{Universit\"at Oldenburg, Germany}
	\email{boris.vertman@uni-oldenburg.de}
	
\thanks{2000 Mathematics Subject Classification. 58J05, 58J40, 35J70.}

\maketitle

\begin{abstract}
We study the low energy resolvent of the Hodge Laplacian on a manifold  equipped with a fibred boundary metric. We determine the precise asymptotic behavior of the resolvent as a fibred boundary (aka $\phi$-) pseudodifferential operator when the resolvent parameter tends to zero.
This generalizes previous work by Guillarmou and Sher who considered asymptotically conic metrics, which correspond to the special case when the fibres are points. The new feature in the case of non-trivial fibres is that the resolvent has different asymptotic behavior on the subspace of forms that are fibrewise harmonic  and on its orthogonal complement. To deal with this, we introduce an appropriate 'split' pseudodifferential calculus, building on and extending work by Grieser and Hunsicker. Our work sets the basis for the discussion of spectral invariants on $\phi$-manifolds. 
\end{abstract}

\setcounter{tocdepth}{1}
\tableofcontents

\section{Introduction and statement of the main results}

\subsection{Setting of manifolds with fibred boundary metrics}

Consider a compact smooth manifold $\Mbar$
with interior $M$ and boundary $\partial \Mbar$
(also denoted here by $\partial M$),
which is the total space of a fibration
 $\phi: \partial M \to B$ over a closed manifold $B$ with fibres given by 
copies of a closed manifold $F$. We fix a boundary defining function 
$x:\Mbar\to\R^+=[0,\infty)$, i.e. $x^{-1}(0)=\dM$ and $dx\neq0$ at $\dM$, and a trivialization $\Umathbar\cong [0,\eps)_x\times\dM$ of an open
neighborhood $\Umathbar$ of $\partial M$. We consider a $\phi$-metric (also called fibred boundary metric) on $M$, i.e. a Riemannian metric $g_\phi$ which on $\Umath \cong (0,\varepsilon)_x\times\partial M$ has the form
\begin{equation}\label{g-phi-def}
g_{\phi} \restriction \Umath = g_0 + h\,,\quad g_0=\frac{dx^{2}}{x^4} + \frac{\phi^{*}g_{B}}{x^{2}} + g_{F}
\end{equation}
where $g_{B}$ is a Riemannian metric on the base $B$,  $g_{F}$ is a symmetric bilinear form on the 
total space $\partial M$, restricting to Riemannian metrics on the fibres $F$, and the perturbation $h$ is a two tensor on $\Umath$ satisfying the bound given in Assumption \ref{assum1} below.
If  $h\equiv 0$ then
$g_\phi$ is called an exact $\phi$-metric. 

\begin{Exs}
There are various examples where such metrics arise naturally. 
The natural metric on the moduli space of non-abelian magnetic monopoles
of charge $2$ is a $\phi$-metric, cf. \cite{gravi ins, kottke-singer, fritsch}. Other examples include
gravitational instantons, i.e. complete hyperk\"ahler $4$-manifolds, as well as products
of scattering with closed manifolds.
\end{Exs}
 
Recall that the Hodge Laplace operator associated to  $g_\phi$ is $\Delta_{\phi} = (d+d^*)^2$ 
where $d$ denotes the exterior derivative and
$d^*$ its adjoint. This is a self-adjoint and non-negative operator in $L^2(M,\Lambda T^*M; \dvol_\phi)$, 
where $\dvol_\phi$ is the volume form for $g_\phi$, so the resolvent
$$ (\Delta_{\phi}+k^2)^{-1}$$
is defined for all $k\neq0$. We always take $k>0$. The objective of this paper is 
the precise analysis of the Schwartz kernel of the resolvent, in particular of its asymptotic behavior as $k\to0$.
More generally, our results apply to the Hodge Laplacian acting on sections of a vector bundle over $\Mbar$ equipped with a flat connection. \medskip

If the fibration $\phi$ is trivial, $\partial M=B\times F$ has  the product metric, and
$h\equiv 0$ then $\Delta_{\phi}$ acting on functions is, over $\Umath$, given explicitly by
\begin{equation}\label{Delta}
\Delta_{\phi} \restriction \Umath = -x^{4}\partial_{x}^{2} + x^{2}\Delta_{B} + \Delta_{F}  - (2-b)x^{3}\partial_{x},
\end{equation}
where $b= \dim B$, $\Delta_B$ is the Laplace Beltrami
 operator of $(B,g_{B})$ and 
$\Delta_{F}$ is the Laplace Beltrami operator on $(F,g_{F})$. 
The full Hodge Laplacian admits a similar structure. 
In the non-product case we recover under additional assumptions a similar structure, 
where $\Delta_{F}$ is now replaced by a family of Hodge Laplacians $\Delta_{F_y}$, associated to 
the family of Riemannian metrics $g_F(y)$ on $F$, with parameter $y\in B$. We will be explicit below.
\medskip

The operator $\Delta_{\phi}+k^2$ is, for any parameter $k\geq0$, an elliptic element in a general class of differential operators having a structure similar to \eqref{Delta}, called $\phi$-differential operators. Mazzeo and Melrose in \cite{mazzeo1998pseudodifferential} developed a pseudo-differential calculus, denoted by $\Psi^*_\phi$, which contains parametrices for such operators $P$. Therein they gave a stronger condition, called full ellipticity, that 
is equivalent to $P$ being Fredholm between naturally associated Sobolev spaces, and is necessary for $P$ 
to have an inverse in the calculus. The operator $\Delta_{\phi}+k^2$ is fully elliptic for $k>0$, but not for $k=0$ unless $\Delta_{F_y}$ is invertible for all $y\in B$, which may happen in the case of the Hodge Laplacian twisted by a flat vector bundle. 
In that case, the behavior of the resolvent can be deduced directly from 
\cite{mazzeo1998pseudodifferential}. The contribution of our paper is that we allow the fibrewise Laplacians to have a non-trivial kernel.
\medskip

If the fibre $F$ is a point then fibred boundary metrics are called scattering or asymptotically conic metrics. In this case the low energy resolvent has been studied before, with application to the boundedness of Riesz transforms:
for the scalar Laplacian this was first done in the asymptotically Euclidean case, i.e.\ for $B= \mathbb{S}^b$, by
Carron, Coulhon and Hassell \cite{carron2006riesz}, then for general base and using a different construction of the resolved double space (see below) by  Guillarmou and Hassell 
\cite{guillarmou2008resolvent, guillarmou2009resolvent}, who also allow a potential.
Guillarmou and Sher \cite{guillarmou2014low} then used the calculus of \cite{guillarmou2008resolvent} to analyze the low energy resolvent of the Hodge Laplacian and used this to study the behavior of analytic torsion under conic degeneration.
\medskip

\subsection{Assumptions and the main result}

We study the Schwartz kernel of the resolvent $(\Delta_{\phi} + k^{2})^{-1}$ as $k\to 0$ under 
additional assumptions that we shall list here. We shall also explain why we make these assumptions.

\begin{assum}\label{assum1}
The higher order term $h$ satisfies $\vert h \vert_{g_{0}} = O(x^3)$ as $x \to 0$.
\end{assum}
 
\noindent This assumption 
guarantees that the terms resulting from $h$ do not alter the leading order behaviour of the Hodge Laplacian, more precisely of the terms $P_{ij}$ in \eqref{eqn:square_phi-prelim}.

\begin{assum}\label{assum2}
We assume that $\phi: (\partial M, g_F + \phi^*g_B) 
\to (B, g_B)$ is a Riemannian submersion.
\end{assum}

\noindent This assumption is used in order to obtain the expression 
\eqref{hodge de rham} for the Hodge de Rham operator.
 \medskip

The next assumption requires some preparation: 
The spaces $\mathscr{H}_y=\ker \Delta_{F_y}$ of harmonic forms on the fibres have 
finite dimension independent of the base point $y \in B$, since they are isomorphic 
to the cohomology of $F$. It is a standard fact that these spaces then form a vector bundle $\mathscr{H}$ 
over $B$. The metric $g$ induces a flat connection on the bundle $\mathscr{H}$, see 
\cite[Proposition 15]{gravi ins}. We denote the twisted Gauss-Bonnet operator on $B$ 
with values in this flat vector bundle by $D_B = \mathfrak{d} + \mathfrak{d}^*$. 
Its definition will be recalled explicitly in 
\eqref{DB}. Write $N_B$ for the number operator on differential forms $\W ^*(B,\mathscr{H})$, 
multiplying any $\w \in \W ^\ell(B,\mathscr{H})$ by $\ell$. We define the family of operators  
\begin{align}\label{Indicial family} 
I_\lambda(P_{00}) := -\lambda^2 + \left( \begin{array}{cc}
D^2_{B} + \Bigl(\frac{b-1}2-N_B\Bigr)^{2} &2\mathfrak{d} \\
2\mathfrak{d}^* & D^2_B + \Bigl(\frac{b+1}2-N_B\Bigr)^{2}
\end{array} \right)
\end{align}
acting on $\W ^*(B,\mathscr{H})\oplus \W ^{*-1}(B,\mathscr{H})$.
This is the indicial family of a $\bb$-operator $P_{00}$ that will be introduced 
in \eqref{eqn:square_phi-prelim}. Its set of indicial roots is defined as 
\begin{align}\label{Indicial set} 
\specb(P_{00}) :=  \{ \lambda \in \C \mid 
I_\lambda(P_{00}) \ \textup{is not invertible} \}.
\end{align}
Noting that $I_\lambda(P_{00})$ is symmetric, we find that 
$\specb(P_{00}) \subset \R$ is real. We can now formulate our next 
assumption.

\begin{assum}\label{assum3}
Assume that $\specb(P_{00}) \cap[-1,1]=\varnothing$.
Due to symmetry of $I_\lambda(P_{00})$ under the reflection 
$\lambda \mapsto - \lambda$, this is equivalent to $\specb(P_{00}) \cap[-1,0]=\varnothing$.
\end{assum}

Note that even if Assumption \ref{assum3} is violated for the (full) Hodge Laplacian but holds for 
the Laplacian acting on forms of a fixed degree $k$, then our results hold for that degree.
\footnote{Compare with \cite{kottke-rochon}, their method does not allow for this conclusion.} \medskip

Assumption \ref{assum3} can be reformulated in terms of spectral 
conditions on $D_B$ precisely as in \cite[(21)]{guillarmou2014low}
with $d_N$ and $\Delta_N$ replaced by $\mathfrak{d}$ and $D^2_B$. It is satisfied if $D_B$ has a sufficiently large spectral gap around zero.
As we will see in Corollary \ref{cor:kernel}, Assumption \ref{assum3} implies the \textit{non-resonance condition}\footnote{It is natural to ask whether the non-resonance condition and $0 \notin \specb(P_{00})$ already imply Assumption \ref{assum3}. We conjecture that this is not the case since \eqref{noresonance-consequence} is a global condition on $M$ while $\specb(P_{00})$ is determined locally at the boundary. However, we do not have an explicit counterexample.}
\begin{equation}\label{noresonance-consequence}
\ker_{x^{-1}L^{2}(M,g_\phi) }\Delta_{\phi} = \ker_{L^{2}(M,g_\phi)}\Delta_{\phi}.
\end{equation}
imposed also by Guillarmou and Sher \cite{guillarmou2014low}, in addition to asking for 
$0 \notin \specb(P_{00})$. These conditions are needed in order to construct the Fredholm 
inverse for the Hodge Laplacian. Specifically, the assumption $0 \notin \specb(P_{00})$
is used to obtain a Fredholm inverse in \eqref{cphi-Fredholm}.
The no zero-resonances assumption is used in Lemma \ref{lem:ker piphi}, that jointly with the functional
analytic observation of Lemma \ref{funkana} yields the zf parametrix in \eqref{Gphi}
and \eqref{Gkphi}. Now, the full Assumption \ref{assum3} is used to ensure that 
such constructed parametrices actually act boundedly on $L^2(M,g_\phi)$.

\begin{assum}\label{assum4}
The twisted Gauss Bonnet operator $D_B$, defined in \eqref{DB}, commutes with the  orthogonal
projection $\Pi$ in \eqref{projections-def} onto fibre-harmonic forms. 
\end{assum}

This assumption implies that the off-diagonal terms in the decomposition of $\Delta_{\phi}$ with respect to the bundles $\mathscr{H}$ and its orthogonal complement in $\Cinf(F)$, near $\dM$, vanish quadratically at the boundary as $\phi$-operators. This assumptions is needed to analyze the structure of the Fredholm inverse of $\Delta_{\phi}$, as 
used by Grieser and Hunsicker in \cite[Theorem 2]{grieser-para}, cf. Remark \ref{Dpi-condition}.

\begin{assum}\label{assum5}
The base $B$ of the fibration $\phi: \partial M \to B$ is of dimension $\dim B \geq 2$.
\end{assum}

This assumption has also been imposed by Guillarmou and Hassel 
\cite{guillarmou2008resolvent} and \cite{guillarmou2009resolvent}, where 
in case of trivial fibres $F$ it means $\dim M \geq 3$. If $\dim B<2$ then the resolvent  has a different behavior as $k\to0$. We use the assumption  explicitly in \eqref{eqn:inverse Delta}. Our main result is now as follows.

\begin{theorem}\label{main-theorem-intro} 
Under the Assumptions \ref{assum1}, \ref{assum2}, \ref{assum3}, \ref{assum4} and  \ref{assum5},
the Schwartz kernel of the resolvent
$(\Delta_{\phi} + k^{2})^{-1}, k>0,$ lifts to a polyhomogeneous conormal
distribution on an appropriate manifold with corners, 
with a conormal singularity along the  diagonal. 
\end{theorem}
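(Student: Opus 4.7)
The plan is to construct a polyhomogeneous parametrix for $\Delta_{\phi}+k^2$ on an appropriately resolved double space, and then upgrade it to the true resolvent via a Neumann series in a composition-closed operator calculus that accommodates both the $\phi$-structure and the low-energy asymptotics. Heuristically, for $k$ bounded away from zero the operator is fully elliptic in the Mazzeo--Melrose $\phi$-calculus, while as $k\to 0$ the failure of full ellipticity is concentrated on fibrewise harmonic forms and produces new boundary faces that must be resolved. Starting from $\Mbar\times\Mbar\times[0,1]_k$, I would perform the blow-ups needed to reveal (i) the lifted diagonal, (ii) the $\phi$-face over the boundary diagonal, (iii) the left and right boundary faces, and (iv) a zero-face $\zf$ carrying the model of the low-energy limit; this is the natural fibred-boundary analogue of the resolved space used by Guillarmou and Hassell in the scattering case.

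Second, I would split the bundle of forms near $\dM$ into the range of the orthogonal projection $\Pi$ onto fibre-harmonic forms and its complement via $\Piperp$. On the range of $\Piperp$, the fibrewise Laplacian is uniformly invertible, so the restricted operator is fully elliptic as a $\phi$-operator and admits a parametrix with kernel rapidly decaying at the boundary, uniformly in $k\geq 0$. On the range of $\Pi$, the problem reduces to the $b$-type operator $P_{00}$ on $B$ with values in the flat bundle $\mathscr{H}$, whose indicial family \eqref{Indicial family} is invertible on $[-1,1]$ by Assumption \ref{assum3}. This invertibility lets one invert the model at $\zf$ on appropriately weighted $L^2$ spaces and identifies the indicial exponents of the parametrix there in terms of $\specb(P_{00})$. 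Assumption \ref{assum4} ensures that the blocks coupling $\Pi$ and $\Piperp$ vanish to order two at the boundary, precisely the hypothesis under which the Grieser--Hunsicker split calculus applies to the decomposition, and Assumption \ref{assum5} yields the weighted boundedness of the $b$-inverse on the base.

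Third, I would assemble the parametrix face by face. One first removes the diagonal singularity via the standard symbolic calculus on the blown-up space, then successively inverts the normal/model operators at each boundary hypersurface: the $\phi$-face for $k>0$ by full ellipticity, the boundary faces via the $b$-inverse of $P_{00}$ granted by Assumption \ref{assum3}, and the zero face via the Laplace-type model on the base acting on the flat bundle $\mathscr{H}$. The non-resonance consequence \eqref{noresonance-consequence} ensures that the $\zf$ model has no additional resonance states, so that its inverse acts on the correct space. The outcome is a parametrix $G$ with composition error $E=(\Delta_\phi+k^2)G-I$ whose Schwartz kernel vanishes at every boundary face of the resolved double space.

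The main obstacle will be the composition theorem for the split $\phi$--zero calculus and the convergence of the Neumann series $G\sum_{j\geq 0}(-E)^j$. One has to verify that the calculus is closed under composition and that $E$ acts as a contraction on appropriate polyhomogeneous function spaces, so that the Neumann correction yields the true resolvent without destroying polyhomogeneity. The technical core is at $\zf$, where kernels carry distinct polyhomogeneous expansions on the $\Pi$- and $\Piperp$-blocks, and one needs a pushforward/pullback theorem on the relevant triple space that tracks both index families simultaneously. This is precisely the step that forces the extension of the Grieser--Hunsicker split calculus advertised in the introduction, and I expect it to absorb most of the effort of the proof.
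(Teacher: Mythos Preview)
Your overall architecture matches the paper's: a resolved double space $M^2_{k,\phi}$, a split $(k,\phi)$-calculus encoding the $\mathscr{H}$--$\mathscr{C}$ decomposition, an initial parametrix built face by face, and a Neumann series correction via a composition theorem proved on a triple space. However, your description of the model at $\zf$ is wrong in a way that would derail the construction.

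The zero face $\zf$ is (a copy of) the $\phi$-double space $M^2_\phi$, and the model problem there is $\square_\phi$ itself acting on all of $M$, not a base operator on $B$ with values in $\mathscr{H}$. Inverting this model means constructing the Fredholm inverse $G_\phi$ of $\square_\phi$ on split Sobolev spaces, which is itself the content of an entire preliminary section (the split $\phi$-parametrix of Section~\ref{sec:split} and Corollary~\ref{cor:grihun fredholm inverse}). Moreover, since $\square_\phi$ generally has nontrivial $L^2$-kernel, the expansion at $\zf$ begins at order $k^{-2}$ with the orthogonal projection $\Pi_\phi$ onto $\ker_{L^2}\square_\phi$; only the $k^0$ term is $G_\phi$. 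Obtaining $G_\phi$ with the correct asymptotics, and checking that these match the boundary parametrix at $\zf\cap\bbfz$ and $\zf\cap\phi f_0$, requires relating the Fredholm inverses of $\square_\phi$ and $\square_{c\phi}=x^{-1}\square_\phi x^{-1}$ via a nontrivial functional-analytic argument (Lemmas~\ref{lem:ker piphi} and~\ref{funkana}). Your sketch conflates this global step with the genuinely local $b$-inverse of $P_{00}$, which appears instead at $\bbfz$.

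A smaller point: the paper constructs the boundary parametrix \emph{before} removing the diagonal singularity, not after. This order gives two-sided control of the $\mathscr{H}$--$\mathscr{C}$ splitting of the parametrix, which is needed for the precise index-set bookkeeping in the split calculus; doing the diagonal first (the Vaillant order) would yield only one-sided control.
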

We also determine the exponents and the leading terms in the asymptotics of the resolvent kernel, and make the different asympotic behavior with respect to the decomposition into fibrewise harmonic forms and their orthogonal complement explicit. The precise statement is given in Theorem \ref{thm:main thm detail}.
\medskip

\begin{Rem}
Our result admits some obvious extensions:
\begin{enumerate}
\item Similar to Assumption \ref{assum1}, replacing $\Delta_\phi$ by $\Delta_\phi + V$, where
$V = x^3 W$ with a positive\footnote{cf. \S \ref{Phi-vector-def} for the definition of ${}^{\phi}T^*M$}  $W \in C^\infty(\Mbar, \End(\Lambda{}^{\phi}T^*M))$, 
does not affect the argument and Theorem \ref{main-theorem-intro} still holds. Without the positivity condition on $V$, the problem is much more 
intricate since the potential might lead to non-positive spectrum and affect zero resonances, cf. \cite{guillarmou2009resolvent}.
\item As already noted in \cite[Theorem 6]{Sher}, the proof of Theorem \ref{main-theorem-intro} carries over verbatim to 
the case where the resolvent parameter $k^2$ is allowed to vary in the right half plane $\Re k>0$. Then 
the resolvent asymptotics is
uniform as $k\to0$ in angular sectors $|\arg k| \leq \frac\pi2-\eps$ for any $\eps>0$.
\end{enumerate}
\end{Rem}

\subsection{Key points of the proof}
As in the papers by Guillarmou, Hassell and Sher, our strategy to prove polyhomogeneity of the resolvent kernel is as follows: first, we construct an appropriate manifold with corners, which we call $M^2_{k,\phi}$, on which we expect the resolvent to be polyhomogeneous. The Hodge Laplacian behaves like a scattering Hodge Laplacian on the space of fibrewise harmonic forms and like a fully elliptic $\phi$-operator on the orthogonal complement of this space. The low energy resolvents for these two types of operators are described by two different blowup spaces, denoted $M^2_{k,\ssc,\phi}$ and $M^2_\phi\times\R_+$. Therefore, $M^2_{k,\phi}$ is chosen as a common blowup of these two spaces.
\medskip

The boundary hypersurfaces of $M^2_{k,\phi}$ then correspond to limiting regimes, so that the leading asymptotic term(s) of the resolvent kernel at each of them can be determined by solving a simpler model problem. The model problem at the $k=0$ face called $\zf$ involves the Fredholm inverse of $\Delta_\phi$, so we need to analyze this first.
Combining these solutions and an interior parametrix, obtained by inverting the principal symbol (which corresponds to the 'freezing coefficients' model problem at each interior point), we obtain an initial parametrix for the resolvent. We then improve this parametrix using a Neumann series argument and finally show by a standard argument that the exact resolvent has actually the same structure as the improved parametrix. \medskip

The contribution of our paper is that we analyze the low energy resolvent of $\Delta_{\phi}$ in the non-fully elliptic case, i.e. allowing the fibrewise Laplacians to have non-trivial kernel. A parametrix construction for a closely related operator, the Hodge Laplacian for the cusp metric $x^2g_\phi$, was given by Grieser and Hunsicker in \cite{grieser-para}, and a similar construction for the spin Dirac operator (assuming constant dimension for the kernel of the operators induced on the fibres) was given by Vaillant in \cite{vaillant}. Vaillant also analyzed the low energy behavior of the resolvent for this operator.
\medskip

For the construction of the Fredholm inverse of $\Delta_\phi$ we introduce and analyze a pseudodifferential calculus, the 'split' calculus (see Definition \ref{def:split phi calc}), which contains this Fredholm inverse, and reflects the fact that this operator exhibits different asymptotic behavior on the subspaces of fibre-harmonic forms and on its orthogonal complement. This construction is close to the construction by Grieser and Hunsicker in \cite{grieser-para}, where the notion of split operator was first introduced. 
For the proof of our main theorem we then construct a split resolvent calculus that additionally encodes the asymptotic behavior of the resolvent as $k\to0$. This split resolvent calculus combines the $\phi$-calculus of Mazzeo and Melrose, the resolvent calculus of Guillarmou and Hassell and the 
split calculus.
\medskip

\subsection{Structure of the paper}
Our paper is structured as follows. In Section \ref{sec:fund} we review fundamental 
aspects of geometric microlocal analysis that are important in the
present work. Then we review the pseudodifferential b-calculus in \cite{melrose1993atiyah}
and the pseudodifferential $\phi$-calculus in \cite{mazzeo1998pseudodifferential} in Sections \ref{sec:b-calc} and \ref{sec:phi-calc}. 
We state Fredholm property results and the asymptotic description of the parametrices 
for elliptic elements in both calculi. 
\medskip

In Section \ref{sec:split} we analyze the structure of the Hodge Laplacian and display its split structure with respect to the bundle of fibrewise harmonic forms and its orthogonal complement. We introduce the split pseudodifferential calculus, which is a variant of the $\phi$-calculus that reflects this splitting. We give an improved version of the parametrix construction for the Hodge Laplacian in that calculus, building on the construction by
Grieser and Hunsicker \cite{grieser-para}.
\medskip

In Section \ref{section 3} we review the low energy resolvent construction of Guillarmou, Hassell and Sher \cite{guillarmou2008resolvent}, \cite{guillarmou2014low} on scattering manifolds. In particular, we present the blowup space $M^2_{k,\ssc}$
due to Melrose and S{\'a} Barreto  \cite{melrosesabarreto}, which is used in those works, and its slightly more general cousin $M^2_{k,\ssc,\phi}$ that we need.
\medskip

The proof of our main theorem is given in Section \ref{sec:proof main thm}. 
We first construct the blowup space $M^2_{k,\phi}$ and then construct the initial parametrix. The final step towards the resolvent kernel requires a composition theorem, that we prove in Section \ref{section 6}.
\medskip

This work will have applications in the construction of renormalized zeta functions 
and analytic torsion 
in the setting of manifolds with $\phi$-metrics. These will the subject of a separate paper.

\begin{Rem}
The authors wish to acknowledge that in a parallel work by Kottke and Rochon \cite{kottke-rochon}, 
a result similar to Theorem \ref{main-theorem-intro} was obtained independently and simultaneously 
using partly different methods, in particular working with the Dirac operator, and not
relying on a split-pseudodifferential calculus. \end{Rem}

\medskip

\emph{Acknowledgements:} 
The authors are grateful to the anonymous referees for valuable comments.
The second author would like to thank Colin Guillarmou
for helpful discussions. He is grateful to the Mathematical Institute of University 
of Oldenburg for their hospitality and financial support. 
DG and BV were supported by the DFG Priority Research Program 2026 "Geometry at Infinity".

\section{Fundamentals of geometric microlocal analysis}\label{sec:fund}

We briefly recall here the main concepts and tools of geometric microlocal analysis that will later be used 
for the construction of the resolvent kernel for $\Delta_{\phi}$. The main reference 
is \cite{melrose1993atiyah}; see \cite{Gri:BBC} for an introduction.

\subsection{Manifolds with corners}
A compact manifold with corners $X$, of dimension $N$, is by definition modelled near each point $p\in X$ diffeomorphically by
$(\mathbb{R}^+)^{k}\times\mathbb{R}^{N-k}$ for some $k \in \mathbb{N}_0$, where $\R^+=[0,\infty)$.
If $p$ corresponds to $0$ then $k$ is called the codimension of $p$. A face of $X$, of codimension $k$, is the closure of a connected component of the set of points of codimension $k$. A boundary hypersurface is a face of codimension one, a corner is a face of codimension at least two. 
We assume that each boundary hypersurface $H$ is embedded, i.e. it has a defining function $\rho_H$, that is, a smooth function $X\to\R^+$ with
$H = \{\rho_H = 0\}$ and $d\rho_H$ nowhere vanishing on $H$. Then $\rho_H$ can be taken as coordinate of 
the first component of a tubular neighborhood $X\supset U\cong [0,\eps)\times H$. 
The set of boundary hypersurfaces of $X$ is denoted $\calM_1(X)$.
 In this section, we 
always work in the category of manifolds with corners. 

\subsection{Blowup of p-submanifolds}
Assume $P \subset X$ is a p-submanifold of a manifold 
with corners $X$, that is, near any $p\in P$ there is a local model for $X$ in which $P$ is locally a coordinate subspace. The blowup space $[X;P]$ is constructed by gluing
$X\backslash P$ with the inward spherical normal bundle of $P \subset X$. The latter is called the front face of the blowup.
The resulting space is equipped with a natural topology. It has a unique minimal
differential structure with respect to which smooth functions with compact support in the 
interior of $X\backslash P$ and polar coordinates around $P$ in $X$ are smooth, cf. \cite[\S 4.1]{melrose1993atiyah}.
\medskip

\noindent The canonical blowdown map 
\begin{align*}
\beta: [ X; P] \longrightarrow X,
\end{align*} 
is defined as the identity on  $X\backslash P$ and as
the bundle projection on the inward spherical normal bundle of $P \subset X$.
Finally, given a p-submanifold $Z\subset X$, we  define its 
lift under $\beta$ to a submanifold of $[ X; P]$ as follows.
\begin{enumerate}
\item if $Z \subseteq P$ then $\beta^{*}(Z) := \beta^{-1}(Z),$
\item if $Z \nsubseteq P$ then $\beta^{*}(Z) := \text{closure of \ }
\beta^{-1}(Z \backslash P).$
\end{enumerate}

\subsection{b-vector fields, polyhomogeneous functions and conormal distributions}
Let $X$ be a manifold with corners.
\begin{defn}(b-vector fields)
A b-vector field on $X$ is a smooth vector field which is tangential to all boundary hypersurfaces of $X$. 
The space of b-vector fields on $X$ is denoted  $\mathcal{V}_{\bb}(X)$.
\end{defn}

\begin{defn}(Index sets)
\begin{enumerate}
\item[(1)] A subset $E =\{(\gamma,p)\} \subset \mathbb{C}\times \mathbb{N}_{0}$ 
is called an index set if 
\begin{enumerate}
\item[(a)] the real parts $\text{Re}(\gamma)$ accumulate only at $+\infty.$
\item[(b)] For each $\gamma$ there exists $P_{\gamma}$ such that
$(\gamma, p) \in E$ implies $ p \leq P_{\gamma}$.
\item[(c)] If $(\gamma, p) \in E$ then $(\gamma + j, p') \in E$
for all $ j \in \mathbb{N}_{0}$ and $0 \leq p' \leq p.$ 
\end{enumerate} 
If $a\in\R$ then $a$ also denotes the index set $(a+\N_0)\times\{0\}$. 
Addition of index sets is addition in $\C\times\N_0$. 
For example, $a+E=\{(\gamma+a,p)\mid (\gamma,p)\in E\}$.
The \emph{extended union} of two index sets
$E$ and $F$ is defined as
\begin{align}
\label{extended}
E \, \overline{\cup} \, F = E \cup F \cup \{((\gamma, p + q + 1): \exists \, (\gamma,p) \in E,\ 
\mbox{and}\  (\gamma,q) \in F \}.
\end{align}
If $E$ is an index set and $a\in\R$ then we write 
\begin{align*}
E>a &:\iff (\gamma,k)\in E \text{ implies } \Re \gamma>a \\
E\geq a &:\iff (\gamma,k)\in E \text{ implies } \Re \gamma\geq a, \text{ and $k=0$ if $\Re \gamma=a$}.
\end{align*}
\item[(2)] An index family $\calE = (\calE_H)_{H\in\calM_1(X)}$ for $X$ is an assignment of an index set $\calE_H$ to each boundary hypersurface $H$.
Moreover, $\calE^H$ denotes the index family for $H$ which to any $H\cap H'\in\calM_1(H)$, where $H'\in\calM_1(X)$ has non-trivial intersection with $H$, 
assigns the index set $\calE_{H'}$.
\end{enumerate}
\end{defn}

\begin{defn}(Polyhomogeneous functions) A smooth function $\w $ on the interior of $X$ is called polyhomogeneous
on $X$ with index family $\calE$, we write $ \w  \in \mathcal{A}^{\calE}_{\phg}(X)$, if the following condition is satisfied:
$\w $ has an asymptotic 
expansion near each $H\in\calM_1(X)$  of the form 
$$
\w  \sim \sum_{(\gamma,p) \in \calE_{H}} a_{\gamma,p} \cdot \rho_{H}^{\gamma}
(\log\rho_H)^{p},  \quad \rho_{H} \longrightarrow 0,
$$
for some tubular neighborhood of $H$ with defining function $\rho_H$, where the coefficients $a_{\gamma,p}$ are
polyhomogeneous functions on $H$ with index family $\calE^H$. 
Since $\dim H < \dim X$, this defines polyhomogeneity by induction. \medskip

The asymptotic expansion above is assumed to be preserved under iterated application of b-vector fields. Its precise meaning is that there exists $\Gamma_0\in\R$ so that for all $\Gamma\in\R$ and any finite collection of b-vector fields $V_1,\cdots, V_N$ we have
$$
\bigl( V_1 \circ \cdots \circ V_N \bigr) \bigl(
\w  - \sum_{\stackrel{(\gamma,p) \in \calE_{H}}{\Re\gamma\leq\Gamma}} a_{\gamma,p} \cdot \rho_{H}^{\gamma}
(\log\rho_H)^{p} \bigr)
= O (\rho_H^\Gamma \prod_{H'\neq H} \rho_{H'}^{-\Gamma_0})
$$
It is a non-trivial fact that this condition is independent of the choice of tubular neighborhood, see \cite{Mel:CCDMWC}.
\end{defn}

\begin{defn}
 [Conormal distributions] \label{def:conormal distr}
 Let $P\subset X$ be a p-submanifold which is interior, i.e. not contained in $\partial X$. 
A distribution $u$ on $X$ is \textbf{conormal} of order $\mu\in\R$ with respect to  $P$ if it is smooth on $X\setminus P$ and near any point of $P$, with $X$ locally modelled by $(\R^+)^k_x \times \R^{N-k}_{y',y''}$ and $P=\{y''=0\}$ locally,
\begin{equation}
 \label{eqn:def conormal}
u(x,y',y'') = \int e^{iy''\eta''}a(x,y';\eta'')\,d\eta''
\end{equation}
for a symbol $a$ of order $m=\mu+\frac14\dim X - \frac12\codim P$.
\end{defn}
If $X=M\times M$ for a closed manifold $M$ and $P\subset X$ is the diagonal,
then $\dim X=2\dim P$, hence $m=\mu$, and conormal distributions are precisely the Schwartz kernels of pseudodifferential operators on $M$, 
with $m$ equal to the order of the operator. In our setting we will need to make the resolvent parameter $k$ into a variable, so $X=M\times M\times\R^+$ and $P=\Diag_M\times\R^+$, hence $m=\mu+\frac14$. The order of the operator still equals the order $m$ of the symbol, so its kernel is conormal of order $m-\frac14$.
\medskip

Polyhomogeneous sections of and conormal distributions valued in vector bundles over $X$ are defined analogously.

\subsection{b-maps and b-fibrations}
The contents of this subsection are due to Melrose \cite{Mel:CCDMWC}, \cite{melrose1993atiyah}, see also 
 \cite[\S 2.A]{mazzeorafe1991elliptic}.
\medskip

A smooth map between manifolds with corners is
one which locally is the restriction of a smooth map on a
domain of $\mathbb{R}^{N}$. We single out two classes of smooth maps,
such that polyhomogeneous functions behave nicely under the pullback and the push-forward
by these maps. We begin with the definition of a b-map.
  
\begin{defn} Consider two manifolds with corners $X$ and $X'$. 
Let $\rho_H$, $H\in\calM_1(X)$ and $\rho_{H'}$, $H'\in\calM_1(X')$ be defining functions. A smooth map $f : X' \to X$ is called a  b-map
if for every $H\in\calM_1(X)$, $H'\in\calM_1(X')$ there exists $e(H,H') \in \mathbb{N}_0$
and a smooth non vanishing function $h_H$ such that
\begin{equation} \label{hypb}
 f^{*}(\rho_{H}) = h_H \prod_{H'\in\calM_1(X')} \rho_{H'}^{e(H,H')}.
\end{equation}
\end{defn}

\noindent The crucial property of a b-map $f$ is that the pullback of polyhomogeneous 
functions under $f$ is again polyhomogeneous, with an explicit
control on the transformation of the index sets. 

\begin{Prop}
Let $f : X' \longrightarrow X $ be a b-map and 
$ u \in \mathcal{A}_{\text{phg}}^{\mathcal{F}}(X)$.
Then $f^{*}(u) \in \mathcal{A}_{\text{phg}}^{\mathcal{E}}(X')$
with index set $\mathcal{E} = f^{\bb}(\mathcal{F})$,
where $f^{\bb}(\mathcal{F})$ defined as in
\cite[A12]{mazzeorafe1991elliptic}.
\end{Prop}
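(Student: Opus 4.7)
The plan is to reduce to a local computation near a point of $X'$, substitute the b-map factorization \eqref{hypb} into the polyhomogeneous expansion of $u$, and read off the transformed index set. First, near a point $p' \in X'$ of codimension $k'$, I would introduce local model coordinates identifying a neighbourhood of $p'$ with $(\R^+)^{k'}\times \R^{N'-k'}$ with boundary defining functions $r_1,\dots,r_{k'}$, and correspondingly choose coordinates near $f(p')\in X$ in which the relevant boundary hypersurfaces $H_1,\dots,H_k$ of $X$ are cut out by $\rho_1,\dots,\rho_k$. The b-map assumption then reads $f^*\rho_i = h_i\prod_j r_j^{e_{ij}}$ for smooth non-vanishing functions $h_i$ and integer exponents $e_{ij} = e(H_i,H_j')\in\mathbb{N}_0$.

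Next I would substitute term by term. For a model contribution $a_{\gamma,p}\rho_i^{\gamma}(\log\rho_i)^{p}$ from the expansion of $u$ at $H_i$, with $(\gamma,p)\in\mathcal{F}_{H_i}$, one computes
\begin{equation*}
f^*\bigl(\rho_i^{\gamma}(\log\rho_i)^{p}\bigr) = h_i^{\gamma}\prod_j r_j^{\gamma e_{ij}} \cdot \Bigl(\log h_i + \sum_j e_{ij}\log r_j\Bigr)^{p}.
\end{equation*}
Because $h_i$ is smooth and positive, $h_i^{\gamma}$ is smooth and Taylor-expands at each boundary face of $X'$, so it merely shifts exponents by non-negative integers; the power of the logarithm expands via the multinomial theorem into a polynomial in $\log r_j$ of degree at most $p$ with smooth coefficients. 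Reading off the resulting expansion at each boundary hypersurface $H'_j$ of $X'$, the exponents in $r_j$ are non-negative integer combinations $\sum_i e_{ij}\gamma_i$ plus further non-negative integers, and the log powers are bounded by the corresponding sums of the $p_i$'s. This matches precisely the pushforward index family $f^b(\mathcal{F})$ as defined in \cite[A12]{mazzeorafe1991elliptic}. The conormal remainder in the partial asymptotic expansion of $u$ pulls back to a conormal remainder for $f^*u$ by the same substitution.

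Finally, I would verify stability under iterated b-vector fields on $X'$, as required in the definition of polyhomogeneity: since $f$ is a b-map, any $V\in\mathcal{V}_b(X')$ pushes forward under $df$ to a b-vector field on $X$ modulo smooth vector fields vanishing at the boundary, so termwise b-differentiation of the formal expansion of $f^*u$ is compatible with the substitution above, and the remainder estimates transfer accordingly. The main obstacle is the combinatorial bookkeeping at corners of $X'$ where several hypersurfaces $H_i$ of $X$ collapse onto the same $H'_j$: multiple contributions with coincident exponents $\sum_i e_{ij}\gamma_i$ generate additional log factors via the identity $r^{\alpha}(\log r)^{p}\cdot r^{\alpha}(\log r)^{q} = r^{2\alpha}(\log r)^{p+q}$, and closure of the resulting exponent--log pairs under this operation is precisely what the extended union \eqref{extended} and the more general definition of $f^b$ in \cite[A12]{mazzeorafe1991elliptic} are designed to encode.
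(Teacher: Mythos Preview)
The paper does not actually prove this proposition: it is stated as a review result from the literature, with the definition of $f^b(\mathcal{F})$ simply cited from \cite[A12]{mazzeorafe1991elliptic}. Your sketch is the standard argument and is essentially correct --- localize, substitute the factorization $f^*\rho_i = h_i\prod_j r_j^{e_{ij}}$ into each term of the joint expansion at a corner of $X$, and read off the resulting exponents and log powers at each face $H'_j$ of $X'$.

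One point to tighten: your final paragraph invokes the extended union \eqref{extended} to handle coincident exponents, but that mechanism is specific to the \emph{pushforward} theorem (Proposition~\ref{pushforward}), where integration of $r^{\gamma}(\log r)^p\cdot r^{-\gamma-1}(\log r)^q$ against $dr/r$ produces an extra logarithm. For pullback no integration occurs; a joint term $\prod_i \rho_i^{\gamma_i}(\log\rho_i)^{p_i}$ pulls back to $\prod_j r_j^{\sum_i e_{ij}\gamma_i}$ times a polynomial in the $\log r_j$ of total degree at most $\sum_i p_i$, and the resulting index set at $H'_j$ is just the $\N_0$-closure of $\{(\sum_i e_{ij}\gamma_i,\,q):(\gamma_i,p_i)\in\mathcal{F}_{H_i},\ q\le\sum_i p_i\}$. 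No extended union is needed, and the identity you wrote, $r^{\alpha}(\log r)^{p}\cdot r^{\alpha}(\log r)^{q} = r^{2\alpha}(\log r)^{p+q}$, concerns products rather than the situation the extended union encodes. This is a bookkeeping slip, not a structural gap; the rest of your argument is sound.
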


In order to obtain a polyhomogeneous function
under pushforward by $f$, one needs additional conditions on $f$.
On any manifold with corners $X$, we associate to the space of b-vector
fields $\mathcal{V}_{\bb}(X)$ the b-tangent bundle ${}^\bb TX$, such that $\mathcal{V}_{\bb}(X)$
forms the space of its smooth sections. There is a natural bundle map ${}^\bb TX\to TX$ (see Section \ref{subsec:bvector fields diff ops} for details in case $X$ is a manifold with boundary).
 The differential
$d_{x}f: T_{x}X' \longrightarrow T_{f(x)}X$ of a b-map $f$ lifts under this map to the $\bb$-differential  $d^{\bb}_{x}f : {}^{\bb}T_{x} X' \longrightarrow {}^\bb T_{f(x)} X$ for each $x \in X'$. We can now proceed with the following definition.
 
\begin{defn}\label{b-fib}  \ \\[-3mm]
\begin{itemize}
\item A b-map $f: X' \to X$ is a b-submersion if $d_{x}^{\bb}f$ is surjective
for all $x \in X'$.
\item $f$ is called b-fibration if $f$ is a b-submersion and, in addition,
does not map boundary hypersurfaces of $X'$ to
corners of $X$, i.e for each $H$ there exists
at most one $H'$ such that $e(H,H') \neq 0$ in \eqref{hypb}.
\end{itemize}
\end{defn}
 
\noindent We now formulate the Pushforward theorem due to Melrose \cite{melrose1993atiyah}.
The pushforward map acts on densities instead of functions, and hence we consider the
density bundle $\W (X)$ of $X$, and the corresponding $\bb$-density
bundle
\begin{equation}
 \label{eqn:def omega b}
\W _{\bb}(X) := \left( \prod\limits_{H\in\calM_1(X)} \rho_H^{-1}\right) \W (X).
\end{equation}
Then we write $\mathcal{A}_{\text{phg}}^{\mathcal{E}}(X, \W _{\bb}(X))$ for polyhomogeneous
sections of the b-density bundle $\W _{\bb}(X)$ over $X$, with index set $\mathcal{E}$. 
The precise result is now as follows. 

\begin{Prop} \label{pushforward}
Let $f : X' \longrightarrow X$ be  a b-fibration.
Then for any index family $\mathcal{E}'$
for $X'$, such that for each
$H'$ with $e(H,H') = 0$ for all $H$ we have\footnote{This condition means that $\calE'_{H'}> 0$ for  
any $H'$ which maps into interior of $X$. This condition simply ensures
integrability, so that pushforward is well-defined.} $\calE'_{H'} > 0$,
the pushforward map is well-defined and acts as $$
f_{*}: \mathcal{A}_{\phg}^{\mathcal{E}'}(X',\W _{\bb}(X'))
\longrightarrow \mathcal{A}_{\phg}^{f_\bb
(\mathcal{E}')}(X,\W _{\bb}(X)).$$
Here, $f_{\bb}(\mathcal{E}')$ is defined as in \cite[A.15]{mazzeorafe1991elliptic}.
\end{Prop}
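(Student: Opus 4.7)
The plan is to prove Proposition \ref{pushforward} by a local analysis, using a normal form for b-fibrations combined with an explicit computation of fibre integrals on model polyhomogeneous functions. This is essentially the strategy of Melrose's original proof.

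First, since polyhomogeneity is a local property at each boundary hypersurface $H\in\calM_1(X)$, I would cover $X'$ by coordinate charts adapted to the boundary structure and reduce to a local statement via a partition of unity. Near any point $p'\in X'$ we choose coordinates $(x_1,\dots,x_k,y)\in(\R^+)^k\times\R^{N-k}$ so that the boundary hypersurfaces $H'_i$ through $p'$ are cut out by $\{x_i=0\}$. The b-map condition \eqref{hypb} then forces $f$ to take, after a smooth change of variables on $X$, the local form $f(x,y)=(x^{e_1},\dots,x^{e_{k'}},y')$ with $x^{e_j}=\prod_i x_i^{e(H_j,H'_i)}$ and $y'$ smooth. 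The extra condition that $f$ be a b-fibration means that for every $i$ the family $\{e(H_j,H'_i)\}_j$ has at most one non-zero entry, i.e.\ each $H'_i$ maps into at most one $H_j$ (or into the interior of $X$). This is the geometric input that prevents corners of $X$ from appearing under $f$.

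Second, on such a chart the b-density $\Omega_b(X')$ is trivialised by $\prod_i \tfrac{dx_i}{x_i}\,dy$, and the pushforward $f_*$ amounts to integrating out the fibre variables. The computation therefore reduces to the elementary model integral
\[
\int_0^\infty \chi(x)\,x^\gamma(\log x)^p\,\frac{dx}{x},
\]
and its iteration in several variables. Two regimes arise: (i) if $H'_i$ maps into the interior of $X$, i.e.\ $e(H_j,H'_i)=0$ for all $j$, the variable $x_i$ must be genuinely integrated out, and the hypothesis $\calE'_{H'_i}>0$ is exactly what guarantees integrability at $x_i=0$ and a smooth contribution to $f_*(u)$. (ii) If $H'_i$ maps onto some $H_j$, we change variables $t=x_i^{e(H_j,H'_i)}$, and collect the contributions from all hypersurfaces of $X'$ mapping onto $H_j$. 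This collection process produces the index set $f_b(\calE')$ at $H_j$, namely the union of the scaled index sets $\{(\gamma/e(H_j,H'_i),p):(\gamma,p)\in\calE'_{H'_i}\}$, with extended unions \eqref{extended} inserted whenever two such scaled sets share an exponent—this is precisely the mechanism by which coincident exponents generate logarithmic enhancements via repeated integration by parts.

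Third, one must verify that the resulting expansion is stable under iterated application of b-vector fields on $X$, as required by the definition of polyhomogeneity. This uses the fact that b-vector fields pull back under a b-map to sums of b-vector fields on $X'$ with smooth coefficients, so differentiations commute with the fibre integral and can be absorbed into the asymptotic expansion on $X'$. Corners of $X$ are handled inductively on codimension, using the fact that a b-fibration restricts to b-fibrations on boundary faces, which makes the expansions of $f_*(u)$ at adjacent boundary hypersurfaces automatically compatible.

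The main obstacle is the bookkeeping on index sets: verifying that the extended unions in \eqref{extended} arise precisely when several hypersurfaces of $X'$ feed the same exponent into a single hypersurface of $X$, and that the logarithm powers accumulate correctly. The analytic content—integrability, pullback of b-vector fields, and the model integral—is comparatively routine; it is the combinatorial formula for $f_b(\calE')$ that carries the substance of the theorem and requires the care of \cite[A.15]{mazzeorafe1991elliptic}.
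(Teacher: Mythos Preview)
Your outline follows the standard Melrose strategy and is correct in spirit. However, the paper does not supply its own proof of this proposition: it is stated in Section~\ref{sec:fund} as a known result due to Melrose, with the reference \cite{melrose1993atiyah} (and \cite[A.15]{mazzeorafe1991elliptic} for the index set formula), and no argument is given. So there is nothing to compare against; your sketch is essentially a reconstruction of the proof one finds in the cited sources.
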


\subsection{Operators acting on half-densities} \label{subsec:half densities}

We will always identify an operator with its Schwartz kernel via integration, so
it is natural to consider densities. The most symmetric way to do this is 
using half-densities: if the Schwartz kernel is a half-density then the operator 
it defines  maps half-densities to half-densities naturally. However, differential operators are 
not typically given as acting on half-densities. 

The connection is made by fixing a positive 
real density $\nu$ (in this paper, typically the volume form associated to $g_\phi$ or a related density) on $X$. This defines an isometry
$$L^2(X,\nu)\to L^2(X,\W ^{\frac12})\,\quad
 u \mapsto u\,\nu^\frac12 
$$
where $L^2(X,\nu):=\{u:X\to \C\mid \int_X |u|^2 \,\nu<\infty\}$ and  $L^2(X,\W ^\frac12)$ is the space of square-integrable half-densities on $X$.\footnote{Note that $L^2(X,\W ^\frac12)$ is naturally identified with $L^2(X,\W _{\bb}^\frac12)$ (if $X$ has corners) -- one could also write any other rescaled density bundle here -- since square integrability is intrinsic for half-densities.} \medskip

Then if $A$ is an operator acting in $L^2(X,\nu)$ (i.e. on functions), the operator on half-densities induced by this identification is given by
 $$ \Atilde (u \nu^\frac12) := (Au) \nu^\frac12.$$
Note that
 $$ A\text{ symmetric in }L^2(X,\nu) \iff \Atilde \text{ symmetric in } L^2(X,\W ^\frac12)$$
 since by definition of $\Atilde$ we have
$
\int_X Au_1\cdot \overline{u_2}\, \nu = \int_X \Atilde (u_1\nu^\frac12) \cdot\overline{u_2\nu^\frac12}.
$ 
Also, if $A$ is given by an integral kernel $K$ with respect to $\nu$, i.e.
$(Au)(p)=\int K(p,p')u(p')\,\nu(p')$ then 
 $\widetilde{A}$ is given by the integral kernel $\widetilde{K}$ where $\widetilde{K}$ is the half-density
\begin{equation}
 \label{eqn:operators half-densities}
\widetilde{K}(p,p') = K(p,p') \,\nu(p)^\frac12\,\nu(p')^\frac12.
\end{equation}

\noindent In practice we often write $A$ instead of $\Atilde$. 

\section{Review of the pseudo-differential b-calculus}\label{sec:b-calc}

In this section we review elements of the b-calculus \cite{{melrose1993atiyah}}.
In this section  $\Mbar$ is a compact manifold with boundary $\partial M$, of dimension $n$. In contrast to the rest of the paper,
$\partial M$ need not be fibred.
 
\subsection{b-vector fields and b-differential operators} \label{subsec:bvector fields diff ops}

Recall that the space of b-vector fields $\mathcal{V}_{\bb} = \mathcal{V}_{\bb}(\Mbar)$ is defined as the 
space of smooth vector fields on $\Mbar$ which are tangential to $\partial M$.
Fix local coordinates $(x,\theta)$ near a boundary point, where $x$ defines the boundary, so that $\theta = \{\theta_i\}_i$ define local coordinates on $\partial M$.
Then, $\mathcal{V}_{\bb}$ is spanned, locally freely over $\Cinf(\Mbar)$, by 
$$
(x\partial_{x}, \partial_{\theta_{i}}).
$$
The b-tangent bundle $^\bb TM$ over $\Mbar$ is defined by requiring its space of smooth sections to be 
$\mathcal{V}_{\bb}$. Interpreting an element of $\calV_\bb$ as a section of $TM$ rather than of $^\bb TM$ defines a vector bundle map $^\bb TM\to TM$ which is an isomorphism over the interior of $M$ but has kernel $\Span\{x\partial_x\}$ over $\dM$.  The dual bundle of $^\bb TM$, the b-cotangent bundle, is denoted by $^\bb T^*M$. It has local basis $(\frac{dx}x, d\theta_i)$.
Let us also consider some Hermitian vector bundle $E$ over $\Mbar$.\medskip

The space of b-differential operators $\textup{Diff}^{m}_\bb(M; E)$ of order $m \in \N_0$ with values in $E$,
consists of differential operators of $m$-th order on $M$, given locally near the boundary $\partial M$ by 
the following differential expression (we use the convention $D_x = \frac{1}{i}\partial_x$ etc.)
\begin{equation}\label{b operator}
P = \sum_{q+ \vert \alpha\vert \leq m} P_{\alpha, q}(x,\theta)(xD_{x})^{q}(D_{\theta})^{\alpha},
\end{equation}
where the coefficients $P_{\alpha, q} \in C^\infty (\overline{\U},\End(E))$ are smooth sections of $\End(E)$.
Its b-symbol is given at a base point $(x,\theta) \in \overline{\U}$
by the homogeneous polynomial in $(\xi,\zeta)\in\R\times\R^{n-1}$
\begin{equation*}
\sigma_\bb(P)(x,\theta; \xi,\zeta) = \sum_{j+ \vert \alpha \vert = m} P_{\alpha, q}(x,\theta) \xi^{q}\zeta^{\alpha}
\end{equation*}
Invariantly this is a function on $^\bb T^*M$ (valued in $\End(E)$) if we identify $(\xi,\zeta)$ with $\xi\frac{dx}x+\zeta\cdot d\theta \in {}^\bb T^*_{(x,\theta)}M$. An operator $P \in \textup{Diff}^{m}_\bb(M; E)$ is said to be b-elliptic if 
$\sigma_\bb(P)$ is invertible on $^\bb T^{*}M \backslash \{0\} $. Writing $P^{m}(^\bb T^{*}M; E)$ for the space of 
homogeneous polynomials of degree $m$ on the fibres of $^\bb T^{*}M$ with values in $\End(E)$, the 
b-symbol map defines a short exact sequence
\begin{equation}
0 \longrightarrow \text{Diff}_\bb^{m-1}(M; E) \hookrightarrow \text{Diff}_\bb^{m}(M; E) 
\overset{\sigma_\bb \ }{\longrightarrow} P^{m}(^\bb T^{*}M; E) \longrightarrow 0.
\end{equation}

\subsection{b-Pseudodifferential operators} 

Parametrices to b-elliptic b-differential operators are polyhomogeneous conormal 
distributions on the b-double space that we now define. Consider the double space $\Mbar \times \Mbar$ and 
blow up the codimension two corner $\partial M \times \partial M$.  This defines the 
b-double space
\begin{equation*}
 M^{2}_{\text{b}} =
  [ \Mbar \times \Mbar ; \partial M \times \partial M].
\end{equation*}
\noindent We may illustrate this blowup as in Figure \ref{figure 1}, where $\theta,\theta'$ are omitted.
As usual, this blowup can be described in projective local coordinates.
If $(x,\theta),(x',\theta')$ are local coordinates on the two copies of $\Mbar$ near the boundary 
then local coordinates near the upper corner of the resulting front face bf are given by 
\begin{equation}\label{sx}
s = \frac{x}{x'}, \ x',\ \theta, \ \theta',
\end{equation}
where $s$ defines lb and $x'$ defines bf locally. Interchanging the roles of $x$ and $x'$, we get projective local coordinates
near the lower corner of bf. Pullback by the blowdown map $\beta_\bb$ is simply a change of 
coordinates from standard to projective coordinates. 
We will always fix a boundary defining function $x$ for $\dM$ and choose $x'=x$ as functions on $\Mbar$. Then $s$ is defined on a full neighborhood of $\bbf\setminus\rf$, and if in addition $\theta'=\theta$ as (local) functions on $\Mbar$ then the b-diagonal,
$$ \Diag_{\bb} :=\beta_\bb^*\Diag_\Mbar,\quad \Diag_\Mbar=\{(p,p)\mid p\in \Mbar\}\subset\Mbar\times\Mbar\,,$$ 
is locally $s=1,\theta=\theta'$. It is a p-submanifold of $M^2_{\bb}$.

\medskip

 \begin{figure}[h]    
\centering

\begin{tikzpicture}[scale = 1]   
    \draw[->] (0,0) -- (0,2);
    \draw[->] (0,0) -- (2,0);
    \node at (0,2.4) {$x'$};
    \node at (2.4,0) {$x$};
       \node at (4.7,1.5) {$\lf$};
           \node at (6.7,-0.3) {$\rf$};
           \node at (6.1,0.9) {$\bbf$};
      
    \draw[<-] (2.7,1) -- node[above] {$\beta_\bb$} (4.3,1);

   \begin{scope}[shift={(5,0)}]
        \draw[-](1,0) --(2,0);
        \draw[-](0,1)--(0,2);
         \draw  (1,0) arc (0:90:1);
          \node at (0,2.4) {$x'$};
           \node at (2.4,0) {$x$};
           
    \end{scope}
\end{tikzpicture}
\caption{b-double space $M^{2}_\bb$ and $\beta_\bb: M^{2}_\bb \to \Mbar^2$.} \label{figure 1}
\end{figure}

We defined b-densities in \eqref{eqn:def omega b}. 
The b-density bundle $\W _{\bb}(\Mbar^2)$ on   
the double space $\Mbar^2$ has local basis
$$ 
\frac{dx}x \frac{dx'}{x'} d\theta d\theta'.
$$
The b-density bundle on $M^2_{\bb}$ is, in coordinates \eqref{sx}, spanned by 
$$
 \frac{ds}{s} \frac{dx'}{x'} d\theta d\theta'.
$$
Note that
$\W _{\bb}(M^2_{\bb}) = \beta_\bb^* \W _{\bb}(\Mbar^2)$.
The corresponding half b-density bundle is denoted by $\W ^{1/2}_{\bb}(M^2_{\bb})$.
\medskip

We can now define the \emph{small and full calculus} of pseudo-differential 
b-operators, following \cite{{melrose1993atiyah}}. Note that we identify the 
operators with the lifts of their Schwartz kernels to $M^2_{\bb}$.
Recall from Section \ref{subsec:half densities} that operators act on half-densities, so their Schwartz kernels are half-densities.
\begin{defn}\label{b-calculus-basic} 
Let $\Mbar$ be a compact manifold with bundary and $E\to\Mbar$ a vector bundle.
\begin{itemize}
\item[1.] The small calculus $\Psi_{\bb}^m(M; E)$ of b-pseudodifferential operators is the space of distributions on $M^2_{\bb}$ taking values in $\W ^{1/2}_{\bb}(M^2_{\bb}) \otimes \textup{End}(E)$
 which are conormal with 
respect to the b-diagonal, smoothly down to $\bbf$, and vanish to infinite order at $\lf$, $\rf$.
\item[2.] The full calculus $\Psi_{\bb}^{m,\calE}(M; E)$ of b-pseudodifferential operators is defined as
\begin{align*}
&\Psi_{\bb}^{m,\calE}(M; E) := \Psi_{\bb}^{m}(M; E) + \calA_\bb^\calE(M;E) \\
&\textup{where} \, \calA_\bb^\calE(M;E) := \calA_{\text{phg}}^\calE(M^2_{\bb}, 
\W ^{1/2}_{\bb}(M^2_{\bb}) \otimes \textup{End}(E)),
\end{align*}
if $\calE$ is an index family for $M^2_{\bb}$ with $\calE_\bbf\geq 0$.
We sometimes leave out the bundle $E$ from the notation if it is clear from the context.
\footnote{This is a coarse version of the Definition 5.51 given in \cite{melrose1993atiyah}: there, the index sets are given for $\lf$ and $\rf$ only, and then the $\calA_{\text{phg}}$ term is replaced by $\calA_{\text{phg}}^{\calE_\lf,0,\calE_\rf}(M^2_{\bb}) + \calA_{\text{phg}}^{\calE_\lf,\calE_\rf}(\Mbar^2)$. If $\calE_\lf+\calE_\rf>0$, which is true for all index sets of operators appearing in our paper, this is contained in the given definition by the pull-back theorem, with $\calE_\bbf=0 \cup (\calE_\lf+\calE_\rf)$.
Also, this notation, and similar notation used below for other calculi, is not the same as that, e.g., in \cite[Definition 26]{grieser2009pseudodifferential}. Here we assume the conormal singularity to have smooth coefficients up to $\bbf$, while there they have index set $\calE_\bbf$. This is especially relevant when the index set at the front face is allowed to contain negative exponents, as in the definition of the $(k,\ssc)$ calculus.
}
\end{itemize}
\end{defn}
Here $\End(E)$ is the vector bundle over $M^2_{\bb}$ which is the pullback of the bundle over $\Mbar\times\Mbar$ that has fibre $\Hom(E_{p'},E_p)$ over $(p,p')\in 
\Mbar\times\Mbar$.
Note that $\Psi^{-\infty,\calE}_{\bb}(M;E)  = \calA_{\bb}^\calE(M;E)$ if $0\subset\calE_\bbf$.

\subsection{Fredholm properties of b-operators}\label{subsection-I} 
For any $P \in \textup{Diff}^{m}_\bb(M; E)$ of the form \eqref{b operator} locally near the 
boundary, we define the corresponding indicial operator $I(P)$ and indicial family $I_\lambda(P)$ by 
\begin{align*}
&I(P) = \sum_{q+ \vert \alpha\vert \leq m} P_{\alpha, q}(0,\theta) (xD_x)^q (D_{\theta})^{\alpha}, \\
&I_\lambda (P) = \sum_{q+ \vert \alpha\vert \leq m} P_{\alpha, q}(0,\theta)(\tfrac1i\lambda)^{q}(D_{\theta})^{\alpha},
\end{align*}
where the latter is a family of differential operators on $\dM$, acting in $L^2(\dM;E\restriction \dM)$. The set of indicial roots 
$\specb(P)$ is defined as
$$
\specb(P):= \{\lambda \in \C \mid I_{\lambda}(P) \ \textup{is not invertible}\}.
$$
Before we can proceed with stating the Fredholm theory results for b-operators, 
let us define weighted b-Sobolev spaces for $m \in \mathbb{R}$ and $\ell \in \mathbb{R}$ 
\begin{align*}
&x^{\ell}H^{m}_\bb(M;E) := \{ u = x^{\ell} \cdot v 
 \,\vert\, \forall P \in 
\Psi^{m}_\bb(M;E): P v \in  L^{2}(M;E)\}.
\end{align*}
Note that we define $L^2(M;E) \equiv L^2(M; E; \dvol_{\bb})$ with respect to the b-density $\dvol_{\bb}$,
which is a non-vanishing section of the b-density bundle $\W _{\bb}(\Mbar)$.

\begin{theorem}[Parametrix in the b-calculus] \label{thm: b-parametrix}
Let $P \in \textup{Diff}^m_{\bb}(M; E)$ be b-elliptic. Then for each $\alpha \notin \Re (\specb(P))$ 
there is an index family $\calE(\alpha)$ for $M^2_{\bb}$ satisfying
$$ \calE(\alpha)_{\lf} > \alpha,\quad \calE(\alpha)_{\rf} > -\alpha,\quad \calE(\alpha)_{\bbf} \geq 0,$$
and a parametrix
$Q_{\alpha} \in \Psi_{\bb}^{-m, \calE}(M; E)$,
inverting $P$ up to remainders
$$
P \circ Q_{\alpha} = \Id - R_{r,\alpha},\quad Q_{\alpha} \circ P= \Id - R_{l,\alpha},$$
where the remainders satisfy
$$ R_{r,\alpha} \in x^\infty\, \Psi_{\bb}^{-\infty,\calE(\alpha)} (M;E)
,\quad
R_{l,\alpha} \in \Psi_{\bb}^{-\infty,\calE(\alpha)}(M;E)\, x^\infty.
$$
The restriction of the Schwartz kernel of $Q_{\alpha}$ to $\bbf$ is given by the inverse of the
 indicial operator $I(P)$ in $x^\alpha L^2(\R^+ \times \dM, E)$, with weight $\alpha$, i.e. having asymptotics as dictated by $\calE(\alpha)$ at $\lf$ and $\rf$. 
 
The index family $\calE(\alpha)$ is determined by $\specb(P)$
and satisfies
\begin{equation}
\label{eqn:index sets}
\begin{split}
\pi\calE(\alpha)_\lf &= \{z+r \mid z\in \specb(P), \Re z >\alpha,r\in\N_0\},\\
\pi\calE(\alpha)_\rf &= \{-z+r \mid z\in \specb(P), \Re z <\alpha,r\in\N_0\},  
\end{split}
\end{equation}
where $\pi:\C\times\N_0\to\C$ is the projection onto the first factor, i.e. we neglect logarithms. 
\end{theorem}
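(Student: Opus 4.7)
My plan follows the standard Melrose b-calculus construction, broken into an interior step, an indicial step, and an asymptotic summation step.

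\medskip

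\textbf{Step 1 (Interior/symbolic parametrix).} Since $P$ is b-elliptic, $\sigma_b(P)$ is invertible on ${}^bT^*M\setminus\{0\}$. Using a standard symbolic construction on $M^2_b$ (pseudodifferential inversion near the b-diagonal, i.e.\ working in local projective coordinates \eqref{sx} and inverting the symbol modulo lower order), I would build $Q_{-1}\in\Psi_b^{-m}(M;E)$ such that $P\circ Q_{-1}=\Id - R_{-1}$ with $R_{-1}\in\Psi_b^{-\infty}(M;E)$. Because $Q_{-1}$ and $R_{-1}$ lie in the small calculus, their Schwartz kernels vanish to infinite order at $\lf$ and $\rf$, but $R_{-1}$ still restricts non-trivially to $\bbf$.

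\medskip

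\textbf{Step 2 (Inverting the normal/indicial operator at $\bbf$).} The restriction of $R_{-1}$ to $\bbf$ gives a kernel of an operator on the model half-cylinder $\R^+_s\times\dM$ which is dilation-invariant in $s$. To kill this error to infinite order at $\bbf$ I need a model solution operator, obtained by inverting $I(P)$ on $\R^+_s\times\dM$ in the weight $x^\alpha L^2$. This is done by Mellin transform in $s$: the Mellin-transformed indicial operator is exactly the indicial family $I_\lambda(P)$, and $b$-ellipticity together with the meromorphy of $I_\lambda(P)^{-1}$ implies that $I_\lambda(P)^{-1}$ exists as a meromorphic family of pseudodifferential operators on $\dM$ with poles precisely at $\specb(P)$. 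The condition $\alpha\notin \Re \specb(P)$ ensures the contour $\{\Re\lambda=\alpha\}$ avoids all poles, so the inverse Mellin transform along this contour defines a kernel $K_\alpha^\bbf$ on $\bbf$; this will be the restriction of $Q_\alpha$ to $\bbf$.

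\medskip

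\textbf{Step 3 (Index sets via contour shift).} Shifting the Mellin contour through the poles of $I_\lambda(P)^{-1}$ produces, by residue calculus, asymptotic expansions of $K_\alpha^\bbf$ at $\lf$ (i.e.\ $s\to 0$) and at $\rf$ (i.e.\ $s\to\infty$). The poles with $\Re z>\alpha$ contribute to the expansion at $\lf$, giving $\pi\calE(\alpha)_\lf=\{z+r\mid z\in\specb(P),\ \Re z>\alpha,\ r\in\N_0\}$, and poles with $\Re z<\alpha$ contribute at $\rf$, giving $\pi\calE(\alpha)_\rf=\{-z+r\mid z\in\specb(P),\ \Re z<\alpha,\ r\in\N_0\}$. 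Higher orders (the $+r$) come from the iterative construction: the next indicial problem requires inverting $I(P)$ applied to terms arising from the Taylor expansion of $P$ at $\bbf$, each step shifting the index by one. I may also encounter logarithms in case of non-simple poles; these are absorbed in the second factor of the index set, which is what makes the true index family $\calE(\alpha)$ (not just the projection $\pi\calE(\alpha)$) necessary.

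\medskip

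\textbf{Step 4 (Asymptotic summation and removal at $\rf$).} Adding $K_\alpha^\bbf$ (suitably extended off $\bbf$ with the expansions dictated by $\calE(\alpha)$) to $Q_{-1}$ improves the error so it vanishes to first order at $\bbf$. Taylor-expanding $P$ at $\bbf$ and iterating the indicial inversion step by step with successive corrections $K^\bbf_{\alpha,j}$, I produce errors vanishing to arbitrarily high order at $\bbf$. Borel/asymptotic summation in the polyhomogeneous calculus yields a single $Q_\alpha\in\Psi_b^{-m,\calE(\alpha)}(M;E)$ whose error $R_{r,\alpha}=\Id - P\circ Q_\alpha$ lies in $\calA_{\phg}^{\calE(\alpha)}$ and vanishes to infinite order at $\bbf$ and $\lf$. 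To gain the extra $x^\infty$ at $\rf$ claimed for the right parametrix, one more Neumann-type correction is used: since $R_{r,\alpha}$ has index set $>-\alpha$ at $\rf$, a final improvement analogous to Step 2 pushes the index at $\rf$ to infinity while leaving the structure at $\bbf,\lf$ intact. The left parametrix $Q_{\alpha,l}$ is obtained symmetrically (or by taking the adjoint of the right parametrix for $P^*$); the usual argument $Q_{\alpha,l}=Q_{\alpha,l}(PQ_\alpha+R_{r,\alpha})=(Q_{\alpha,l}P)Q_\alpha+\cdots$ shows that both are equal modulo the stated remainders.

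\medskip

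\textbf{Expected main difficulty.} The delicate part is the bookkeeping in Step 3: one must track how the successive indicial inversions shift indices, verify that the resulting index family is well-defined (i.e.\ satisfies the accumulation condition of an index set), and control the appearance of logarithmic factors when the poles of $I_\lambda(P)^{-1}$ have order $>1$. Establishing uniform operator-valued meromorphy of $I_\lambda(P)^{-1}$ as a function of $\lambda$, and carrying this meromorphy through the inverse Mellin transform to obtain polyhomogeneous (not merely conormal) expansions at $\lf$ and $\rf$, is the technical heart of the argument.
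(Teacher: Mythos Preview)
The paper does not give its own proof of this theorem; it is stated as a review result from Melrose's book \cite{melrose1993atiyah}. Your outline is the standard Melrose construction and is essentially correct in Steps 1--3.

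There is, however, a genuine confusion in your Step 4. You claim that after the asymptotic summation at $\bbf$ the error already vanishes to infinite order at $\bbf$ \emph{and} $\lf$, and that an additional step is then needed to push the $\rf$ index set to infinity. Both assertions are wrong. The indicial correction $K^{\bbf}_\alpha = I(P)^{-1}\bigl(R_{-1}|_{\bbf}\bigr)$ acquires a nontrivial expansion at $\lf$ (with index set $\calE(\alpha)_\lf$) precisely because $I(P)^{-1}$ is a convolution in $s$ and is not local; applying the full operator $P$ to its extension does not cancel this expansion, so after the $\bbf$-iteration the error still has index set $\calE(\alpha)_\lf$ at $\lf$. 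A separate step is required: one solves $Pu\equiv R$ term by term in the Taylor expansion at $\lf$, using that $P$ acts on the left variable (cf.\ \cite[Lemma 5.44]{melrose1993atiyah}, and Step 2 in the proof of Theorem \ref{thm:split phi-parametrix} above for the analogous argument in the split case). Conversely, you have misread the conclusion: $R_{r,\alpha}\in x^\infty\,\Psi_b^{-\infty,\calE(\alpha)}(M;E)$ means the error is empty at $\lf$ and $\bbf$ but retains the index set $\calE(\alpha)_\rf$ at $\rf$ (see the sentence immediately following the theorem). No improvement at $\rf$ is claimed or possible for a right parametrix, since $P$ does not act on the right variable. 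So the missing idea is the $\lf$-removal step, and the proposed $\rf$-improvement should be deleted.
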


\noindent Note that $x^\infty\, \Psi_{\bb}^{-\infty,\calE(\alpha)}(M;E) = \calA_{\bb}^{\calE(\alpha)_{|\rf}}(M;E)$ where $\calE(\alpha)_{|\rf}$ is the index family with index sets equal to $\calE(\alpha)$ at $\rf$ and empty otherwise. Similarly, we have $\Psi_{\bb}^{-\infty,\calE(\alpha)} (M;E)\,x^\infty =  \calA_{\bb}^{\calE(\alpha)_{|\lf}}(M;E)$.
\medskip

By standard boundedness results this implies (cf. \cite[Theorem 5.60 and Prop. 5.61]{melrose1993atiyah})
the following Fredholm and regularity result.

\begin{theorem}[Fredholmness and regularity of elliptic b-operators]
\label{thm:b Fredholm regularity}\mbox{}\\
Let $P\in \Diff^m_{\bb}(M; E)$ be  b-elliptic. Then $P$ is Fredholm as a map 
$$
P:x^\alpha H^{s+m}_{\bb}(M; E) \rightarrow x^{\alpha}H^{s}_{\bb}(M; E),
$$ 
for any $\alpha \notin \Re (\specb (P))$ and any $s\in\R$. The Fredholm inverse of $P$ is in 
the full b-calculus $\Psi_{\bb}^{-m, \calE(\alpha)}(M; E)$ with $\calE(\alpha)$ as in  Theorem 
\ref{thm: b-parametrix}. Moreover, if $u \in x^\alpha H^s_{\bb}(M; E)$ for some $\alpha,s\in\R$ and 
$Pu \in \mathcal{A}^I_{\rm{phg}}(M; E)$ for some index set $I$ then $u\in \mathcal{A}^J_{\rm{phg}}(M; E)$,
where $J = I \overline\cup K$ for some index set $K>\alpha$, determined by $\specb(P)$.
\end{theorem}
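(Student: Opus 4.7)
The plan is to derive both statements from the parametrix $Q_\alpha$ constructed in Theorem \ref{thm: b-parametrix}, combined with standard mapping properties of the full b-calculus on weighted b-Sobolev spaces.

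For Fredholmness, the index bounds $\calE(\alpha)_\bbf \geq 0$, $\calE(\alpha)_\lf > \alpha$, $\calE(\alpha)_\rf > -\alpha$ are precisely the conditions making $Q_\alpha$ act boundedly as a map $x^\alpha H^s_b(M;E) \to x^\alpha H^{s+m}_b(M;E)$; this is the standard boundedness result for the full b-calculus, see \cite[Ch.\ 5]{melrose1993atiyah}. The remainders $R_{r,\alpha}$ and $R_{l,\alpha}$ are smoothing kernels, polyhomogeneous on $M^2_b$, that vanish to infinite order at one of $\rf, \lf$ and have strictly positive leading exponent at the other. The combination of infinite smoothing and strict boundary decay makes both remainders compact on $x^\alpha H^s_b$ by a standard Rellich-type argument. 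The parametrix identities $PQ_\alpha = \Id - R_{r,\alpha}$ and $Q_\alpha P = \Id - R_{l,\alpha}$ then immediately imply that $P$ is Fredholm, and the Fredholm inverse inherits the calculus structure of $Q_\alpha$ up to a finite-rank smoothing correction absorbed into $\calA_b^{\calE(\alpha)}$.

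For the regularity statement, given $u \in x^\alpha H^s_b$ with $f := Pu \in \calA_{\phg}^I(M;E)$, I apply the left parametrix identity to get
\[
u = Q_\alpha f + R_{l,\alpha} u.
\]
Since $R_{l,\alpha}$ is polyhomogeneous on $M^2_b$ with index $\calE(\alpha)_\lf$ at $\lf$ and vanishes to infinite order at $\rf$, Proposition \ref{pushforward}, applied to the left projection $M^2_b \to \overline M$ (which is a b-fibration), yields $R_{l,\alpha} u \in \calA_{\phg}^{\calE(\alpha)_\lf}(M;E)$ irrespective of the specific $u \in x^\alpha H^s_b$. For $Q_\alpha f$, I split $Q_\alpha$ into its small-calculus part and its polyhomogeneous part $Q_\alpha^{\mathrm{phg}} \in \calA_b^{\calE(\alpha)}$. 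The small part maps $\calA_\phg^I$ into $\calA_\phg^I$ by the standard action of conormal kernels on polyhomogeneous sections; for the polyhomogeneous part I write $Q_\alpha^{\mathrm{phg}} f$ as the pushforward along the left projection of the product of $Q_\alpha^{\mathrm{phg}}$ with the pullback of $f$ from the right factor, and apply Proposition \ref{pushforward}. The resulting index set at $\partial M$ is obtained by combining $I$ and $\calE(\alpha)_\lf$ via the extended union. Collecting both pieces yields $u \in \calA_{\phg}^J(M;E)$ with $J = I \,\overline\cup\, K$, where $K$ is an index set built from $\calE(\alpha)_\lf$ and hence, via \eqref{eqn:index sets}, determined entirely by $\specb(P)$ and satisfies $K > \alpha$.

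The hard step will be the careful index-set bookkeeping in the pushforward. In particular I must verify that the product kernel is truly polyhomogeneous on $M^2_b$ with the expected index family (using that the right projection is a b-fibration so pullbacks of polyhomogeneous functions are polyhomogeneous), that the index at $\rf$ satisfies the strict positivity hypothesis of Proposition \ref{pushforward} (which may first require upgrading $u$ via interior elliptic regularity so that $f$ is absorbed in a suitable weighted space), and that the extended unions correctly encode the logarithmic terms arising from resonances between the exponents in $I$ and $\calE(\alpha)_\lf$. Once these points are addressed, the remainder is a routine b-calculus composition argument.
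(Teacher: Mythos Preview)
Your proposal is correct and follows exactly the standard route the paper has in mind: the paper does not give its own proof of this theorem but simply cites \cite[Theorem 5.60 and Prop.\ 5.61]{melrose1993atiyah}, and your sketch is precisely the argument found there. One small remark: when you invoke Proposition~\ref{pushforward} for $R_{l,\alpha}u$ with $u$ only in a weighted Sobolev space, the proposition as stated requires polyhomogeneous input; the correct statement here is the (easier) fact that a kernel vanishing to infinite order at $\rf$ and $\bbf$ maps any $x^\alpha H^s_b$ into $\calA_{\phg}^{\calE(\alpha)_\lf}$, which follows directly from the rapid decay in the right variable.
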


In particular, if $u$ has only Sobolev regularity, but is mapped by a differential b-operator
to a section with an asymptotic expansion at $\partial M$, for instance if $u$ is in
the kernel of $P$, then $u$ must also have a full asymptotic expansion at $\partial M$.
\medskip

Recall that the Fredholm inverse is defined as follows: if $K= \ker P$ and $R=\Ran P$, then the Fredholm inverse of $P$ is zero on $R^\perp$ and equals $(P_{|K^\perp\to R})^{-1}$ on $R$.

\section{Review of the pseudo-differential $\phi$-calculus}\label{sec:phi-calc}

In this section we review elements of the $\phi$-calculus, 
following \cite{mazzeo1998pseudodifferential}. 
We are now in the setting  
of a compact manifold $\Mbar$ with a fibration $\phi:\partial M\to B$ of the boundary. We also fix a boundary defining function $x \geq 0$ and collar neighborhood $\Umathbar\cong [0,\eps)_x\times\dM$  of $\dM$. 

\subsection{$\phi$-vector fields and $\phi$-differential operators}

 \begin{defn}\label{Phi-vector-def}
 A b-vector field $V$ on $\Mbar$ is called a $\phi$-vector field,  $V\in\mathcal{V}_{\phi}\equiv \mathcal{V}_{\phi}(\Mbar)$, if at the boundary it is tangent to the fibres of the fibration 
 $\phi: \partial M \to B$ and if it satisfies $Vx \in x^{2}C^{\infty}(\Mbar)$ for the chosen boundary defining function $x$. Near a boundary point we use coordinates
$\{x,y_i,z_j\}$ with $y = \{y_i\}_i$
being local coordinates on the base $B$, lifted to $\partial M$ and extended to 
 $[0,\varepsilon) \times \partial M$, and $z=\{z_j\}_j$ restricting to local coordinates on the fibres $F$. Then $\calV_\phi$ is spanned, locally freely over $\Cinf(\Mbar)$, by the vector fields
 $$
 x^{2}\frac{\partial}{\partial x}, x\frac{\partial}{\partial y_{i}}, \frac{\partial}{\partial z_{j}}.
 $$
We introduce the so called $\phi$-tangent space by requiring $\mathcal{V}_{\phi}(M)$
to be its smooth sections
\begin{equation*}
C^\infty(\Mbar, {}^\phi TM) = \mathcal{V}_{\phi}
= C^\infty(\Mbar)\text{-span}\left\langle x^{2}\frac{\partial}{\partial x}, 
x\frac{\partial}{\partial y_{i}}, \frac{\partial}{\partial z_{j}}\right\rangle,
\end{equation*}
where the second equality obviously holds only locally near $\partial M$.
Note that the metric $g_\phi$ extends to a smooth positive definite quadratic form on $\phiT M$ over all of $\Mbar$.
The dual bundle $\phiT^{*} M$, the so-called $\phi$-cotangent space, satisfies
\begin{equation*}
C^\infty(\Mbar, {}^\phi T^*M) = 
C^\infty(\Mbar)\text{-span}\left\langle \frac{dx}{x^{2}}, \frac{dy_{i}}{x},dz_{j}\right\rangle.
\end{equation*}
 \end{defn}
The space of $\phi$-vector fields $\mathcal{V}_{\phi}$ is closed under brackets, hence is a Lie algebra,
and is a $C^{\infty}(\Mbar)$-module. Hence it leads to the definition of
$\phi$-differential operators $\text{Diff}_{\phi}^{*}(M; E)$, where $E$ is some fixed Hermitian vector bundle. 
Explicitly, $P \in \text{Diff}_{\phi}^{m}(M; E)$ if it is an $m$-th order differential operator 
in the open interior $M$, and has the following structure locally near
the boundary $\partial M$
\begin{equation}\label{diff-phi}
 P = \sum_{\vert \alpha \vert + \vert \beta \vert + q \leq m} 
 P_{\alpha,\beta,q}(x,y,z)(x^{2}D_{x})^{q}(xD_{y})^\beta D_{z}^{\alpha},
 \end{equation}
with coefficients $P_{\alpha,\beta,q}\in C^\infty(\overline{\U},\End(E))$ smooth up to the boundary. 
The $\phi$-symbol $\sigma_{\phi}(P)$ is then locally given
over the base point $(x,y,z) \in \overline{\U}$ by 
the homogeneous polynomial in $(\xi, \eta, \zeta) \in \R\times\R^{\dim B}\times\R^{\dim F}$
\begin{equation}\label{phi-diff}
 \sigma_{\phi}(P) (x,y,z; \xi, \eta, \zeta)= \sum_{\vert \alpha \vert + \vert \beta \vert + q = m} 
 P_{\alpha,\beta,q}(x,y,z)\xi^{q}\eta^\beta \zeta^{\alpha}.
 \end{equation}
 Invariantly this is a function (valued in $\textup{End}(E)$) if we identify $(\xi,\eta,\zeta)$ with $\xi\frac{dx}{x^2}+\eta\cdot\frac{dy}x+\zeta\cdot dz\in \phiT^{*}_{(x,y,z)}M$.
We say that $P$ is $\phi$-elliptic if $\sigma_{\phi}(P)$ is
invertible off the zero-section of $\phiT^{*}M$.
Writing $P^{m}(\phiT^{*}M; E)$ for the space of 
homogeneous polynomials of degree $k$ on the fibres of $\phiT^{*}M$ valued in $\End(E)$, the 
$\phi$-symbol map defines a short exact sequence
\begin{equation}
0 \longrightarrow \text{Diff}_{\phi}^{m-1}(M; E) \hookrightarrow \text{Diff}_{\phi}^{m}(M; E) 
\overset{\sigma_{\phi} \ }{\longrightarrow} P^{m}(\phiT^{*}M; E) \longrightarrow 0.
\end{equation}

\subsection{$\phi$-Pseudodifferential operators} \label{subsec:phi psido}
 
We now recall the notion of $\phi$-pseudo\-differential operators $\Psi^{*}_{\phi}(M; E)$ from 
Mazzeo and Melrose \cite{mazzeo1998pseudodifferential}. These will be  operators whose 
Schwartz kernels lift to polyhomogeneous  distributions with conormal singularity along 
the lifted diagonal on the $\phi$-double space $M^2_\phi$. The space $M^2_\phi$ is obtained from 
the b-double space $M^{2}_{\bb}$ by an additional blowup:
recall the definition of the (interior) fibre diagonal
\begin{align*}
&\textup{diag}_{\phi,\mathrm{int}} = \{ (p, p') \in \overline{\U} \times 
\overline{\U}: \phi(p) = \phi(p')\},
 \end{align*}
In coordinates this is the submanifold $\{x=x', y=y'\}$, so the boundary of its lift is contained in the b-face of $M^2_{\bb}$ and locally given by
\begin{align*}
&\textup{diag}_\phi := 
\partial(\beta_\bb^*(\textup{diag}_{\phi,\mathrm{int}})) = \{s=1, y = y', x'=0\}.
\end{align*}
The $\phi$-double space is now defined by
\begin{equation}\label{phi-double}
M_{\phi}^{2} := [M^{2}_\bb; \textup{diag}_\phi],\quad \beta_{\phi-\bb}:M_\phi^2\to M_{\bb}^2.
\end{equation}

\begin{figure}[h]
\centering

\begin{tikzpicture}[scale = 1]
     \draw[-](1,0) --(2,0);
     \draw[-](0,1)--(0,2);
      \draw[-](-1,-1)--(3,1);
      \draw[-](-2,0)--(2,2);
      \draw[magenta]  (1,0) arc (0:90:1);
      \draw  (-1,-1) arc (0:90:1);
       \node at (0,2.4) {$x'$};
         \node at (2.2,0) {$x$};
  \node at (0,-1) {$y-y'$};
    \node[magenta] at (1.5,1) {$y=y'$};
    
    \draw[<-] (4,1)-- node[above] {$\beta_{\phi-\bb}$} (6,1);

   \begin{scope}[shift={(8,0)}]
        \draw[-](1,0) --(2,0);
        \draw[-](0,1)--(0,2);
        \draw[-](-1,-1)--(3,1);
         \draw[-](-2,0)--(2,2);
           \draw  (-1,-1) arc (0:90:1);
         \draw (0,1).. controls (0.3,1) and (0.55,0.85) .. (0.6,0.8);
             \draw (1,0).. controls (1,0.3) and (0.85,0.55) .. (0.8,0.6);
            \draw[magenta] (0.6,0.8).. controls (0.8,1) and (1,0.8)..(0.8,0.6);
             \draw[magenta] (0.6,0.8).. controls (0.6,0.7) and (0.7,0.6)..(0.8,0.6);
           \node at (2.2,0) {$x$};
            \node at (0,2.4) {$x'$};
           \node at (2.2,0) {$x$};
           \node at (-0.3,1.5) {$\lf$};
           \node at (1.5,-0.3) {$\rf$};
           \node at (-1,0) {$\bbf$};
           \node[magenta] at (1.1,1.1) {$\phif$};

    \end{scope}
\end{tikzpicture}
\caption{$\phi$-double space $M_{\phi}^{2}$} \label{figure 2}
\end{figure}

\noindent This blowup is illustrated in Figure \ref{figure 2}, with the $y,z,z'$ coordinates omitted. 
Projective coordinates near the interior of $\phif$ can be given using \eqref{sx} by
\begin{equation}\label{SU}
T = \frac{s-1}{x'}, \ Y = \frac{y-y'}{x'}, z, \ x', \ y', \ z',
\end{equation}
where $x'$ defines $\phif$ locally 
and bf lies in the limit $|(T,Y)| \to \infty$. Here, the roles of $x$ and $x'$ can be interchanged
freely. Pullback by the blowdown map is again simply 
a change of coordinates from standard to e.g. the projective coordinates above. The total blowdown map is given by
\begin{align*}
\beta_{\phi} = \beta_\bb\circ \beta_{\phi-\bb} 
: M^{2}_{\phi} \longrightarrow \Mbar^{2}.
\end{align*}
We now define the \emph{small calculus} and the \emph{full calculus} of pseudo-differential $\phi$-operators, following
\cite{mazzeo1998pseudodifferential} and \cite{grieser2009pseudodifferential}.
 As always we identify operators with the lifts of their Schwartz kernels to $M^2_\phi$, and let operators act on half-densities.
It is convenient\footnote{This means that at $\phif$ we normalize to $\phi$-densities -- for the fibration of $\phif$ given locally by projection to the $y'$ coordinate in \eqref{SU} --  and at all other faces to b-densities. In this way kernels of $\phi$-differential operators,  and more generally operators in the small $\phi$-calculus, are conormal to the diagonal 
uniformly up to $\phif$ (when considered as sections of $\W _{\bb\phi}^\frac12(M^2_\phi)$), and operators in the small b-calculus are smooth up to $\bbf$, away from $\phif$.} 
 to normalize to the $b\phi$-density bundle
\begin{equation}
  \label{eqn:densities M2phi}
\W _{\bb\phi}(M^2_\phi) := \rho_\phif^{-(b+1)}\W _{\bb}(M^2_\phi) = 
\rho_\phif^{-2(b+1)}\beta_\phi^* \W _{\bb}(\Mbar^2).
\end{equation}
The corresponding half $b\phi$-density bundle is denoted by $\W ^{1/2}_{\bb\phi}(M^2_\phi)$.

\begin{defn}\label{phi-calculus-basic} We define small and full calculi of $\phi$-operators. 
\begin{itemize}
\item[1.] The small calculus $\Psi_\phi^m(M; E)$ of $\phi$-pseudodifferential operators is the 
space of $\W ^{1/2}_{\bb\phi}(M^2_\phi) \otimes \textup{End}(E)$-valued  
distributions on $M^2_\phi$ which are conormal with 
respect to the lifted diagonal, smoothly down to $\phif$, and vanish to infinite order at $\lf$, $\rf$ and $\bbf$.
\item[2.] The full calculus $\Psi_\phi^{m,\calE}(M; E)$ of $\phi$-pseudodifferential operators is defined as
\begin{align*}
&\Psi_\phi^{m,\calE}(M; E) := \Psi_\phi^{m}(M; E) + \calA_{\phi}^\calE(M;E), \\
&\textup{where} \ \calA_{\phi}^\calE(M;E):= \calA_{\text{phg}}^\calE(M^2_\phi, 
\W ^{1/2}_{\bb\phi}(M^2_\phi) \otimes \textup{End}(E)),
\end{align*}
if $\calE$ is an index family for $M^2_\phi$ with $\calE_{\phif}\geq 0$.
Sometimes we leave out the bundle $E$ from notation if it is clear from the context.
\end{itemize}
\end{defn}

\subsection{Fredholm theory of $\phi$-operators}\label{normal op}

The normal operator of a $\phi$-differential operator 
$P \in \text{Diff}_{\phi}^{m}(M)$ is defined as follows. Write $P$ in coordinates near the boundary as
\begin{align*}
P &= \sum_{\vert \alpha \vert + \vert \beta \vert + q \leq m} 
 P_{\alpha,\beta,q}(x,y,z)(x^{2}D_{x})^{q}(xD_{y})^\beta D_{z}^{\alpha}.
\end{align*}
Then we define
\begin{equation}
\label{eqn:normal family def}
N_{\phi}(P)_{y'} :=  
\sum_{\vert \alpha \vert + \vert \beta \vert + q \leq m} 
 P_{\alpha,\beta,q}(0,y',z) D^q_T D^\beta_Y D_{z}^{\alpha}.
\end{equation}
The normal operator $N_{\phi}(P)_{y'}$ is a family of differential operators acting on
 $\mathbb{R}\times\mathbb{R}^{b}\times F$, parametrized by $y'\in B$. The Schwartz kernel of $N_\phi(P)$ can be identified with the restriction of the Schwartz kernel of $P$ to $\phif$, using coordinates \eqref{SU}. \medskip

Note that  $N_{\phi}(P)_{y'}$ is translation invariant (constant coefficient) in $(T,Y)$.
Performing Fourier transform in $(T,Y) \in \R \times \R^b$ we define the normal family
\begin{equation}\label{normal-family}
\widehat{N}_{\phi}(P)_{y'} :=
\sum_{\vert \alpha \vert + \vert \beta \vert + q \leq m} 
 P_{\alpha,\beta,q}(0,y',z) \tau^q \xi^\beta D_{z}^{\alpha}.
\end{equation}
This is a family of differential operators on $F$, parametrized by 
$(\tau, \xi) \in \R \times \R^b$ and $y'\in B$. 
\begin{defn}[Full ellipticity] \label{full ellipticity}
An elliptic differential $\phi$-operator $P \in \text{Diff}_{\phi}^{m}(M)$ is said to be fully elliptic
if additionally the operator family $\widehat{N}_{\phi}(P)_{y'}(\tau, \xi)$
is invertible for all $(\tau, \xi; y')$, {including $(\tau, \xi) = 0$}.
\end{defn}

We can now state Fredholm results for fully elliptic $\phi$-differential 
operators, due to \cite{mazzeo1998pseudodifferential}. 
 
\begin{Th}[Invertibility up to smooth kernel]\cite{mazzeo1998pseudodifferential}\,(Proposition 8).
\label{invphi} 
If $ P \in\textup{Diff}^{m}_{\phi}(M; E)$ is fully elliptic
in the sense of Definition \ref{full ellipticity}, then there exists a small calculus parametrix $Q \in \Psi_{\phi}^{-m}(M;E)$
satisfying
\begin{align*}
P  Q - \textup{Id},\ Q  P - \textup{Id} \in  x^\infty\,\Psi_\phi^{-\infty}(M;E) = \calA_\phi^\varnothing\,.
\end{align*}
\end{Th}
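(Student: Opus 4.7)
My plan is to build $Q$ by a two-step symbolic construction. First, an interior symbolic parametrix removes the conormal singularity along the lifted diagonal using $\phi$-ellipticity; then a Taylor-series iteration at the front face $\phi f$ removes the Schwartz kernel there using full ellipticity. Asymptotic summation at each stage produces a right parametrix with remainder in $\calA_\phi^\varnothing = x^\infty\,\Psi_\phi^{-\infty}(M;E)$, and a symmetric argument gives a left parametrix that must coincide with it modulo $\calA_\phi^\varnothing$.

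\textbf{Interior symbolic step.} Since $P$ is $\phi$-elliptic, $\sigma_\phi(P)$ is invertible off the zero section of ${}^\phi T^*M$. Extending the $\phi$-symbol to a short exact sequence on the small calculus, I choose $Q_0\in\Psi_\phi^{-m}(M;E)$ with $\sigma_\phi(Q_0)=\sigma_\phi(P)^{-1}$; then $PQ_0 = \mathrm{Id} - R_1$ with $R_1\in\Psi_\phi^{-1}(M;E)$. Iterating via the partial sums $Q_0(\mathrm{Id}+R_1+\cdots+R_1^N)$ and asymptotically summing in the symbolic order, I obtain $Q'\in\Psi_\phi^{-m}(M;E)$ with $PQ' = \mathrm{Id}-R$, $R\in\Psi_\phi^{-\infty}(M;E)$. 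Thus $R$ is smooth across the diagonal and vanishes to infinite order at $\mathrm{lb}$, $\mathrm{rb}$, $\mathrm{bf}$ by the definition of the small calculus; only the front face $\phi f$ remains to be addressed.

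\textbf{Front-face step.} An element of $\Psi_\phi^{-\infty}(M;E)$ is determined modulo $x\,\Psi_\phi^{-\infty}(M;E)$ by its normal operator at $\phi f$. The composition law in the small calculus implies $N_\phi(PS)_{y'} = N_\phi(P)_{y'}\,N_\phi(S)_{y'}$, so I seek $S_1\in\Psi_\phi^{-\infty}(M;E)$ with $N_\phi(P)_{y'}\,N_\phi(S_1)_{y'} = N_\phi(R)_{y'}$. Fourier-transforming in the translation-invariant variables $(T,Y)$ reduces this to the pointwise equation
\begin{equation*}
\widehat{N}_\phi(P)_{y'}(\tau,\xi)\,\widehat{N}_\phi(S_1)_{y'}(\tau,\xi) = \widehat{N}_\phi(R)_{y'}(\tau,\xi)
\end{equation*}
on $F$, which is uniquely solvable because full ellipticity makes $\widehat{N}_\phi(P)_{y'}(\tau,\xi)$ pointwise invertible. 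Setting $Q'' := Q' + S_1$, the remainder $PQ''-\mathrm{Id} = -(R - PS_1)$ vanishes to one order higher at $\phi f$. Iterating and summing asymptotically in powers of $\rho_{\phi f}$ produces $Q_r\in\Psi_\phi^{-m}(M;E)$ with $PQ_r-\mathrm{Id}\in x^\infty\Psi_\phi^{-\infty}(M;E)$.

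\textbf{Main obstacle and conclusion.} The analytically most delicate point is to verify that the family $(\tau,\xi;y')\mapsto \widehat{N}_\phi(P)_{y'}(\tau,\xi)^{-1}$ has the parameter-dependent symbolic behavior required so that its inverse Fourier transform is the normal-operator kernel of an honest element of $\Psi_\phi^{-\infty}(M;E)$. This combines $\phi$-ellipticity (controlling large $(\tau,\xi)$ via the inverse symbol) with fibrewise invertibility (for bounded $(\tau,\xi)$) in a Seeley-type construction for parameter-dependent inverses on the fibre $F$. Once this is in hand, a symmetric construction of a left parametrix $Q_\ell\in\Psi_\phi^{-m}(M;E)$, together with the standard identity $Q_\ell \equiv Q_\ell(PQ_r) \equiv (Q_\ell P)Q_r \equiv Q_r \pmod{\calA_\phi^\varnothing}$, shows $Q_\ell \equiv Q_r$ modulo $\calA_\phi^\varnothing$ and yields the desired two-sided parametrix $Q$.
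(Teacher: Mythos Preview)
The paper does not give its own proof of this statement; it is simply quoted from \cite{mazzeo1998pseudodifferential}, Proposition~8. Your proposal is correct and is essentially the standard argument of that reference: first a symbolic parametrix using $\phi$-ellipticity to kill the diagonal singularity, then an iteration at $\phi f$ using invertibility of the normal family to remove the remaining Taylor series there, followed by asymptotic summation and the usual left-equals-right argument. One small remark: in the front-face step you do not actually need the full parameter-dependent symbolic analysis of $\widehat{N}_\phi(P)^{-1}$ that you flag as the main obstacle, since you only compose it with $\widehat{N}_\phi(R)(\tau,\xi)$, which is already a Schwartz family of smoothing operators on $F$; boundedness of the inverse together with Schwartz decay of $\widehat{N}_\phi(R)$ suffices to land in the suspended smoothing calculus.
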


In order to state continuity and Fredholm results we introduce weighted $\phi$-Sobolev spaces. We write 
for any $m, \ell \in \R$
\begin{align*}
&x^{\ell}H^{m}_{\phi}(M;E) := \{ u =  x^{\ell} \cdot v \mid \forall \, P \in \Psi_{\phi}^{m}(M;E): \ 
Pv \in L^{2}(M;E) \}.
\end{align*}
Here as before, we define $L^2(M;E) \equiv L^2(M; E; \dvol_{\bb})$ with respect to the b-density $\dvol_{\bb}$. However, as derivatives in the definition of $H^m_\phi(M;E)$, e.g. for $m\in\N$, we use $\phi$-derivatives.

\begin{Th} \cite[Proposition 9, 10]{mazzeo1998pseudodifferential}.
Let $P\in \Diff^m_\phi(M; E)$ be  fully elliptic. Then for any $\alpha, s\in\R$, $P$ is Fredholm as a map 
$$
P: x^\alpha H^{s+m}_\phi(M; E) \rightarrow x^{\alpha}H^{s}_\phi(M; E).
$$ 
The Fredholm inverse lies in $\Psi_{\phi}^{-m}(M;E)$.
 \end{Th}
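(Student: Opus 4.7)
\medskip

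The plan is to derive Fredholmness and the structure of the inverse from the small-calculus parametrix supplied by Theorem \ref{invphi}. First I would establish two continuity facts that are standard in this type of geometric calculus but need to be checked: (i) every $A\in\Psi_\phi^{m'}(M;E)$ induces a bounded map $x^\alpha H^{s+m'}_\phi(M;E)\to x^\alpha H^s_\phi(M;E)$ for all $\alpha,s\in\R$, and (ii) every $R\in x^\infty\Psi_\phi^{-\infty}(M;E)=\calA_\phi^\varnothing$ is compact on these spaces. Statement (i) follows from the fact that the small $\phi$-calculus is closed under composition together with a standard Hörmander square-root trick reducing boundedness of arbitrary $A$ to boundedness of the zeroth-order case, while the weighted statement reduces to the unweighted one by conjugation with $x^\alpha$ (which preserves the calculus). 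Statement (ii) follows because the Schwartz kernel of such an $R$ vanishes to infinite order at $\lf$, $\rf$ and $\bbf$ and is smooth on $M^2_\phi$, so $R$ factors as a composition $x^\alpha H^s_\phi \to x^N H^N_\phi \to x^\alpha H^s_\phi$ for arbitrarily large $N$, the last inclusion being compact by a Rellich-type embedding adapted to the $\phi$-structure.

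Given these, Theorem \ref{invphi} yields $Q\in\Psi_\phi^{-m}(M;E)$ with $PQ=\Id-R_r$, $QP=\Id-R_\ell$ and $R_r,R_\ell\in\calA_\phi^\varnothing$. By (i) and (ii) the operator $Q$ is a continuous left/right inverse modulo compact operators between the relevant weighted $\phi$-Sobolev spaces, so Atkinson's theorem gives that $P$ is Fredholm as a map $x^\alpha H^{s+m}_\phi(M;E)\to x^\alpha H^s_\phi(M;E)$. Independence of $\ker P$ and $\coker P$ of $(\alpha,s)$ will come as a consequence of the elliptic regularity step below.

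The next step is to upgrade the parametrix to a Fredholm inverse inside the small calculus. The key elliptic regularity statement is that $\ker P\subset \dot C^\infty(M;E)$, i.e.\ kernel elements are smooth and vanish to infinite order at $\dM$. If $u\in x^\alpha H^{s+m}_\phi$ with $Pu=0$, then $u=(QP)u+R_\ell u=R_\ell u\in\calA_\phi^\varnothing\cdot u$, which (by iterating, using that $\calA_\phi^\varnothing$ maps into arbitrarily regular and rapidly decaying spaces) forces $u\in\dot C^\infty(M;E)$; the same argument applied to the formal adjoint $P^*$, which is again fully elliptic in $\Diff_\phi^m$, gives $\ker P^*\subset \dot C^\infty$. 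Consequently the $L^2$-orthogonal projections $\Pi_{\ker P}$ and $\Pi_{\ker P^*}$ have Schwartz kernels that are smooth on $\Mbar^2$ and vanish to infinite order at both boundary factors, hence lift to elements of $\calA_\phi^\varnothing=x^\infty\Psi_\phi^{-\infty}(M;E)$. Then $\widetilde P:=P+\Pi_{\ker P^*}$ (or the standard stabilization) is invertible on $L^2$ and the Neumann series argument, applied to $Q\widetilde P-\Id\in\calA_\phi^\varnothing$, produces an exact inverse $G\in\Psi_\phi^{-m}(M;E)$ of $\widetilde P$; subtracting off the finite-rank smoothing projector recovers the Fredholm inverse of $P$ as an element of $\Psi_\phi^{-m}(M;E)$.

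The main obstacle I foresee is the elliptic regularity step: proving that kernel elements in a weighted Sobolev space are in fact rapidly decaying. This is where full ellipticity (Definition \ref{full ellipticity}) enters in an essential way, via the small-calculus parametrix of Theorem \ref{invphi} whose remainders vanish to infinite order at \emph{all} boundary faces, including $\bbf$. Without full ellipticity one only gets a $b$-type parametrix with residual decay controlled by $\specb$, and the above bootstrap would stall; it is precisely the invertibility of the normal family $\widehat N_\phi(P)_{y'}(\tau,\xi)$ that allows the residues to land in $\calA_\phi^\varnothing$ and thus yields both the compactness of the remainders and the $\dot C^\infty$-regularity of the kernel.
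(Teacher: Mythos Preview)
Your argument is correct and is essentially the standard one: parametrix from Theorem \ref{invphi}, boundedness and compactness of small-calculus and residual operators on weighted $\phi$-Sobolev spaces, Atkinson's theorem, then elliptic regularity ($\ker P,\ker P^*\subset\dot C^\infty$) to show the finite-rank projectors lie in $\calA_\phi^\varnothing$ and hence the Fredholm inverse stays in $\Psi_\phi^{-m}$. Note, however, that the paper does not give its own proof of this statement --- it is simply quoted from \cite[Propositions 9, 10]{mazzeo1998pseudodifferential} --- so there is nothing to compare against; your write-up is precisely the argument one finds in Mazzeo--Melrose.
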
 

We point out that this theorem is simpler than the corresponding  Theorem \ref{thm:b Fredholm regularity}  for b-operators: no weights $\alpha$ are excluded, and the Fredholm inverse lies in the same calculus (not a 'large' calculus).
This is due to the fact that \emph{full ellipticity} is a very strong condition: e.g.
for the Laplace-Beltrami operator on a $\phi$-manifold, twisted with some flat vector bundle $E$, full ellipticity would require
twisted zero-cohomology $H^0(F,E\restriction F)$ on fibers to vanish.

\section{Hodge Laplacian for $\phi$-metrics, split parametrix construction}\label{sec:split}

In this section we analyze the structure of the Hodge Laplacian for a $\phi$-metric and exhibit its 'split' structure with respect to fibre harmonic and perpendicular forms.
We define a pseudodifferential calculus which reflects this structure, carry out the parametrix construction (building on and extending \cite{grieser-para}) and show that the Fredholm inverse of the Hodge Laplacian lies in this calculus. We continue in the setting 
of a compact manifold $\Mbar$ with fibred boundary $\partial M$, with a choice of boundary defining function $x \geq 0$ and collar neighborhood $\overline{\Umath}=[0,\eps)_x\times\dM$. In this section we always consider 
$E= \Lambda_\phi M:=\Lambda {}^\phi T^*M$ and often omit the vector bundle from the notation.

\subsection{Structure of the Hodge Laplacian $\Delta_{\phi}$}\label{subsec:str hodge}
Recall the definition of a Riemannian submersion $\phi: (\partial M, g_\dM) 
\to (B, g_B)$:  we split the tangent bundle $T\partial M$ 
into subbundles $\calV\oplus T^H \partial M$, where at any $p\in \partial M$ 
the vertical subspace $\calV_p$ is the tangent 
space to the fibre of $\phi$ through $p$, and the horizontal subspace $T^H_p \partial M$ is its orthogonal complement with respect to $g_\dM$. 
Then $\phi$ is a Riemannian submersion if the restriction of the differential $d\phi:T^H\dM \to TB$ is an isometry. In this case, one can write $g_\dM=g_F + \phi^*g_B$ where $g_{F}$ equals $g_{\dM}$ on $\calV$ and  vanishes on  $T^H \partial M$. 
We also write $\phi^*TB$ for  $T^H \partial M$. \medskip

With respect to the fibred boundary metric $\phi$ we obtain on the collar neighborhood $\Umath$
\begin{equation}\label{splitting1}
 {}^\phi T\U := \phiT  M\restriction_{\overline{\Umath}} \ = 
 \textup{span} \left\{x^2\partial_x\right\} \oplus x \phi^{*}T B \oplus \calV.
\end{equation}
This splitting induces an orthogonal splitting of the
$\phi-$cotangent bundle ${}^\phi T^*M$:
\begin{equation}\label{splitting2}
{}^\phi T^*\U := 
{}^\phi T^*M \restriction_{\overline{\Umath}} \ = \textup{span} \left\{ \frac{dx}{x^2}\right\} \oplus  x^{-1} \phi^*T^*B \oplus \calV^*,
\end{equation}
where $\calV^*$ is the dual of $\calV$, see \eqref{eqn:V T*F identif}.
Recall that  the $\phi$-tangent bundle ${}^\phi TM$ is spanned locally over $\overline{\Umath}$ 
by $x^2\partial_x$, $x\partial_{y_i}$, $\partial_{z_j}$. In terms of this basis the metric $g_\phi$ takes the form
\begin{equation}
 \label{eqn:phi metric}
 g_\phi = 
\begin{pmatrix}
 1 & 0 & 0 \\
 0 & A_{00}(y) & x A_{01}(y,z) \\
 0 & x A_{10}(y,z) & A_{11}(y,z) 
\end{pmatrix}
+ O(x^3)
\end{equation}
with $A_{ij}$ smooth and $A_{00}$ not depending on $z$ because of the Riemannian submersion condition, Assumption \ref{assum2} on $g_0$. 
The vanishing order $O(x^3)$ of the higher order terms is due to Assumption \ref{assum1}.
Note that in local coordinates $\calV^*$ is usually not spanned by the $dz_j$ except at $x=0$, unless the off-diagonal terms in \eqref{eqn:phi metric} vanish. \medskip

With respect to the corresponding decomposition of $\Lambda {}^\phi T^*M$ over $\Umath$ 
and assuming that the higher order term $h\equiv 0$ is trivially zero for the moment, we compute as in Hausel, Hunsicker and Mazzeo \cite[\S 5.3.2]{gravi ins} for the Hodge Dirac operator $D_\phi$ over the collar neighborhood $\Umath$
\begin{equation}\label{hodge de rham}
D_\phi = x^{2}D_{x} + x \mathbb{A} + D_{F} + xD_{B} 
-x^{2}\mathscr{R},
\end{equation}
where we set $\mathbb{A} = A + A^{*}$, $D_{F} = d_{F} + d_{F}^{*}$, $\mathscr{R} = R + R^{*}$ and
\begin{equation}\label{DB}
D_{B} =  \left( d_{B} - \mathbb{I} \right) +  \left( d_{B} - \mathbb{I} \right)^*. 
\end{equation} 
Here, $\mathbb{I}$ and $R$ are the second fundamental form and the curvature of 
the Riemannian submersion $\phi$, respectively; $d_B$ is the sum of the lift of the exterior derivative on $B$ to $\partial M$
plus the action of the derivative in the $B$-direction on 
the $\calV^*-$components of the form. The term $x^{2}D_{x}$
acts for any $\w  \in \Lambda^\ell \left( x^{-1} \phi^*T^*B\right) \oplus \Lambda \calV^*$ as follows
$$
\left( x^{2}D_{x}\right)  \w  = \frac{dx}{x^2} \wedge \left( x^{2}\partial_{x} \right) \w , \quad 
x^{2}D_{x} \left( \frac{dx}{x^2} \wedge \w  \right) = -\left( x^{2}\partial_{x} \right) \w .
$$
To be precise, $x^2\partial_x$ is the lift of the corresponding differential operator
on $(0,\varepsilon)$ under the projection $\pi: (0,\varepsilon) \times 
\partial M \to (0,\varepsilon)$. Finally, $A$ is a $0-$th order differential operator, acting for any
$\w  \in \Lambda^\ell \left( x^{-1} \phi^*T^*B\right) \oplus \Lambda \calV^*$ by
$$
A \w  = -\ell \cdot  \frac{dx}{x^2} \wedge \w , \quad 
A \left( \frac{dx}{x^2} \wedge \w  \right) = (b-\ell) \cdot \w .
$$
We can now take the square of $D_\phi$ to compute the Hodge Laplacian
\begin{equation}\label{hodge laplace}\begin{split}
\Delta_{\phi} = D^2_\phi = (x^{2}D_{x})^2 + D^2_{F} + x^2D^2_{B}
+Q, \ Q \in x\cdot \text{Diff}^{^2}_{\phi}(M).
\end{split}\end{equation}
Since $D^2_B$ equals $\Delta_B$ plus terms in $x\cdot \text{Diff}^{^2}_{\phi}(M)$, we conclude
for the normal operator and normal family, defined in \eqref{eqn:normal family def}, \eqref{normal-family}
 \begin{align}
 \label{eqn:normal op Delta}
N_\phi(\Delta_{\phi})_{y} &= \Delta_{T,Y} + \Delta_{F_y}
 \\
 \label{NP}
 \widehat{N}_{\phi}(\Delta_{\phi})_{y} &= \tau^2 + \vert \xi \vert^{2}
  + \Delta_{F_y} \in \text{Diff}^{2}(F_y).
   \end{align}
Here $\Delta_{T,Y}$ is the Euclidean Laplacian on $\R^{b+1}\cong\R\times T_yB$, where the scalar product on $T_yB$ as well as the norm $ \vert \xi\vert$  for $\xi\in T^*_{y}B$ are defined by $g_B$. Also, $\Delta_{F_y} \equiv D^2_{F_y}$.
Thus the normal family of the Hodge Laplacian is invertible for each $(\tau,\xi)$ (including $(\tau,\xi)=0$), 
and hence $\Delta_{\phi}$ is fully elliptic, only if $\text{ker}(\Delta_{F_y})  = \{0\}$. Note that we do not
impose this restriction in this paper and hence require additional methods for the low energy resolvent construction.

\begin{Rem}
In case the higher order term $h$ in \eqref{g-phi-def}
is non-trivial, additional terms of the form $x\cdot \text{Diff}^{^2}_{\phi}(M)$
appear, which do not contribute to the normal operator. They do, however, 
contribute non-trivially to the split structure of $\Delta_{\phi}$ in the next section,
unless Assumption \ref{assum1} is imposed.
\end{Rem}

\subsection{Splitting into fibrewise harmonic and perpendicular forms}

The explicit form of $\widehat{N}_\phi(\Delta_{\phi})$ in \eqref{NP} shows that the 
Hodge Laplacian is not fully elliptic, unless $\Delta_{F_y}$ is invertible for one and hence each $y\in B$. Thus,
near $\partial M$ we split the differential forms into forms which are fibrewise harmonic and 
the perpendicular bundle. In the former, $\Delta_{\phi}$
acts as a scattering operator, in the latter $\Delta_{\phi}$ is fully elliptic, in a sense made precise below. 
\medskip

To be precise, let us write $V:= [0,\eps)\times B$. 
Then the $\phi-$cotangent bundle (with trivial fibre) reduces over $V$ to the
\emph{scattering} cotangent bundle, cf. \eqref{splitting2}
\begin{equation}
\label{eqn:phiT* decomp2} 
\scT^*V = \textup{span} \left\{ \frac{dx}{x^2}\right\} \oplus  x^{-1} T^*B.
\end{equation}
Consequently we find from \eqref{splitting2} 
\begin{equation}
 \label{eqn:phiT* decomp}
{}^\phi T^*\U = \phi^* \left(\scT^*V\right) \oplus \calV^*\,.
\end{equation}
If $p\in \overline{\Umath}$ and $F$ is the fibre through $p$, then pull-back under the inclusion of $F$ into $\overline{\Umath}$  is a map $(\phiT^*\U)_p\to T_p^*F$, which restricts to an isomorphism $\calV^*_p\to T^*_pF$, and this is an isometry for $g_0^*$ and $g_F^*$ since $\phi$ is a Riemannian submersion. Hence we have the isometry
\begin{equation} \label{eqn:V T*F identif}
\calV^* \cong T^*F.
\end{equation}
The decomposition \eqref{eqn:phiT* decomp} induces a decomposition
for the exterior algebras
\begin{equation}
 \label{eqn:Lambda decomp}
 \Lambda\, \phiT^*\U = 
 \Lambda \, \phi^*\left(\scT^*V\right) \otimes \Lambda \calV^*.
\end{equation}
Together with \eqref{eqn:phiT* decomp2}, this allows us to write
$$ \Cinf(\Umathbar,\Lambda \phiT^*\U) = \Cinf(V, \Lambda\scT^*V  \otimes \Cinf(F,\Lambda T^*F)),$$
where $\Cinf(F,\Lambda T^*F)$ is considered as a bundle over $B$, the fibre over $y\in B$ being $\Cinf(F_y,\Lambda T^*F_y)$. 
We decompose this bundle as 
\begin{equation}
 \label{eqn:Cinf F decomp}
\Cinf(F,\Lambda T^*F) = \widetilde{\mathscr{H}}\oplus\widetilde{\Cmath},
\end{equation}
where for each $y\in B$ the space $\widetilde{\mathscr{H}}_y$ is the kernel of the Hodge Laplacian $\Delta_{F_y}$ on the fibre $(F_y,g_F(y))$, and $\widetilde{\Cmath}_y$ is its orthogonal complement with respect to the $L^2$-scalar product. Note that $\dim\widetilde{\mathscr{H}}_y=\rank H^*(F_y)$ is finite and independent of $y \in B$ by the Hodge theorem.
It is a classical result \cite[Corollary 9.11]{heatdirac} that $\widetilde{\mathscr{H}}$ is a smooth vector bundle over $B$.
For each $(x,y)\in V$ let 
$$
\mathscr{H}_{(x,y)} =  \Lambda(\scT^* V)_{(x,y)}\otimes\widetilde{\mathscr{H}}_y, 
\quad \Cmath_{(x,y)}= \Lambda(\scT^* V)_{(x,y)}\otimes\widetilde{\Cmath}_y.
$$
We define a smooth section of $\mathscr{H}$ to be a smooth differential form on $\overline{\Umath}$ whose restriction to the fibre over $(x,y)\in V$ lies in $\mathscr{H}_{(x,y)}$ for each $(x,y)$, and similarly for $\Cmath$.  Summarizing, we get 
\begin{equation}
 \label{eqn:Cinf decomp}
 \Cinf(\Umathbar,\Lambda \phiT^*\U) = \Cinf(V, \mathscr{H}) \oplus \Cinf(V,\Cmath)\,.
\end{equation}
Corresponding to this decomposition we have projections
\begin{equation}\label{projections-def}
\begin{split}
&\Pi :  \Cinf(\Umathbar,\Lambda \phiT^*\U) \to \Cinf(V, \mathscr{H}), \\
&\Pi^{\perp} := \text{Id} - \Pi :  \Cinf(\Umathbar,\Lambda \phiT^*\U) \to \Cinf(V,\Cmath),
\end{split} 
\end{equation}
These maps are defined fibrewise from the projections of  $\Cinf(F_y,\Lambda T^*F_y)$ to $\widetilde{\mathscr{H}}_y$ and to $\widetilde{\Cmath}_y$, respectively, for each $y \in B$.

\subsection{Split structure of the Hodge Laplacian} \label{subsec:split hodge}

We now turn to the structure of the Hodge Laplacian with respect to the decomposition above.
Recall from \eqref{hodge de rham} that the Hodge Dirac operator $D_\phi = d + d^{*}$ over $\U$
(still with respect to $g_0$, setting $h\equiv 0$ for the moment)
takes with respect to the decomposition \eqref{eqn:Lambda decomp} the following form
\begin{equation}\label{hodge de rham2}
D_\phi = x^{2}D_{x} + x \mathbb{A} + D_{F} + xD_{B} 
-x^{2}\mathscr{R}.
\end{equation}
We consider now as in \cite{gravi ins}
the decomposition of $D_\phi$ with respect to the 
decomposition \eqref{eqn:Cinf decomp}, which leads to the following 
matrix representation 
\begin{equation}\label{hodge de rham matrix}
\begin{split}
D_\phi &=
\begin{pmatrix}
(D_\phi)_{00} & (D_\phi)_{01}\\
(D_\phi)_{10} & (D_\phi)_{11}
\end{pmatrix} \\ &:= 
\begin{pmatrix}
\Pi D_\phi \Pi & \Pi D_\phi \Pi^{\bot}\\
\Pi^{\bot} D_\phi \Pi & \Pi^{\bot} D_\phi \Pi^{\bot}
\end{pmatrix}
: 
\begin{matrix}                            
\Gamma(V,\mathscr{H}) \\  \oplus \\ \Gamma(V,\Cmath)                                                                                                                            
\end{matrix}
 \longrightarrow
\begin{matrix}                            
\Gamma(V,\mathscr{H}) \\  \oplus \\ \Gamma(V,\Cmath).                                                                                                                          
\end{matrix}
\end{split}
\end{equation}
The individual terms in the matrix \eqref{hodge de rham matrix} are given as follows. 
We define the operator $\eth = \Pi(d_{B} - \mathds{I})\Pi$,
which in \cite[Proposition 15]{gravi ins} is shown to act as a differential, i.e. $\eth^{2} =0$ and $(\eth^{*})^{2} = 0.$ 
Then we find from \eqref{hodge de rham2}
\begin{align}
&(D_\phi)_{00} = 
x^2 D_{x} + x(\eth + \eth^{*}) + x \mathbb{A} - x^{2} \Pi \mathscr{R}\Pi,\\
& (D_\phi)_{11}  = 
x^{2}D_{x} + x \Pi^{\bot}D_{B}\Pi^{\bot}  + x \mathbb{A} - x^{2}\Pi^{\bot} \mathscr{R} \Pi^{\bot} + \Pi^{\bot} D_F \Pi^{\bot},\\\label{D_{01}}
&(D_\phi)_{01} = 
\Pi\left(xD_{B} - x^{2} \mathscr{R}\right) \Pi^{\bot},\\ \label{D_{10}}
&(D_\phi)_{10}  = 
\Pi^{\bot}\left(xD_{B} - x^{2} \mathscr{R}\right) \Pi.
\end{align}
Note that in \eqref{D_{01}} and \eqref{D_{10}} we used that 
$x^{2}D_{x}$ and $\mathbb{A}$ commute with $\Pi$.
We can express the matrix in \eqref{hodge de rham matrix} as follows
\begin{equation}\label{hodge de rham matrix 2}
D_\phi =
\begin{pmatrix}
(D_\phi)_{00} & (D_\phi)_{01}\\
(D_\phi)_{10} & (D_\phi)_{11}
\end{pmatrix}
= \begin{pmatrix}
xA_{00} & xA_{01}\\
xA_{10} & A_{11}
\end{pmatrix},
\end{equation}
where the individual entries are given by
\begin{equation} \label{off diag}
\begin{split}
&A_{00} = xD_{x} + (\eth + \eth^{*}) + \mathbb{A} - x\Pi \mathscr{R}\Pi,\,
\quad A_{01} = \Pi\left( D_{B} - x\mathscr{R} \right)\Pi^{\bot},\\ 
&A_{10} = \Pi^{\bot}\left( D_{B} - x\mathscr{R}\right)\Pi, \,
\qquad \qquad \qquad \  A_{11} = (D_\phi)_{11}. 
\end{split}\end{equation}
We point out that $A_{00}$ acts as an elliptic differential b-operator on sections of $\mathscr{H}$.
Similarly, $A_{11}$ acts as a $\phi$-operator and $A_{01}, A_{10}$ 
as b-operators. However, below it is useful to think of the latter as $\phi$-operators.
\medskip

\noindent For the Hodge Laplacian $\Delta_{\phi} = (D_\phi)^{2}$
(with respect to $g_0$ in \eqref{g-phi-def} with $h\equiv 0$) one computes from \eqref{hodge de rham matrix 2}
that
\begin{equation}\label{hodge laplace matrix }
\Delta_{\phi}  = 
\begin{pmatrix}
(\Delta_{\phi})_{00} & (\Delta_{\phi})_{01}\\
(\Delta_{\phi})_{10} & (\Delta_{\phi})_{11}
\end{pmatrix},
\end{equation}
where the individual entries are given in terms of \eqref{off diag} by
\begin{align*}
(\Delta_{\phi})_{00} &= (xA_{00})^{2} + (xA_{01})(xA_{10}),\\
(\Delta_{\phi})_{01} &= (xA_{00})(xA_{01})+ xA_{01}A_{11},\\
(\Delta_{\phi})_{10} &= (xA_{10})(xA_{00})+ A_{11}(xA_{10}),\\
(\Delta_{\phi})_{11} &= (xA_{10})(xA_{01}) + A_{11}^2.
\end{align*}

\subsubsection{Unitary transformation of $\Delta_\phi$ to an operator in $L^2(M; \dvol_{\bb})$}

The Hodge Laplacian $\Delta_{\phi}$ is identified with its unique self-adjoint extension in 
$L^2(M; \dvol_\phi)$, where $\dvol_\phi$ is the volume form induced by $g_\phi$.
It is convenient to transform $\Delta_{\phi}$ 
to a self-adjoint operator in $L^2(M; \dvol_{\bb})$, 
where
\begin{equation}\label{dvb}
\dvol_{\bb} = x^{b+1} \dvol_\phi.
\end{equation}
We can pass between $L^2(M; \dvol_{\bb})$ and $L^2(M; \dvol_\phi)$ by an isometry
\begin{equation}\label{T}
W: L^2(M; \dvol_\phi) \to L^2(M; \dvol_{\bb}), \quad \w  \mapsto x^{-\frac{b+1}{2}} \w .
\end{equation}
We use that isometry to define the operator 
\begin{equation*}
\square_\phi := W \circ \Delta_{\phi} \circ W^{-1} \equiv x^{-\frac{b+1}{2}} \Delta_{\phi} x^{\frac{b+1}{2}},
\end{equation*} 
which is self-adjoint in $L^2(M; \dvol_{\bb})$ instead of $L^2(M; \dvol_\phi)$.
Below, we will deal only with $\square_\phi$, which is unitarily equivalent to the
Hodge Laplacian. \medskip

Recall that we impose Assumption \ref{assum4} so that $[D_B, \Pi] = 0$. 
This condition implies that $\Pi^{\bot}\left( D_{B}\right)\Pi = 0$ and $\Pi\left( D_{B}\right)\Pi^{\bot} = 0$, hence $xA_{01}=-x^2 \Pi
\mathscr{R}\Pi^\perp$ and $xA_{10}=-x^2\Pi^\perp\mathscr{R}\Pi$ are $x^2$ times zero-order operators.
This implies that $\square_{\phi} = x^{-\frac{b+1}2} \Delta_{\phi}x^{\frac{b+1}2}$ can be written
\begin{equation}
\label{eqn:square_phi-prelim}
\square_\phi = 
\begin{pmatrix}
xP_{00}x  & xP_{01}x \\
xP_{10}x & P_{11}
\end{pmatrix}.
\end{equation}
where $P_{00}\in\Diff^2_{\bb}(M)$, $P_{11}\in \Diff^2_\phi(M)$
and $P_{01}, P_{10}\in \Diff^2_\phi(M)$ (in fact, $P_{01}, P_{10}\in \Diff^1_\phi(M)$ in the special case where the metric perturbation 
$h\equiv 0$),
with each $P_{ij}$ sandwiched by appropriate $\Pi$ and $\Pi^\perp$ factors.
We call \eqref{eqn:square_phi-prelim} the \textit{split structure} of $\square_\phi$.
\medskip

All of the computations above are done for $h\equiv 0$ a priori. Now, for 
a non-trivial higher order term $h$, the Assumption \ref{assum1} 
with a stronger decay of $|h|_{g_0} = O(x^3)$, guarantees that
\eqref{eqn:square_phi-prelim} still holds with $O(x^3)$ contributions in each 
component. Since e.g. $xP_{00}x + O(x^3) = x(P_{00}+O(x))x$, 
we have only higher order contributions to all $P_{ij}$.

\subsubsection{Normal operator of $\square_\phi$ under the splitting}
Here we in particular explain in what sense $P_{11}\in \Diff^2_\phi(M)$ is fully
elliptic. The normal operator of $\square_\phi$ has the following form 
under the splitting above (cf. \eqref{eqn:normal op Delta}) 
\begin{equation}
 \label{eqn:square phi normal op}
 N(\square_\phi)_y = 
 \begin{pmatrix}
N(xP_{00}x)_y & 0 \\
 0 & N(P_{11})_y
\end{pmatrix}
=
\begin{pmatrix}
 \Delta_{\R^{b+1}} \otimes \textup{Id}_{\widetilde{\mathscr{H}}_y} & 0 \\
 0 & \Delta_{\R^{b+1}} \oplus (\Delta_{F_y})_{|\widetilde{\Cmath}_y}
\end{pmatrix}
\end{equation}
Note that the lower right corner is invertible as a map on sections of $\Cmath$, with inverse a convolution operator in the $(T,Y)$-variables on $\R^{b+1}$, rapidly decaying as $|(T,Y)|\to\infty$ (the class of operators with this property is called the \textit{suspended calculus}). This can be shown by using a spectral decomposition of $\Delta_F$, or directly as in \cite[Proposition 2.3]{grieser-para}.
In this sense $P_{11}$ is fully elliptic on sections of $\Cmath$. \medskip

The upper left corner of \eqref{eqn:square phi normal op} behaves differently:
If $b\geq2$ then the standard fundamental solution defines a bounded operator
(acting by convolution)
\begin{equation}
 \label{eqn:inverse Delta}
  \Delta_{\R^{b+1}}^{-1} = c|(T,Y)|^{1-b}*  : x^{\gamma+2} 
  L^2(\R^{b+1},\dvol_{\bb}) \to x^{\gamma} H^2_{\bb}(\R^{b+1},\dvol_{\bb}), 
\end{equation}
if $\gamma\in(0,b-1)$, where $x>0$ is smooth and equals $\frac1r$ for large values of the
radial function $r$ on $\R^{b+1}$. Here, $c$ is a dimensional constant. For $b<2$ there are no values $\gamma$ for 
which $\Delta_{\R^{b+1}}$ is invertible between these spaces. See for example 
\cite[Proposition A.2]{AlbRocShe:RHKTDFC}.
In any case, the convolution only decays polynomially at infinity.

\subsubsection*{Structure of $P_{00}$}
On $V = [0,\eps)\times B$, using \eqref{eqn:phiT* decomp2} we may write any $\calH$-valued form as a sum
$u = u_1 + \frac{dx}{x^2}\wedge u_2$ where $u_1$, $u_2$ are sections of $\Lambda^\ell (x^{-1} \phi^*T^*B) \otimes 
\mathscr{H}$ and $\Lambda^{\ell-1} (x^{-1} \phi^*T^*B) \otimes 
 \mathscr{H}$, respectively. 
Computations verbatim to \cite[(15)]{guillarmou2014low} yield
\begin{align}\label{indicial-operator-2}
P_{00} 
= -(x\partial_x)^{2} + L + x^2 \, \Pi \, \mathscr{R} \, \Pi^\perp \, \mathscr{R} \, \Pi.
\end{align}
where with respect to this splitting $u \leftrightarrow (u_1,u_2)^T$ the action of $L$ 
is given by 
\begin{equation}\label{LL}
L = \left( \begin{array}{cc}
\Pi D^2_{B} \Pi + \Bigl(\frac{b-1}2-N_B\Bigr)^{2} &2(d_B - \mathbb{I}) \\
2(d_B - \mathbb{I})^* & \Pi D^2_B \Pi + \Bigl(\frac{b+1}2-N_B\Bigr)^{2}
\end{array} \right)
\end{equation}
(do not confuse this matrix representation with 
the matrix formula under the splitting into fibrewise harmonic and perpendicular forms),
where $N_B$ denotes the number operator on $\Lambda^*(x^{-1} \phi^*T^*B)$, 
multiplying elements in $\Lambda^\ell(x^{-1} \phi^*T^*B)$ by $\ell$. The operator
$P_{00}$ is a differential b-operator. Its indicial family $I_\lambda(P_{00})$, given in \eqref{Indicial family},
and the set of indicial roots $\specb(P_{00})$ are defined in \S \ref{subsection-I}. 
\medskip

Note that $xP_{00}x$ is, up to $O(x^4)$ and modulo the unitary transformation \eqref{T}, the Hodge Laplace operator on $(V, \frac{dx^2}{x^4}+ \frac{\phi^*g_B}{x^2})$ twisted by the vector bundle $\calH$, equipped with the flat connection $\eth = \Pi(d_{B} - \mathds{I})\Pi$.

\subsection{Parametrix construction for the split Hodge Laplacian} \label{review grieser-para}

We now review the parametrix construction for the operator $\square_\phi$, 
which is the main result of \cite{grieser-para}. We begin with the definition 
of the split $\phi$-calculus. \medskip

A section $u$ of $\Lambda {}^\phi T^*M$ can be decomposed  over the collar neighborhood $\Umathbar$ of the
boundary $\partial M$ 
into fibrewise harmonic and perpendicular forms as in \eqref{eqn:Cinf decomp}. With 
respect to that decomposition, we can write $u$ over $\Umathbar$ as $\Pi u+\Piperp u$ or as a two-vector 
\begin{equation}
 \label{eqn: u decomp}
u \restriction \Umathbar = \begin{pmatrix}
 \Pi u \\ \Pi^\perp u
\end{pmatrix}.
\end{equation}
The different parts of the parametrix of $\square_\phi$ with respect to this decomposition 
have different index sets in their asymptotics on $M^2_\phi$. We introduce notation to describe 
this behavior. First we consider sections over $M$.

\begin{defn} \label{def:split phg space}
 For an index set $I$, we define $\calA^I_{\mathscr{H}}(M)$ to be the space of 
 $u\in\calA^I_{\textup{phg}}(M, \Lambda {}^\phi T^*M)$, whose 
 decomposition \eqref{eqn: u decomp} has index sets 
 $
\begin{pmatrix}
 I \\ I+2
\end{pmatrix}
 $.
 In other words, the leading terms to two orders in the 
 asymptotics of $u$ at $\partial M$ have values in $\mathscr{H}$. 
\end{defn} 
\
We extend this concept to sections 
on the $\phi$-double space
$M^2_\phi$. Note that for a section on $\Mbar^2$ we can distinguish its 
$\mathscr{H}$- and $\Cmath$-parts in both factors near the corner $\dM\times\dM$ 
and thus represent it by a $2\times2$ matrix. However, near $\lf=\dM\times \Mbar$ 
and $\rf=\Mbar\times\dM$ we can do this only in the first (resp. second) factor, so we get a $2\times1$ resp. $1\times2$ vector. The boundary hypersurfaces of $M^2_\phi$ lying over $\dM\times\dM$ are $\bbf$ and $\phif$. Thus we 
define the split $\phi$-calculus, identifying operators with lifts of their Schwartz kernels, as follows.

\begin{defn}[Split $\phi$-calculus] \label{def:split phi calc}
Let $\calE$ be an index family for $M^2_\phi$ and consider
$
K\in \calA^\calE_\phi(M,\Lambda_\phi M).
$ 
We write $\Pi K$ meaning that $\Pi$ acts on the first component in $M^2$, 
and $K\Pi$ meaning that $\Pi$ acts on the second component -- the notation suggested by 
interpreting $K$ as an operator. Then $$K \in \calA^\calE_{\phi,\mathscr{H}}(M)$$ if the following holds:

\begin{enumerate}
 \item 
 at $\bbf$, when $K$ is written with respect to the $\mathscr{H}$-$\Cmath$ decomposition as a $2\times2$ matrix $
\begin{pmatrix}
 \Pi K\Pi & \Pi K \Pi^\perp \\ \Pi^\perp K \Pi & \Pi^\perp K \Pi^\perp
\end{pmatrix}
$, it has index sets 
$\begin{pmatrix}
 \calE_\bbf & \calE_\bbf+2 \\ \calE_\bbf+2 & \calE_\bbf+4
\end{pmatrix}
$. 
\item
 at $\lf$, when $K$ is written as a vector $
\begin{pmatrix}
 \Pi K \\ \Pi^\perp K
\end{pmatrix}
$, it has index sets
$
\begin{pmatrix}
 \calE_\lf \\ \calE_\lf+2
\end{pmatrix}
$.
\item
at $\rf$, when $K$ is written  as a vector $(K\Pi,K\Pi^\perp)$, it has index sets 
$
\begin{pmatrix}
 \calE_\rf & \calE_\rf+2
\end{pmatrix}
$.
\item 
at $\phif$, when $K$ is written as a $2\times 2$ matrix, it has index sets
$$\begin{pmatrix}
 \calE_\phif & \calE_\phif+2 \\ \calE_\phif+2 & \calE_\phif
\end{pmatrix}
$$
\end{enumerate}
The split $\phi$-calculus is then defined in view of Definition \ref{phi-calculus-basic} by
$$ \Psi^{m,\calE}_{\phi, \mathscr{H}}(M) = \Psi^{m}_\phi(M, \Lambda_\phi M) + \calA^\calE_{\phi,\mathscr{H}}(M).$$
We call the matrices and vectors of index sets in (1)-(4) the \textit{split index family} associated with $\calE$.
\end{defn}

In short, the definition says that terms with a $\Piperp$ factor on the left have an extra $x^2$ factor from the left (affecting all faces except $\rf$), and terms with a $\Piperp$ factor on the right have an extra $x^2$ on the right (affecting all faces except $\lf$) -- except for the $\Piperp K\Piperp$ term at $\phif$, which is no better than the $\Pi K\Pi$ term.
 Essentially, this latter fact says that $K$ is diagonal to two leading orders at $\phif$. \medskip
 
Note that the proof of our Composition Theorem \ref{compositionth} also yields a composition theorem for $\Psi^{m,\calE}_{\phi,\Hmath}(M)$, by restriction to $\zf$. However, we do not need it, so we do not state it explicitly here.
\medskip

The following result is similar to \cite[Theorem 12]{grieser-para}, which is the main result therein. 
Our statement here is slightly stronger 
and we write out the proof in detail, since it will be adapted 
to the resolvent construction below.

\begin{theorem}\label{thm:split phi-parametrix}
For each $\alpha \notin \specb(P_{00})$, $\square_\phi$ has a parametrix $Q_\alpha$ such that
$$ 
\square_\phi Q_\alpha = \textup{Id} - R_\alpha,\ Q_\alpha\square_\phi = \textup{Id} - R'_\alpha, 
$$
where $Q_\alpha\in \Psi^{-2,\calE}_{\phi, \mathscr{H}}(M)$ and remainders
$R_\alpha\in x^\infty\,\Psi^{-\infty,\calE}_{\phi, \mathscr{H}}(M)$ and $R'_\alpha\in\Psi^{-\infty,\calE}_{\phi, \mathscr{H}}(M)\,x^\infty$.
The index family $\calE$ is given in terms of $\calE(\alpha)$ $($that is determined by $\specb(P_{00})$
and satisfies \eqref{eqn:index sets}$)$ by
$$
\calE_{\lf}=\calE(\alpha)_\lf-1, \ \calE_\rf=\calE(\alpha)_\rf-1, \ \calE_\bbf\geq-2, \ \calE_{\phif}\geq 0.
$$ 
Index sets for $Q_\alpha$ with respect to the $\mathscr{H}$-$\Cmath$ decomposition
are illustrated schematically in Figure \ref{fig-schematic-index} ($\ell$ shall run through elements of 
$\calE(\alpha)_\lf$ and $r$ through elements of  $\calE(\alpha)_\rf$ $)$
\end{theorem}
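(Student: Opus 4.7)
The plan is to construct the parametrix by inverting the diagonal blocks of $\square_\phi$ separately, using the classical $b$- and fully-elliptic $\phi$-calculi, and then correct for the off-diagonal blocks via a Neumann series argument. This follows the strategy of Grieser--Hunsicker \cite{grieser-para} closely.

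\textbf{Step 1 (diagonal inverses).} For $\alpha\notin\Re\specb(P_{00})$, Theorem \ref{thm: b-parametrix} applied to $P_{00}\in\Diff_b^2(M;\Hmath)$ produces $G_{00}\in\Psi_b^{-2,\calE(\alpha)}(M;\Hmath)$ with $P_{00}G_{00}=\Pi-R_{00}^r$, $G_{00}P_{00}=\Pi-R_{00}^\ell$, the remainders rapidly decaying at $\lf$ resp.\ $\rf$. By \eqref{eqn:square phi normal op} and Definition \ref{full ellipticity}, $P_{11}$ is fully elliptic on sections of $\Cmath$, so Theorem \ref{invphi} gives $G_{11}\in\Psi_\phi^{-2}(M;\Cmath)$ in the small calculus with $P_{11}G_{11}-\Piperp$ and $G_{11}P_{11}-\Piperp$ both in $x^\infty\Psi_\phi^{-\infty}$.

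\textbf{Step 2 (initial parametrix).} Glue an interior symbolic parametrix to the block diagonal operator
\[
Q_0 \;=\; \begin{pmatrix} x^{-1}G_{00}\,x^{-1} & 0 \\ 0 & G_{11} \end{pmatrix},
\]
defined near $\dM$ via the decomposition \eqref{eqn:Cinf decomp}. Multiplication by $x^{-1}$ on the left and right shifts index sets of $G_{00}$ by $-1$ at $\lf$ and $\rf$ and by $-2$ at $\bbf$ on $M_b^2$; lifting to $M_\phi^2$ via $\beta_{\phi-b}$ yields an index set $\geq 0$ at $\phif$ in the $\Pi\cdot\Pi$ slot. Thus the $\Pi\cdot\Pi$ block of $Q_0$ has index family $(\calE(\alpha)_\lf-1,\calE(\alpha)_\rf-1,\geq -2,\geq 0)$, while the $\Piperp\cdot\Piperp$ block lies in the small $\phi$-calculus ($\geq 0$ at every face).

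\textbf{Step 3 (off-diagonal error and Neumann iteration).} Compose and inspect:
\begin{itemize}
\item The $(1,1)$ entry is $\Pi-xR_{00}^r x^{-1}$ and the $(2,2)$ entry is $\Piperp-\widetilde R_{11}$, both with residuals vanishing to infinite order at the appropriate right/left faces.
\item The off-diagonal entries $(xP_{10}x)(x^{-1}G_{00}x^{-1})=xP_{10}G_{00}x^{-1}$ and $(xP_{01}x)G_{11}$ do not vanish; however, Assumption \ref{assum4} gives $xP_{01}x=-x^2\Pi\mathscr{R}\Piperp+O(x^3)$ and similarly for $xP_{10}x$, so these carry an extra factor $x^2$.
\end{itemize}
Write $\square_\phi Q_0=\mathrm{Id}-E$ where $E$ is a sum of off-diagonal terms with leading factor $x^2$ plus rapidly decaying terms. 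The composition theorem (our Theorem \ref{compositionth}, restricted to the $\zf$-free setting) shows that powers of $E$ improve the index sets at each iteration, so the formal Neumann series $Q_\alpha := Q_0\sum_{k\geq 0}E^k$ defines an operator in the split $\phi$-calculus; a symmetric construction on the left gives the corresponding statement for $Q_\alpha\square_\phi$.

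\textbf{Step 4 (verification of split structure).} The $+2$ shifts appearing on the $\Piperp$-slots in Definition \ref{def:split phi calc} arise directly from the two $x$-factors sandwiching $P_{01}$ and $P_{10}$ in \eqref{eqn:square_phi-prelim}: each Neumann correction introduces one off-diagonal coupling, hence one factor of $x^2$ on the relevant slot. At $\phif$ the $\Piperp\cdot\Piperp$ slot inherits index $\calE_\phif$ directly from $G_{11}$ (not $\calE_\phif+2$), because $G_{11}$ itself is the leading contribution there; this is precisely why the matrix in Definition \ref{def:split phi calc}(4) is not fully symmetric in the $\pm2$ shifts.

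The main obstacle is the bookkeeping in Step 3: one has to track how the index sets of $b$-type factors $x^{-1}G_{00}x^{-1}$ interact with $\phi$-type factors $G_{11}$ and the differential coefficients $xP_{ij}x$ under composition on the iterated blowup $M_\phi^2$. This requires the pushforward theorem (Proposition \ref{pushforward}) on a triple space with appropriate b-fibrations, and crucially uses Assumption \ref{assum4} to guarantee that each off-diagonal coupling produces two extra orders of decay, which ensures that the Neumann series converges in the calculus rather than merely in a weighted Sobolev sense.
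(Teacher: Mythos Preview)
Your overall strategy is close to the paper's, but Step~3 contains two genuine gaps that the paper's proof is specifically designed to avoid.

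\textbf{Gap 1: the error does not vanish at $\bbf$.} With your purely diagonal $Q_0$, the $(\Piperp,\Pi)$ entry of $E=\Id-\square_\phi Q_0$ is $-C'=-(xP_{10}x)(x^{-1}G_{00}x^{-1})$. The factor $x^{-1}G_{00}x^{-1}$ has index set $\geq -2$ at $\bbf$, and the left factor $xP_{10}x\in x^2\Diff_\phi^*$ only shifts this by $+2$, so $C'$ has index $\geq 0$ at $\bbf$ --- it does \emph{not} vanish there. Hence your claim that ``powers of $E$ improve the index sets at each iteration'' is false for $E$ itself at $\bbf$. The paper addresses exactly this point: it uses a Schur-complement-type first parametrix
\[
Q_1=\begin{pmatrix}\widehat{A}&-\widehat{A}B'\\-\widehat{D}C'&\widehat{D}\end{pmatrix},
\]
whose off-diagonal correctors cancel the leading off-diagonal error; the residual $R_1$ then has only the products $B'C'$, $C'B'$ (carrying $x^4$) and terms involving the diagonal remainders $R,S$, all of which genuinely vanish at $\bbf$ and $\phif$. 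The paper even remarks that ``if we had simply inverted the diagonal terms then we would have got a remainder whose $\Piperp R\Pi$ component does not vanish at $\bbf$''.

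\textbf{Gap 2: the error is not smoothing.} The off-diagonal entries $B'=(xP_{01}x)G_{11}$ and $C'$ lie in $\Psibar^{0}_\phi$, i.e.\ carry a conormal singularity of order $0$ along the diagonal. Thus $E^k$ remains of order $0$ for every $k$, and the Neumann series cannot be asymptotically summed to produce a remainder in $\Psi^{-\infty}$. The paper resolves this with a separate step: after making $R_1$ vanish at $\bbf,\phif$ and then at $\lf$ (its Step~2, which you also omit), it composes with a small $\phi$-parametrix $Q_\sigma$ to strip the conormal singularity (its Step~3). Only after the remainder is both smoothing and vanishing at $\lf,\bbf,\phif$ does the Neumann iteration (its Step~4) go through.

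In short, your diagonal-plus-Neumann scheme needs the Schur off-diagonal correction, an explicit $\lf$-removal step, and a symbol-removal step before the iteration is justified; these are precisely the paper's Steps~1--3.
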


\begin{figure}[h]
\centering
\begin{tikzpicture}[scale =1]

\begin{scope}[scale = 1.5]
\draw[-] (0,2)--(0,1);
\draw[-] (0,1)--(0.4,0.6);
\draw[-] (0.6,0.4)--(1,0);
\draw[-] (1,0)--(2,0);
\draw (0.4,0.6).. controls (0.5,0.7) and (0.7,0.5) ..  (0.6,0.4);

\node at (0.3,1.8) {\tiny{$\ell-1$}};
\node at (0,0.7) {\tiny{$-2$}};
\node at (0.7,0.7) {\tiny{$0$}};
\node at (1.6,0.2) {\tiny{$r-1$}};

\node at (1.5,1.5) {\tiny{$(\mathscr{H},\mathscr{H})$}};

\end{scope}

\begin{scope}[shift={(5,0)}, scale = 1.5]

\draw[-] (0,2)--(0,1);
\draw[-] (0,1)--(0.4,0.6);
\draw[-] (0.6,0.4)--(1,0);
\draw[-] (1,0)--(2,0);
\draw (0.4,0.6).. controls (0.5,0.7) and (0.7,0.5) ..  (0.6,0.4);

\node at (0.3,1.8) {\tiny{$\ell-1$}};
\node at (0.1,0.7) {\tiny{$0$}};
\node at (0.7,0.7) {\tiny{$2$}};
\node at (1.6,0.2) {\tiny{$r+1$}};

\node at (1.5,1.5) {\tiny{$(\mathscr{H},\Cmath)$}};
\end{scope}

\begin{scope}[shift={(0,-4)}, scale = 1.5]

\draw[-] (0,2)--(0,1);
\draw[-] (0,1)--(0.4,0.6);
\draw[-] (0.6,0.4)--(1,0);
\draw[-] (1,0)--(2,0);
\draw (0.4,0.6).. controls (0.5,0.7) and (0.7,0.5) ..  (0.6,0.4);

\node at (0.3,1.8) {\tiny{$\ell+1$}};
\node at (0.1,0.7) {\tiny{$0$}};
\node at (0.7,0.7) {\tiny{$2$}};
\node at (1.6,0.2) {\tiny{$r-1$}};

\node at (1.5,1.5) {\tiny{$(\Cmath,\mathscr{H})$}};
\end{scope}
\begin{scope}[shift={(5,-4)}, scale = 1.5]

\draw[-] (0,2)--(0,1);
\draw[-] (0,1)--(0.4,0.6);
\draw[-] (0.6,0.4)--(1,0);
\draw[-] (1,0)--(2,0);
\draw (0.4,0.6).. controls (0.5,0.7) and (0.7,0.5) ..  (0.6,0.4);

\node at (0.3,1.8) {\tiny{$\ell+1$}};
\node at (0.1,0.7) {\tiny{$2$}};
\node at (0.7,0.7) {\tiny{$0$}};
\node at (1.6,0.2) {\tiny{$r+1$}};

\node at (1.5,1.5) {\tiny{$(\Cmath,\Cmath)$}};
\end{scope}

\end{tikzpicture}
 \caption{Schematic structure of index sets of $Q_\alpha$
 }
  \label{fig-schematic-index}
\end{figure}

As preparation for the proof we need two considerations for dealing with the splitting of $\square_\phi$ in 
\eqref{eqn:square_phi-prelim}. Both arise from the need to compose a parametrix of $P_{00}$ with $\phi$-operators coming from the other entries of the matrix.  Recall that $P_{00}$ is a b-operator in the base $V=[0,\varepsilon) \times B$, so its 
Schwartz kernel, and the Schwartz kernel of its parametrix $Q_{00}$, lift to distributions on the b-double space 
$V^2_{\bb}$, valued in $\End(\mathscr{H})$ (and half-densities). On the other hand, the kernels of $\phi$-operators are distributions on the $\phi$-double space $\U^2_\phi$. Thus, in order to analyze the compositions, we first lift $Q_{00}$ to $V^2_\phi$ and\footnote{The notation $V^2_\phi$ indicates the double space constructed in Section \ref{subsec:phi psido}, with $M$ replaced by $V$ and with the trivial fibration (whose fibres are points).} then to $\U^2_\phi$. To do this we need the following facts.

 \subsubsection*{Fact 1: Lifting from b-double to $\phi$-double space.} 
 
Consider the lift of a kernel from $V^2_{\bb}$ to $V^2_\phi$ under the blow-down 
map $\beta_{\phi - \bb}$ in Figure \ref{figure 2}.  An elementary calculation 
(compare \cite[Proposition 1]{grieser-para}) implies that for any index 
family $\calF$ on $V^2_{\bb}$ and any $m\in\R$ (we use the notation introduced in 
Definitions \ref{b-calculus-basic} and \ref{phi-calculus-basic} and omit the vector bundle $\Hmath$ from the notation)
\begin{equation}
 \label{eqn:lifting}
\beta_{\phi - \bb}^*: \Psi_{\bb}^{-m}(V) \to \rho_\phif^m\Psi_\phi^{-m}(V) +\calA^{\calF_m}_\phi(V),
 \quad \beta_{\phi - \bb}^*: \calA^{\calF}_{\bb}(V) \to \calA^{\calF'}_\phi(V),
\end{equation}
where $\calF_{m,\bbf}=0$, $\calF_{m,\phif}=m \,\overline\cup\,(b+1)$, $\calF_{m,\lf}=\calF_{m,\rf}=\varnothing$ 
and $\calF'$ has the same index sets as $\calF$ at $\lf$, $\rf$ and $\bbf$, and in addition 
$\calF'_\phif = \calF_\bbf+(b+1)$ (the shift arises from the density factor, see \eqref{eqn:densities M2phi}).
We use this to lift the b-parametrix $Q_{00}$, where $m=2$. Note that, since $b\geq2$ by Assumption \ref{assum5}, the leading term in $\calF'_\phif$ is $2$, without logarithm. If $b=1$, there would be an additional logarithm. If $b=0$, the leading term would be $1$. 

 \subsubsection*{Fact 2: Extended calculus.} 
The lift $\beta_{\phi - \bb}^* Q_{00}$ is a distribution on $V^2_\phi$, valued in $\End(\mathscr{H})$
(and half-densities). Since $\mathscr{H}\subset\Cinf(F,\Lambda T^*F)$ and $\Umathbar$ is an $F$-bundle over $V$, this distribution can also be interpreted as a distribution on $\U^2_\phi$. It is conormal with respect to the interior submanifold $\{x=x', y=y'\}$, which is the diagonal in $V^2_\phi$, but is the fibre diagonal (thus larger than the diagonal) in $\U^2_\phi$. Thus $\beta_{\phi - \bb}^* Q_{00}$ does not define a pseudo-differential operator in $\Psi^{-2, *}_\phi(\U)$.\footnote{Note that it is the fibre diagonal in the $\phi$-double space, unlike the fibre diagonal in the b-double space which was used in the definition of $M^2_\phi$.} 
\medskip
 
Since in our argument sums of such operators and operators in $\Psi^{-2,*}_\phi(\U)$ will be considered, we  
define for any pseudo-differential calculus on $M$ or $\U$, the corresponding \textit{extended calculus} as 
the space of operators whose kernels are sums of two terms, one conormal with respect to the diagonal and one conormal with respect to the fibre diagonal, the latter term being supported near the corner $(\partial M)^2$ of $\Mbar^2$.
We denote the extended calculus by $\Psibar^{-2,*}_\phi(\U)$. \medskip

Note that the fibre diagonal hits the boundary of  $M^2_\phi$ only in the interior of  the $\phi$-face $\phif$, 
just like the diagonal, because $\U^2$ is a fibre bundle with fibre $F^2$. In particular, the extension 
does not affect asymptotic behavior at other faces. In \cite[Proposition 2]{grieser-para} it is shown that the 
extended calculus enjoys the same composition properties as the standard calculus. 
\medskip

The extended calculus is only used in the intermediate steps, the final para\-metrix lies in the standard (non-extended) calculus.

\begin{proof}[Proof of Theorem \ref{thm:split phi-parametrix}]
We follow the parametrix construction given in \cite{grieser-para}. We explain the main steps since our notation is somewhat different and since we will generalize the construction to the $k$-dependent case. We also simplify the construction slightly and give more details of the second step. We construct the right parametrix, it then turns out to be a left parametrix also, by standard arguments. The construction proceeds in four steps:
\begin{enumerate}
 \item[Step 1:] We first construct a parametrix $Q_1$, with $\square_\phi Q_1 = \textup{Id} - R_1$ 
 where the remainder $R_1$ vanishes at the boundary faces $\bbf$ and $\phif$ suitably.
 \item[Step 2:] We improve $Q_1$ to a parametrix $Q_2$ whose remainder also vanishes at $\lf$.
 \item[Step 3:] Using a parametrix in the small $\phi$-calculus, obtained by inverting the principal symbol, 
 we remove the interior singularity of the remainder.
 \item[Step 4:] Iteration gives a remainder as in the theorem.
\end{enumerate} \ \medskip

\noindent \textbf{Step 1:}
We first work near the boundary, i.e. in $\Umath$.
We use the fact that the diagonal terms of $\square_\phi$ in \eqref{eqn:square_phi-prelim} are elliptic resp. fully elliptic in the 
b-calculus and (extended) $\phi$-calculus sense, and that the product of the off-diagonal terms is higher order compared to the product of the diagonal terms in terms of $x$. 
 Abstractly, an approximate right inverse for a block matrix $P=
\begin{pmatrix}
 A & B\\ C & D
\end{pmatrix}
$ with this property is given by\footnote{This formula arises from taking leading terms 
in the Schur complement formula for the inverse of a block matrix.}
 \begin{equation}\label{Q1}
 Q_1= \begin{pmatrix}
 \widehat{A} & - \widehat{A} B' \\ -\widehat{D} C' & \widehat{D}
\end{pmatrix},
\end{equation}
where $\widehat{A}$, $\widehat{D}$ are right parametrices for $A,D$, say
 $$A\widehat{A}=\textup{Id} - R,\ D\widehat{D}=\textup{Id}-S\quad\text{and}\quad B' = B\widehat{D},\ C'=C\widehat{A}.$$
 A short calculation gives $PQ_1=\textup{Id} - R_1 $ where
$$R_1=R_1'+R_1'', \quad R_1' = 
\begin{pmatrix}
 R  & -RB' \\
 -SC' & S 
\end{pmatrix}
,\ R_1'' = 
\begin{pmatrix}
 B'C' & 0\\
 0 & C'B'
\end{pmatrix}.
$$
We apply this with $A=xP_{00}x$, $B=xP_{01}x$, $C=xP_{10}x$, $D=P_{11}$. 
Since $\alpha\not\in \specb(P_{00})$ there is a b-calculus parametrix $Q_{00}$, 
obtained from a small calculus parametrix (near the diagonal) and inversion of the 
indicial operator $I(P_{00})$
\begin{equation}
 \label{eqn:Q00 R00}
 P_{00}Q_{00} = \textup{Id} - R_{00},\quad Q_{00}\in\Psi_{\bb}^{-2,\calE(\alpha)}(V,\Hmath),\ 
 R_{00}\in \rho_\bbf \calA_{\bb}^{\calE(\alpha)}(V,\Hmath).
\end{equation}
Note that Theorem \ref{thm: b-parametrix} actually yields a better parametrix
with remainder in $\rho^\infty_\bbf\rho^\infty_\lf \calA_{\bb}^{\calE(\alpha)}(V,\Hmath)$.
However, an extension of that result to the resolvent is not straightforward,
and in fact the rather crude parametrix $Q_{00}$ with remainder $R_{00}$
is fully sufficient for our purposes. \medskip

\noindent Also, there is a $\phi$-calculus parametrix for $P_{11}$, i.e.\ 
\begin{equation}
 \label{eqn:Q11 R11}
P_{11}Q_{11}=\textup{Id} - R_{11},\quad 
Q_{11}\in\Psibar^{-2}_\phi(\U),\ R_{11}\in \calA^{\varnothing}_{\phi} (\U).
\end{equation}
(We leave out the bundle $\Lambda {}^\phi T^*M$ from notation in this proof.)
We refer the reader to \cite[Proposition 2]{grieser-para} for details why this works in the extended calculus,
and emphasize that we removed higher order terms at $\phif$ right away. \medskip
 
Then $\widehat{A}=x^{-1}Q_{00}x^{-1}$ lifts to an element of $\Psi_\phi^{-2,\calE}(V,\calH)\subset\Psibar_\phi^{-2,\calE}(\U)$ 
by \eqref{eqn:lifting}, with $\calE$ as in the statement. With $\widehat{D}=Q_{11}$ we get
\begin{align*}
&B'=(xP_{01}x) Q_{11}\in x^2\, \Psibar^{\, 0}_\phi(\U)=\Psibar^{\, 0}_\phi(\U) x^2,  \\
&C'=(xP_{10}x) \widehat{A} \in x^2\, \Psibar_\phi^{\, 0,\calE}(\U).
\end{align*} 
The extra $x^2$ factors in $B',C'$  give $Q_1\in \Psibar_{\phi, \mathscr{H}}^{-2,\calE}(\Umath)$.

We now analyze the remainder terms $R_1'$ and $R_1''$. 
We shall use the notation for index families
\begin{equation}
 \label{eqn:index set tuples}
\begin{split}
 &\calF = (a,b,l+\lambda,r+\rho) \ \textup{means:}\\
 &\calF_\bbf\geq a,\ \calF_\phif\geq b,\ \calF_\lf=\calE(\alpha)_\lf+\lambda,\ \calF_\rf=\calE(\alpha)_\rf+\rho.
\end{split}
\end{equation}
First, $R=xR_{00}x^{-1}$ lifts by \eqref{eqn:lifting}
to be in $\calA_{\phi}^{(1,4,l+1,r-1)}(\U)$. Then by the composition result 
\cite[Theorem 9]{grieser-para} we find $RB' \in \calA_{\phi}^{(3,6,l+1,r+1)}(\U)$. 
Similarly, we conclude with $S=R_{11}$ that $SC'\in \calA_{\phi}^{(\varnothing,\varnothing,\varnothing,r-1)}(\U)$.
The diagonal terms in $R_1''$ are 
\begin{align*}
&B'C'\in  x^4\Psibar_\phi^{0,\calE}(\U)
=\rho_\phif^4 \Psibar_\phi^{0,(2,0,l+3,r-1)}(\U), \\
&C'B'\in x^2\Psibar_\phi^{0,\calE} (\U)x^2
=\rho_\phif^4\Psibar_\phi^{0,(2,0,l+1,r+1)}(\U).
\end{align*}
In summary we get $R_1 \in \rho_\phif^4\Psibar_\phi^{0,\calR_1}(\Umath)$, where
\begin{equation}
 \label{eqn:phi-param R1}
\calR_1 = 
\begin{pmatrix}
(1,0,l+1,r-1) & (3,2,l+1,r+1) \\
(\varnothing,\varnothing,\varnothing,r-1) & (2,0,l+1,r+1)
\end{pmatrix}.
\end{equation}
Note that all terms in $R_1$ vanish at $\bbf$ and $\phif$. If we had simply inverted the 
diagonal terms then we would have got a remainder whose $\Piperp R\Pi$ component 
does not vanish at $\bbf$, which would not be good enough for the 
iteration argument in the resolvent construction below. 
\medskip

At this point the Schwartz kernels of $Q_1$ and $R_1$ are defined over $\Umath\times\Umath$ only. Using a cutoff function we modify $Q_1$ to an element of $\Psibar_{\phi, \mathscr{H}}^{-2,\calE}(M)$, without changing it near $\bbf\cup\phif$, so that $R_1=\textup{Id}-PQ_1$ is in $\rho_\phif^4\Psibar_\phi^{0,\calR_1}(M)$ with $\calR_1$ as above.
\medskip

\noindent \textbf{Step 2:}
We want to refine our parametrix $Q_1$ from Step 1 so that the remainder vanishes to 
infinite order at $\lf$. We accomplish this by determining $Q_1'$ supported near $\lf$ so that $\square_\phi Q_1'$ agrees with $R_1$  at $\lf$ to infinite order. Then 
\begin{equation}
\label{eqn:Q1' eqn} 
\square_\phi Q_1'  = R_1 - R_2,
\end{equation}
where $R_2$ has the same index sets as $R_1$ except for $\varnothing$ at $\lf$, and also has order zero.
The construction is essentially the same as that in \cite[Lemma 5.44]{melrose1993atiyah}, 
but we need to be careful with the correct exponents in the $\mathscr{H}-\Cmath$ splitting.
Finding $Q_1'$ amounts to solving
\begin{equation}
 \label{eqn:remove boundary terms}
\begin{pmatrix}
xP_{00}x  & xP_{01}x \\
xP_{10}x & P_{11}
\end{pmatrix}
\begin{pmatrix}
 q_0\\ q_1
\end{pmatrix}
\equiv 
\begin{pmatrix}
 r_0 \\
 r_1
\end{pmatrix}
\end{equation}
as an equation in the $(x,y,z)$ variables, with $(x',y',z')$ as parameters, 
to infinite order as $x\to0$, with control of the $x'\to0$ behavior which corresponds to 
the asymptotics at the intersection of $\lf$ with $\bbf$. Here
\begin{itemize}
\item $\begin{pmatrix} r_0 \\ r_1 \end{pmatrix}$ runs through the two columns of $R_1$, \medskip

\item $\begin{pmatrix} q_0 \\ q_1 \end{pmatrix}$ runs through the corresponding two columns of $Q_1'$.
\end{itemize}
By \eqref{eqn:phi-param R1} the exponents at $\lf$ occuring in $\begin{pmatrix}
 r_0 \\
 r_1
\end{pmatrix}
$ are $\begin{pmatrix}
 l+1 \\
 l+1
\end{pmatrix}
$
 for $l\in\pi\calE(\alpha)_\lf$, for either column of $R_1$ (in fact, slightly better in the first column). Then \eqref{eqn:remove boundary terms} 
 can be solved to leading order for $\begin{pmatrix}
 q_0 \\
 q_1
\end{pmatrix}
$ having leading exponents
$\begin{pmatrix}
 l-1 \\
 l+1
\end{pmatrix}
$ 
at $\lf$
by first choosing\footnote{If $l$ is an indicial root of $P_{00}$ then $q_0$ gets extra logarithmic terms.}  
$q_0$ satisfying (always to leading order) $(xP_{00}x)q_0=r_0$ (using b-ellipticity of $P_{0}$). 
Note that the term $(xP_{01}x)q_1$ will be of higher order. Then we choose $q_1$ satisfying 
$P_{11} q_1 = r_1 - (xP_{10}x)q_0$ (using full ellipticity of $P_{11}$).
Doing this iteratively removes all orders step by step. \medskip

Uniformity at $\lf\cap\bbf$ (as $x'\to 0$) is shown as in \cite[Lemma 5.44]{melrose1993atiyah}, we only need to check the exponents at $\bbf$. For the left column of $R_1$, they are $(1,\varnothing)^T$ by  \eqref{eqn:phi-param R1}, which implies they are $(-1,1)^T$ for the left column of $Q_1'$ by the explanation above. For the right column of $R_1$ they are $(3,2)^T$, which implies similarly $(1,2)^T$ for the right column of $Q_1'$. In summary, $Q'_1$ has index sets (only listing $\bbf,\lf$) 
$$\begin{pmatrix}
 (-1,l-1) & (1,l-1) \\
 (1,l+1) & (2,l+1)
\end{pmatrix},$$
so we get 
that $Q'_1\in\Psi^{-\infty,\calE}_{\phi, \mathscr{H}}(M)$ after possibly adjusting $\calE$ by allowing more log terms at $\lf$. 
More precisely, comparing with Figure \ref{fig-schematic-index} we see that 
$Q'_1$ is strictly higher order at $\bbf$ than $Q_1$, except in the $(\Cmath,\Cmath)$ (lower right) component 
(this information is not used in this paper). \medskip

\noindent \textbf{Step 3:}
We now have $\square_\phi Q_2 = \textup{Id} - R_2$ with $Q_2=Q_1+Q_1'$ and $R_2$ from \eqref{eqn:Q1' eqn}.
Next we remove the interior conormal singularity of the remainder. First, note that $( R_2\Pi, R_2\Piperp)$
has index sets (notation as in \eqref{eqn:index set tuples})
\begin{equation}
 \label{eqn:R2 index sets} 
\Bigl( (1,4,\varnothing,r-1) , (2,4,\varnothing,r+1) \Bigr).
\end{equation}
Since $\square_\phi$ is $\phi$-elliptic,  we may find a 'small' parametrix $Q_\sigma$ 
arising from inverting the $\phi$-symbol, so $\square_\phi Q_\sigma = \Id -R_\sigma$, 
with $Q_\sigma \in \Psi^{-m}_\phi(M)$ and $R_\sigma \in \Psi^{-\infty}_\phi(M)$.
Then we obtain
$$
\square_\phi Q_3=\textup{Id} - R_3, \ \textup{with} \ Q_3:=Q_2 + Q_\sigma R_2, \ R_3:= R_\sigma R_2
$$
Since $Q_\sigma R_2$ has the same index sets as $R_2$, see \eqref{eqn:R2 index sets}, it is 
also $\Psi^{-2,\calE}_{\phi, \mathscr{H}}(M)$ like $Q_2$ and does not contribute to the leading terms 
of $Q_3$ at $\bbf$ and $\phif$. Also, $R_3$ has the same index sets as $R_2$ and in addition is smoothing.
\medskip

A priori, our construction only yields that $Q_3$ lies in the extended $\phi$-calculus. 
The following standard regularity argument shows that $Q_3$ actually lies in the $\phi$-calculus: since $Q_\sigma$ is also a small left parametrix, $Q_\sigma\square_\phi=\textup{Id} - R_{\sigma}'$ with $R_{\sigma}'\in\Psi^{-\infty}_\phi$, we have 
$$ Q_\sigma - Q_\sigma R_3 = Q_\sigma \square_\phi Q_3 = Q_3 - R_{\sigma}' Q_3$$
and since both remainders are smoothing, $Q_3$ has the same singularity as $Q_\sigma$.
\medskip

\noindent \textbf{Step 4:}
The remainder term $R_3$ is smoothing and vanishes at $\lf,\bbf,\phif$, so that the Neumann series $\textup{Id}+R_3+R_3^2+\dots$ can be summed asymptotically by the composition theorem for the full $\phi$-calculus. Denote the sum by $\textup{Id}+S$. Multiplying this from the right gives $Q_\alpha =Q_3(\textup{Id}+S)$ and $R_\alpha=R_3(\textup{Id}+S)$ as required. \medskip

\noindent Finally, we note that $Q_\alpha$ has the same leading terms as $Q_1$, to two orders at $\bbf$ and $\phif$.
This concludes the proof of the theorem.
\end{proof}

\begin{Rem}
 Note that, as in \cite{grieser-para}, we first constructed the boundary parame\-trix and then combined it with the interior parametrix arising from inverting the $\phi$-symbol. This is the opposite order from what is done, e.g., by 
 Vaillant \cite{vaillant} in the same context. Our approach has the advantage of giving control of the $\mathscr{H}-\Cmath$ decomposition of the parametrix both on the domain and range side (the opposite order would give only control on the range side of the right parametrix). This gives more precise information on the parametrix and is needed, for example, for the proof of the Fredholm property stated below.
\end{Rem}
\begin{Rem}\label{Dpi-condition}
The fact that the off-diagonal terms of $\square_\phi$ in \eqref{eqn:square_phi-prelim} are higher order in terms of $x\to0$ is the reason that we impose the assumption $[\Pi,D_B]=0$. Without this assumption the off-diagonal terms would only be in $\Diff^2_\phi(M)\cap x\Diff^2_{\bb}(M)$ (rather than $x^2\Diff^2_\phi(M)$), and it is not clear what the result would be in this case.
\end{Rem}

\subsection{Mapping properties and absence of resonances}
By standard arguments the existence of a parametrix allows to deduce mapping, in particular Fredholm, results for $\square_\phi$, as well as asymptotic information on elements in its kernel. In our context we obtain different regularity for the $\mathscr{H}$ and the $\Cmath$ components. \medskip

\noindent The \emph{split Sobolev space} is the Sobolev space analogue of Definition \ref{def:split phg space} 
\begin{equation}
 \label{eqn:def Hsplit}
\Hsplit^2(M) = x^{-2}\,H^2_{\bb,0}(V;\mathscr{H}) + H^2_\phi(M; \Lambda_\phi M)
\end{equation}
where $H^2_{\bb,0}(V,\mathscr{H})$ is the space of $H^2_{\bb}$ sections of 
$\mathscr{H}$ compactly supported in $V=[0,\varepsilon) \times B$.\footnote{Note that $H^2_{\bb}\subset H^2_\phi\subset x^{-2}H^2_{\bb}$.}
Recall that we always use the measure $\dvol_{\bb}$.
 This space has a natural Hilbert space topology, see
\cite[\S 6.1, Definition 7]{grieser-para}. Note that
\begin{equation}\label{H-split-inclusions}
\begin{split}
&H^2_{\mathscr{H}}(M) \not\subseteq
L^2(M; \Lambda_\phi M), \\
&x^2H^2_{\mathscr{H}}(M) \subseteq
L^2(M; \Lambda_\phi M).
\end{split}
\end{equation}

There are also higher regularity versions of \eqref{eqn:def Hsplit}, defined as $x^{-2}\,H^{2+k}_{\bb,0}(V;\mathscr{H}) + H^{2;k}_{\phi;\bb}(M; \Lambda_\phi M)$ where in the second summand up to two $\phi$-derivatives in addition to up to $k$ b-derivatives are required to lie in $L^2$.\footnote{It would not be natural in our context to define higher split spaces by replacing $2$ by $k$ everywhere in \eqref{eqn:def Hsplit}, since these would not form a scale of spaces.} 
We only formulate the following result for the case $k=0$ since this is all that we need here.

\begin{Cor} \label{cor:grihun fredholm inverse}
The operator $\square_\phi$ is bounded
\begin{equation}
\label{eqn:box cphi bounded}
\square_{\phi}: x^{\alpha+1} \Hsplit^2(M) \to x^{\alpha+1} L^2 (M; \Lambda_\phi M)
\end{equation}
for all $\alpha\in\R$ and Fredholm if $\alpha\not\in \specb(P_{00})$.
Its Fredholm inverse $G_{\phi,\alpha}$ lies in $\Psi^{-2,\calE}_{\phi, \mathscr{H}}(M)$ 
with the same index family $\calE$ as in Theorem \ref{thm:split phi-parametrix}.
The leading asymptotic terms of the (lift of the) Schwartz kernel of $G_{\phi,\alpha}$ are as follows:
\medskip

\begin{enumerate}
\item
at $\bbf$: it is 
$(xx')^{-1}$ times the inverse of $I(P_{00})$, acting on $x^\alpha L^2$, pulled back from the 
b-face of $M^2_{\bb}$ to the $\textup{bf}$-face of $M^2_\phi$. In particular, only the 
$\mathscr{H}\mathscr{H}$ component of the Fredholm inverse $G_{\phi,\alpha}$ is non-zero at $\textup{bf}$.
\medskip

\item
at $\phif$: it is the inverse of the normal operator of $\square_{\phi}$, where the $\mathscr{H}$ part of the inverse is given by \eqref{eqn:inverse Delta}.
\end{enumerate}
\end{Cor}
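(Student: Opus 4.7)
The plan is to deduce the corollary from Theorem \ref{thm:split phi-parametrix} by combining three ingredients: boundedness via the block structure \eqref{eqn:square_phi-prelim}; Fredholm property via compactness of the parametrix remainders; and an explicit identification of the Fredholm inverse with the constructed parametrix up to finite rank corrections. Throughout, I would conjugate by $x^{\alpha+1}$ so as to reduce to the unweighted statement, and then use the split index family of $\calE$ to track leading terms.

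First I would verify the boundedness \eqref{eqn:box cphi bounded}. Using the decomposition \eqref{eqn:square_phi-prelim} and the definition \eqref{eqn:def Hsplit}, the diagonal term $xP_{00}x$ maps $x^{\alpha-1}H^2_b(V;\mathscr{H})$ into $x^{\alpha+1}L^2$ because $P_{00}\in\Diff^2_b$ acts boundedly on weighted $b$-Sobolev spaces and the extra pair of $x$-factors supplies the missing weight; $P_{11}\in\Diff^2_\phi$ acts boundedly $x^{\alpha+1}H^2_\phi\to x^{\alpha+1}L^2$ by the standard $\phi$-mapping property; and the off-diagonal entries $xP_{01}x, xP_{10}x$ map across the splitting with enough extra $x$-weight to compensate the different domain factors. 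Next I would establish the Fredholm property. The parametrix $Q_\alpha\in\Psi^{-2,\calE}_{\phi,\mathscr{H}}(M)$ of Theorem \ref{thm:split phi-parametrix} is bounded in the reverse direction $x^{\alpha+1}L^2\to x^{\alpha+1}\Hsplit^2$: the leading orders in Figure \ref{fig-schematic-index} (in particular $-2$ at $\bbf$ for the $(\mathscr{H},\mathscr{H})$ corner and higher at the other corners) are precisely what is needed, and the weight conditions at $\lf,\rf$ are exactly the exclusion $\alpha\notin\specb(P_{00})$. The remainders $R_\alpha\in x^\infty\Psi^{-\infty,\calE}_{\phi,\mathscr{H}}(M)$ and $R'_\alpha\in\Psi^{-\infty,\calE}_{\phi,\mathscr{H}}(M)\,x^\infty$ are smoothing with infinite vanishing at $\lf,\rf$ and hence compact on the relevant weighted $L^2$-space by a standard Rellich-type argument, so $\square_\phi$ is Fredholm with $Q_\alpha$ as a two-sided parametrix.

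To identify $G_{\phi,\alpha}$ I would exploit the elementary identity
\begin{equation*}
G_{\phi,\alpha} \;=\; Q_\alpha \;-\; \Pi_{\ker}\, Q_\alpha \;+\; G_{\phi,\alpha}\, R_\alpha,
\end{equation*}
obtained from $\square_\phi G_{\phi,\alpha}=\Id-\Pi_{\mathrm{coker}}$ and the parametrix identity. Elements of $\ker\square_\phi$ and $\ker\square_\phi^*$ are polyhomogeneous with the split structure of Definition \ref{def:split phg space}: this follows by applying the regularity statement in Theorem \ref{thm:b Fredholm regularity} to the $(0,0)$-block (after multiplying by $x^{-1}$ on both sides), and standard $\phi$-elliptic regularity to the $(1,1)$-block, and using that the off-diagonal couplings are of order $x^2$. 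Consequently the finite-rank projections $\Pi_{\ker}$ and $\Pi_{\mathrm{coker}}$ have smooth kernels lying in $\calA^\calE_{\phi,\mathscr{H}}(M)$, and the correction $G_{\phi,\alpha} R_\alpha$ is smoothing with rapid decay at all faces, giving $G_{\phi,\alpha}\in\Psi^{-2,\calE}_{\phi,\mathscr{H}}(M)$ with the same split index family as $Q_\alpha$.

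Finally, to read off (1) and (2) I would inspect the construction in Steps 1--3 of the proof of Theorem \ref{thm:split phi-parametrix}. At $\bbf$, only the $(\mathscr{H},\mathscr{H})$ corner of $Q_1=\widehat{A}$ contributes to leading order, and $\widehat{A}=x^{-1}Q_{00}x^{-1}$; the boundary value of $Q_{00}$ at the $b$-face of $V^2_b$ is, by Theorem \ref{thm: b-parametrix}, the inverse of $I(P_{00})$ on $x^\alpha L^2$, and \eqref{eqn:lifting} together with the density bookkeeping in \eqref{eqn:densities M2phi} yields (1); all subsequent corrections from Steps 2--4 and from the compact-projection terms above are strictly higher-order at $\bbf$. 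At $\phif$, the leading term is determined by inverting the normal family \eqref{eqn:square phi normal op}, which is block diagonal with the $\mathscr{H}$-block being $\Delta_{\mathbb{R}^{b+1}}\otimes\Id_{\widetilde{\mathscr{H}}}$ (whose boundedness between the weighted spaces in question is exactly \eqref{eqn:inverse Delta}, justifying the hypothesis $b\geq 2$) and the $\mathscr{C}$-block being the fully elliptic suspended operator on sections of $\mathscr{C}$. The main technical obstacle is the careful bookkeeping of the index shifts in the $\mathscr{H}-\mathscr{C}$ splitting when propagating the identity for $G_{\phi,\alpha}$ through the calculus, but this is handled by the same composition and pull-back arguments already used in the proof of Theorem \ref{thm:split phi-parametrix}.
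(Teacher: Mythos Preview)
Your proposal is correct and follows essentially the same approach as the paper, which likewise derives boundedness from the block structure \eqref{eqn:square_phi-prelim}, the Fredholm property from compactness of the parametrix remainders (citing \cite[Theorem 13]{grieser-para}), and the structure of the Fredholm inverse via the standard Melrose-type argument (the paper simply cites \cite[Propositions 5.42 and 5.64]{melrose1993atiyah} rather than writing out your identity). One small imprecision worth noting: the correction $G_{\phi,\alpha}R_\alpha$ does not have rapid decay at $\rf$ but rather inherits the index set $\calE_\rf$ there (this does not affect the conclusion), and for the identification at $\phif$ the paper carries out an explicit half-density pull-back computation showing that $x^{-1}Q_{00}x^{-1}$ restricts at $\phif$ to the Euclidean fundamental solution $|(T,Y)|^{1-b}$, whereas you (equivalently) invoke the normal operator \eqref{eqn:square phi normal op} directly.
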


\noindent The Fredholm inverse was defined after Theorem \ref{thm:b Fredholm regularity}.
We now continue with the proof of Corollary \ref{cor:grihun fredholm inverse}.

\begin{proof}
Boundedness follows easily from \eqref{eqn:square_phi-prelim} and the Fredholm property follows from boundedness and compactness of the parametrix and remainder on the appropriate spaces, see \cite[Theorem 13]{grieser-para}. The shift in weight arises since $Q_{00}$ in \eqref{eqn:Q00 R00} is bounded $x^\alpha L^2\to x^\alpha H^2_{\bb}$, so $x^{-1}Q_{00}x^{-1}: x^{\alpha+1}L^2\to x^{\alpha-1}H^2_{\bb}$. The statement on the Fredholm inverse then follows by standard arguments as in 
\cite[Propositions 5.42 and 5.64]{melrose1993atiyah}. \medskip

The leading terms at $\bbf$ and $\phif$ only arise from the first parametrix $Q_1$ constructed in Step 1 of the proof of Theorem \ref{thm:split phi-parametrix}. The leading (i.e. order $-2$) contribution at $\bbf$ occurs only in the $\mathscr{H}\mathscr{H}$-component and is $x^{-1}Q_{00}x^{-1}$, which implies the claim for $\bbf$. 
The leading (i.e. order $0$) term at $\phif$ is the direct sum of the pull-back of $x^{-1}Q_{00}x^{-1}$ to $\phif$ and of the inverse of the normal operator of $P_{11}$. Since $b\geq2$ by Assumption \ref{assum5}, the singularity of $Q_{00}$ at the diagonal is of the type $\left(|s-1|^2+|y-y'|^2\right)^{-\frac{b-1}2}$ where $s=x/x'$ (recall projective 
coordinates \eqref{sx}). Thus $x^{-1}Q_{00}x^{-1}$ is given
by the half-density (for the same constant $c$ as in \eqref{eqn:inverse Delta})
$$
c \cdot (xx')^{-1}\left(|s-1|^2+|y-y'|^2\right)^{-\frac{b-1}2} \sqrt{\frac{dx}x ds dydy'}.
$$
Pulling back this half-density to $M^2_\phi$ gives in projective coordinates \eqref{SU}
$$
c \cdot \left(|T|^2+|Y|^2\right)^{-\frac{b-1}2} \sqrt{dTdY\frac{dx}{x^2}\frac{dy'}{x^b}},
$$ 
which shows that the standard fundamental solution on $\R^{b+1}$ 
indeed appears at $\phif$ as in \eqref{eqn:inverse Delta}.
\end{proof}

\noindent We now obtain information on the kernel of $\square_\phi$.
\begin{Cor}
 \label{cor:kernel}
Under the Assumption \ref{assum3} the following holds:
for any solution $u\in x^{-1}L^2(M;\Lambda_\phi M)$ to $\square_\phi u=0$, we have that
 $u\in \calA_{\phi,\mathscr{H}}^{\calE(1)-1}(M)$. In particular, $u\in L^2(M;\Lambda_\phi M)$ and thus
 there are no resonances, i.e.
$$
\ker_{x^{-1}L^2(M;\Lambda_\phi M)} \square_\phi = \ker_{L^2(M;\Lambda_\phi M)} \square_\phi.
$$
Equivalently, we have for the unitarily equivalent Hodge Laplacian $\Delta_{\phi}$
 $$
\ker_{x^{-1}L^2(M;;\Lambda_\phi M; \dvol_\phi)} \Delta_{\phi} = \ker_{L^2(M; \Lambda_\phi M;\dvol_\phi)} \Delta_{\phi}.
$$
\end{Cor}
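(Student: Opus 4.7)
The plan is to combine interior elliptic regularity with the left parametrix provided by Theorem~\ref{thm:split phi-parametrix}, taken at the specific weight $\alpha=1$. By Assumption~\ref{assum3}, $1\notin\specb(P_{00})$, so Theorem~\ref{thm:split phi-parametrix} yields $Q_1\in\Psi^{-2,\calE}_{\phi,\mathscr{H}}(M)$ and a remainder $R'_1\in\Psi^{-\infty,\calE}_{\phi,\mathscr{H}}(M)\,x^\infty$ with
\[
Q_1\,\square_\phi=\Id - R'_1,\qquad \calE_\lf=\calE(1)_\lf-1.
\]
Since $\square_\phi$ is $\phi$-elliptic, interior elliptic regularity ensures that $u$ is smooth on the open interior $M^\circ$, so only the asymptotic behaviour of $u$ near $\partial M$ needs to be analyzed.

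Applying this operator identity to $u$ and using $\square_\phi u=0$ formally gives $u=R'_1 u$. Because the Schwartz kernel of $R'_1$ vanishes to infinite order at $\rf$ (from the factor $x^\infty$ on the right), $R'_1 u$ is well-defined as an absolutely convergent integral for any $u\in x^{-1}L^2$, and the identity $u=R'_1 u$ holds distributionally since $Q_1(\square_\phi u)$ is the action of the pseudodifferential operator $Q_1$ on the zero distribution. The split-calculus structure of $R'_1$ (Definition~\ref{def:split phi calc}) records at $\lf$ the index sets $\calE(1)_\lf-1$ for the $\Pi R'_1$ component and $\calE(1)_\lf+1$ for the $\Pi^\perp R'_1$ component; these pass to $R'_1 u$ in the left factor. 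Thus $u=R'_1 u\in\calA^{\calE(1)-1}_{\phi,\mathscr{H}}(M)$ in the sense of Definition~\ref{def:split phg space}.

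It then remains to deduce the non-resonance conclusion. By \eqref{eqn:index sets} and Assumption~\ref{assum3}, every $z\in\specb(P_{00})$ contributing to $\calE(1)_\lf$ satisfies $\Re z>1$, so every exponent appearing in $\calE(1)_\lf-1$ has strictly positive real part. Consequently $u$ vanishes at $\partial M$ to some positive order, and a direct integrability check against the $b$-density $\dvol_b$ yields $u\in L^2(M;\Lambda_\phi M)$. The corresponding statement for $\Delta_\phi$ then follows by conjugating with the isometry $W$ from \eqref{T}, which identifies $x^{-1}L^2(M;\Lambda_\phi M;\dvol_\phi)$ with $x^{-1}L^2(M;\Lambda_\phi M;\dvol_b)$ and intertwines $\Delta_\phi$ with $\square_\phi$.

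The main obstacle is to justify the identity $u=R'_1 u$ rigorously for $u$ merely in $x^{-1}L^2$, a space on which $Q_1$ is a priori not bounded. The strategy is to avoid applying $Q_1$ as a bounded operator on $x^{-1}L^2$ altogether: one reads $Q_1\,\square_\phi=\Id-R'_1$ as an operator identity acting on distributions, in which $Q_1$ need only be applied to the zero distribution $\square_\phi u$, while $R'_1 u$ is independently well-defined and polyhomogeneous thanks to the smoothing and infinite-order right-boundary decay of its kernel.
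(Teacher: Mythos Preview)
Your argument is essentially the paper's own, with one cosmetic and one substantive difference. The cosmetic one: you take the parametrix at weight $\alpha=1$, the paper takes $\alpha=0$; under Assumption~\ref{assum3} there are no indicial roots in $[-1,1]$, so $\calE(0)_\lf=\calE(1)_\lf$ and the two parametrices have identical index sets and give identical conclusions.

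The substantive difference is how you justify applying $Q_1\square_\phi=\Id-R'_1$ to $u\in x^{-1}L^2$. You argue distributionally and acknowledge the gap yourself. The paper closes this cleanly by duality: from \eqref{eqn:box cphi bounded} one gets $\square_\phi:x^{-1}L^2\to x^{-1}\Hsplit^{-2}$ bounded, and the parametrix identity then holds as an identity of bounded operators on $x^{-1}L^2$. Your choice $\alpha=1$ admits the same fix: by duality $\square_\phi:x^{-2}L^2\to x^{-2}\Hsplit^{-2}$ and $Q_1:x^{-2}\Hsplit^{-2}\to x^{-2}L^2$ are bounded, and since $x^{-1}L^2\subset x^{-2}L^2$ the identity applies to $u$. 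Invoking this removes the informality without changing your argument otherwise.
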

\begin{proof}
 Taking the dual spaces of \eqref{eqn:box cphi bounded} with respect to $L^2$ we get
 $\square_\phi: x^{\alpha-1}L^2 \to x^{\alpha-1} \Hsplit^{-2}$
 (compare \cite[Theorem 13]{grieser-para}) if $\alpha\not\in \specb(P_{00})$. We take $\alpha=0$ and use the left parametrix $Q_0$ from Theorem \ref{thm:split phi-parametrix}. Applying $Q_0$ to $\square_\phi u=0$ we get $u=R'_0u$. Since
$R'_0$ has index sets 
$$
\begin{pmatrix}
\calE(0)_\lf-1 \\ \calE(0)_\lf+1 
\end{pmatrix},
$$
at $\lf$ and $\varnothing$ at all other faces, and since $\calE(0)_\lf=\calE_\lf(1)$ by Assumption \ref{assum3},
we conclude $u\in \calA_{\phi,\mathscr{H}}^{\calE(1)-1}(M)$. The claim on no resonances follows.
\end{proof}

\section{Review of the resolvent construction on scattering manifolds}
 \label{section 3}

The operator $(\Delta_{\phi} + k^2)$ and its unitary transformation
$(\square_{\phi} + k^2)$ are fully elliptic $\phi$-differential operators and invertible for $k>0$, so the Schwartz kernels of their inverses are
polyhomogeneous distributions on $M^2_\phi \times (0,\infty)_k$, where
the $\phi$-double space $M_\phi^2$ is defined in \eqref{phi-double}, with a conormal singularity at $\Diag_\phi\times(0,\infty)$ where $\Diag_\phi$ is the lifted diagonal in $M^2_\phi$. However, that description
is not uniform up to $k=0$. In case of $\dim F = 0$ (scattering manifolds), the behavior of  the resolvent $(\square_{\phi} + k^2)^{-1}$
as $k\to 0$ was analyzed by Guillarmou-Hassell \cite{guillarmou2008resolvent, guillarmou2009resolvent}, 
as well as Guillarmou-Sher \cite{guillarmou2014low}, who define 
a blowup $M^2_{k, \textup{sc}}$ of $\Mbar^2 \times \R^+$, $\R^+=[0,\infty)$,
on which the resolvent is polyhomogeneous and conormal.
\medskip
 
In this section we review this construction and generalize it slightly to obtain a space $M^2_{k,\ssc,\phi}$, which in case of point fibres reduces to $M^2_{k,\ssc}$ and in the general case serves as intermediate step in our construction of the resolvent space $M^2_{k,\phi}$ for $\square_\phi$, which is carried out in Section \ref{sec:proof main thm}.
In Section \ref{section 4.4} we review the results of Guillarmou, Hassell and Sher in the case of point fibres.
\medskip

In this section $M$ is a manifold with fibred boundary, and local coordinates near the boundary are as in Definition \ref{Phi-vector-def}.

\subsection{Blowup of the codimension $3$ corner}

We consider $\Mbar^2 \times \R^+$ with copies of local 
coordinates $(x,y,z)$ and $(x',y',z')$ on the two factors $\Mbar$ near $\partial M$.
We use the following notation for the corners of $\Mbar^2\times \R^+$. 
For any $i_1,i_2 \in \{0,1\}$ we define
$
C_{i_1i_2} := \{x_j = 0 : \ \textup{for all $j$ with } \ i_j = 1\} \subset \Mbar^2
$
and
$$ C_{i_1i_2}^\bullet = C_{i_1i_2}\times \{0\}\,,\quad C_{i_1i_2}^+ = C_{i_1i_2} \times \R^+\,.$$
For example, the highest codimension corner of $\Mbar^2\times \R^+$ is given by
 $$C_{11}^\bullet = \{ x = x'  = k =0\}\,.$$
The blowup of the corner $C_{11}^\bullet$ and blow-down map are denoted by 
$$\beta_1:[\Mbar^2 \times \R^+, C_{11}^\bullet]\to \Mbar^2 \times \R^+$$
This leads to a new boundary hypersurface that we call $\text{bf}_{0}$, as illustrated in Figure 
\ref{figure 4}, where all other variables are omitted. We may introduce local projective coordinates near
each corner of $\text{bf}_{0}$. 
   
\begin{figure}[h]
\centering
\begin{tikzpicture}[scale =0.5]
\draw[-] (0,2)--(0,5);
\draw[-] (-2,-0.5)--(-5,-2);
\draw[-] (2,-0.5)--(5,-2);
\node at (-5.3,-2.3) {$x'$};
\node at (-3,2) {$\text{lb}$};
\node at (3,2) {$\text{rb}$};
\node at (0,-2) {$\text{zf}$};
           \node at (5.3,-2.3) {$x$};
           \node at (0,5.6) {$k$};
           \node at (0,0) {$\text{bf}_{0}$};         
           
 \draw (0,2).. controls (-1,1.5) and (-1.6,0.5) .. (-2,-0.5);           
 \draw (0,2).. controls (1,1.5) and (1.6,0.5) .. (2,-0.5);
 \draw (-2,-0.5).. controls (-1.7,-1) and (1.7,-1) .. (2,-0.5);           
  
\begin{scope}[shift={(13,0)}, scale = 1]
       \draw[->](0,1)--(0,5);
       \draw[->](0,1)--(-4,-1.5);
       \draw[->](0,1)--(4,-1.5);
\node at (-4.5,-1.9) {$x'$};
           \node at (4.5,-1.9) {$x$};
           \node at (0,5.6) {$k$};
  
\end{scope}
\draw[->] (5,1.5)-- node[above] {$\beta_{1}$} (9,1.5);
\end{tikzpicture}
\caption{ Blowup of $C_{11}^\bullet$} \label{figure 4}
\end{figure}

\subsubsection*{Near top corner}
Away from zf we may introduce, 
\begin{equation} \label{top corner}
\xi = \frac{x}{k}, \ \xi' = \frac{x'}{k}, \ y, \ z,  \ y', \ z', \ k.
\end{equation}
Here, $\xi'$ is a local boundary defining function of the right boundary face rb 
(we write $\rho_{\rf} = \xi'$), $\xi$ of the
left boundary face lb (we write $\rho_{\lf} = \xi$), and $k$ of the new boundary face $\text{bf}_{0}$
(we write $\rho_{\bbf_{0}} = k$). 

\subsubsection*{Near right corner}
In a similar way, away from lb we may introduce,
\begin{equation}\label{right corner}
s' = \frac{x'}{x}, \  \kappa = \frac{k}{x}, \ x, \ y, \ z, \ y', \ z'.
\end{equation}
Here, local boundary defining functions are given by
$ \rho_{\bbf_{0}} = x , \rho_{\rf} = s', \rho_{\zf} = \kappa$.
Projective coordinates near the left corner are obtained by interchanging the
roles of $x$ and $x'$, and replacing rb by lb. 

\subsection{Blowup of the codimension $2$ corners}

The next step is to blow up the codimension $2$ corners
that are given by 
\begin{align*}
C_{01}^\bullet = \Mbar \times \partial M \times \{0\}, \quad
C_{10}^\bullet = \partial M \times \Mbar \times \{0\}, \quad
C_{11}^+ = \partial M \times \partial M \times \R^+.
\end{align*}
More precisely we blow up their lifts to  $[\Mbar^2 \times \R^+, C_{11}^\bullet]$,
which we still denote by  $C_{01}^\bullet, C_{10}^\bullet, C_{11}^+$.
This defines 
\begin{equation}\label{M2kb}
M^{2}_{k,\bb} := \left[ \left[\Mbar^2 \times \R^+, C_{11}^\bullet\right], C_{01}^\bullet, C_{10}^\bullet, C_{11}^+\right]
\end{equation}
with blow-down map $\beta_2$ and new front faces  $\lfz$, $\rfz$ and $\bbf$.
This blowup is illustrated in Figure \ref{fig:boat1}.
We keep the notation $\bbf_0$ for the lift of the face $\bbf_0$. \medskip

\begin{figure}[h]
\centering
\begin{tikzpicture}[scale =0.5]
      \draw[-](-1,2)--(-1,5);
       \draw[-](1,2)--(1,5);
       
       \draw[-](-3,-0.5)--(-6,-2);
       \draw[-](-2.5,-2)--(-6,-3.8);
       
           \draw[-](3,-0.5)--(6,-2);
       \draw[-](2.5,-2)--(6,-3.8);

\draw (-1,2).. controls (-1.2,1.7) and (0.8,1.7) .. (1,2);
\draw (-1,2).. controls (-2,1.5) and (-2.5,1) .. (-3,-0.5);
\draw (1,2).. controls (2,1.5) and (2.5,1) .. (3,-0.5);
    
\draw (-3,-0.5).. controls (-2.5,-1) and (-2.5,-2.2) .. (-2.5,-2);
\draw (3,-0.5).. controls (2.5,-1) and (2.5,-2.2) .. (2.5,-2);
\draw (-2.5,-2).. controls (-2,-2.8) and (2,-2.8) .. (2.5,-2);

 \node at (0,0) {$\text{bf}_{0}$};
 \node at (0,-3.5) {$\text{zf}$};
 \node at (0,4) {$\text{bf}$};
 \node at (5,0) {$\text{rb}$};
 \node at (-5,0) {$\text{lb}$};
 \node at (5,-2.5) {$\text{rb}_{0}$};
 \node at (-5,-2.5) {$\text{lb}_{0}$};
                                       
\begin{scope}[shift={(15,0)}, scale = 1]
\draw[-] (0,2)--(0,5);
\draw[-] (-2,-0.5)--(-5,-2);
\draw[-] (2,-0.5)--(5,-2);
\node at (-5.2,-2.6) {$x'$};
           \node at (5.1,-2.6) {$x$};
           \node at (0,5.5) {$k$};
           \node at (0,0) {$\text{bf}_{0}$};

 \draw (0,2).. controls (-1,1.5) and (-1.6,0.5) .. (-2,-0.5);           
 \draw (0,2).. controls (1,1.5) and (1.6,0.5) .. (2,-0.5);
 \draw (-2,-0.5).. controls (-1.7,-1) and (1.7,-1) .. (2,-0.5);           
           
\end{scope}
\draw[->] (5,1.5)-- node[above] {$\beta_{2}$} (10,1.5);
\end{tikzpicture}
\caption{ Blow up of $C_{01}^\bullet, C_{10}^\bullet, C_{11}^+.$}
  \label{fig:boat1}
\end{figure}

\noindent The associated (full) blowdown map to $\Mbar^2 \times \R^+$
is given by 
\begin{equation*}
\beta_\bb = \beta_{2}\circ \beta_{1}: M^{2}_{k,\bb} 
\longrightarrow  \Mbar^2 \times \R^+.
\end{equation*}
We now describe the blowup in terms of projective coordinates
that are valid near some of the intersections of the various 
boundary hypersurfaces in $M^{2}_{k,\bb}$.

\subsubsection*{Projective coordinates near $\bbf_{0} \cap \bbf$}
We use the projective coordinates \eqref{top corner}. The new projective
coordinates near the left corner on the top are now given by 
\begin{equation}\label{ bf cap bf0 co}
\zeta = \frac{\xi}{\xi'} = \frac{x}{x'}, \ \xi' = \frac{x'}{k}, \ y, \ z, \ y', \ z', \ k.
\end{equation}
where $\rho_{\lf} = \zeta, \rho_{\bbf} = \xi', \rho_{\bbf_{0}} = k$. Interchanging the 
roles of $x$ and $x'$ gives a set of coordinates near the left top corner, i.e. near $bf_{0} \cap bf$,
valid away from rf.

\subsubsection*{Projective coordinates near $bf_{0} \cap rb_{0}$}
We use the projective coordinates \eqref{right corner}. The new projective
coordinates in the lower right corner, where $\textup{bf}_{0}, \rf_{0}$ and $\textup{zf}$ meet, are now given by 
\begin{equation}
s' = \frac{x'}{x}, \  \tau = \frac{\kappa}{s'} = \frac{k}{x'}, \ x, \ y, \ z, \ y', \ z'.
\end{equation}
where $\rho_{\zf} = \tau, \rho_{rb_{0}} = s', \rho_{\bbf_{0}} = x$. Projective 
coordinates near the left lower corner, where $\textup{bf}_{0}, \lf_{0}$ and $\textup{zf}$ meet, 
are obtained by interchanging the roles of $x$ and $x'$. Projective coordinates near the other 
corners are obtained similarly. 

\subsection{Blowup of the fibre diagonal} \label{subsec:bup fibre diag}

The final step is to blow up the intersection of the lifted (interior) fibre diagonal, which is locally given by $x=x',y=y'$, with the face $\bbf$:
\begin{equation}
\label{eqn:def diag k sc phi}
\textup{diag}_{k,sc,\phi} := \beta_\bb^*(\diag_{\phi,\textup{int}}\times\R^+) \cap \bbf = \{\zeta=1, \ y = y', \ \xi'=0\}
\end{equation}
in projective coordinates
\eqref{ bf cap bf0 co}.
The new front face is denoted $\ssc$ and the resulting space $$M^{2}_{k , \textup{sc},\phi}:= [M^{2}_{k,\bb} ; \textup{diag}_{k,sc,\phi}]$$ with blow-down map $\beta_3$ 
is illustrated in Figure \ref{fig:whole}. \medskip

\begin{figure}[h]
\centering
\begin{tikzpicture}[scale =0.5]

\draw[-] (-2,2.3)--(-2,5);
\draw[-] (-1,2)--(-1,5);
\draw[-] (1,2)--(1,5);
\draw[-] (2,2.3)--(2,5);
\draw[-] (-3,1)--(-6.5,-0.5);
\draw[-] (3,1)--(6.5,-0.5);
\draw[-] (-2.5,-0.5)--(-6.5,-2.3);
\draw[-] (2.5,-0.5)--(6.5,-2.3);
\draw (-1,2).. controls (-0.7,1.7) and (0.7,1.7) .. (1,2);
\draw (-1,2).. controls (-1.1,1.9) and (-1.8,2.1) .. (-2,2.3);
\draw (1,2).. controls (1.1,1.9) and (1.8,2.1) .. (2,2.3);
\draw (-2.5,-0.5).. controls (-2,-1.5) and (2,-1.5) .. (2.5,-0.5);
\draw (-3,1).. controls (-2.2,0.7) and (-2.2,-0.2) .. (-2.5,-0.5);
\draw (3,1).. controls (2.2,0.7) and (2.2,-0.2) .. (2.5,-0.5);
\draw (-2,2.3).. controls (-2.4,2) and (-2.5,1.7) .. (-3,1);
\draw (2,2.3).. controls (2.4,2) and (2.5,1.7) .. (3,1);

\node at (-4,-0.3) {$\text{lb}_{0}$};
\node at (4,-0.3) {$\text{rb}_{0}$};
\node at (0,4) {$\text{sc}$};
\node at (1.5,3) {$\tiny{\text{bf}}$};
\node at (3,4) {$\text{rb}$};
\node at (-3,4) {$\text{lb}$};
\node at (0,-2) {$\text{zf}$};
\node at (0,0.5) {$\text{bf}_{0}$};

\begin{scope}[shift={(15,1.5)}, scale = 0.9]
 \draw[-](-1,2)--(-1,5);
       \draw[-](1,2)--(1,5);
       
       \draw[-](-3,-0.5)--(-6,-2.1);
       \draw[-](-2.5,-2)--(-6,-3.9);
       
           \draw[-](3,-0.5)--(6,-2.1);
       \draw[-](2.5,-2)--(6,-3.9);

\draw (-1,2).. controls (-1.2,1.7) and (0.8,1.7) .. (1,2);
\draw (-1,2).. controls (-2,1.5) and (-2.5,1) .. (-3,-0.5);
\draw (1,2).. controls (2,1.5) and (2.5,1) .. (3,-0.5);
    
\draw (-3,-0.5).. controls (-2.5,-1) and (-2.5,-2.2) .. (-2.5,-2);
\draw (3,-0.5).. controls (2.5,-1) and (2.5,-2.2) .. (2.5,-2);
\draw (-2.5,-2).. controls (-2,-2.8) and (2,-2.8) .. (2.5,-2);

 \node at (0,0) {$\text{bf}_{0}$};
 \node at (0,-4) {$\text{zf}$};
 \node at (0,4) {$\text{bf}$};
 \node at (4,1) {$\text{rb}$};
 \node at (-4,1) {$\text{lb}$};
 \node at (5,-2.5) {$\text{rb}_{0}$};
 \node at (-5,-2.6) {$\text{lb}_{0}$};
           
\end{scope}
\draw[->] (5,1.5)-- node[above] {$\beta_{3}$} (10,1.5);
\end{tikzpicture}
\caption{Final bowup $M^{2}_{k , \textup{sc},\phi}:= [M^{2}_{k,\bb}, \textup{diag}_{k,sc,\phi}]$.}
  \label{fig:whole}
\end{figure}

\noindent The associated (full) blowdown map to $\Mbar^2 \times \R^+$
is given by 
\begin{equation*}
\beta_{k , \textup{sc},\phi} = \beta_{3} \circ \beta_{2} \circ \beta_{1}: M^{2}_{k , \textup{sc},\phi}
\longrightarrow  \Mbar^2 \times \R^+.
\end{equation*}
In the same pattern as before, 
we may introduce projective coordinates near the intersection $\textup{bf}_{0} \cap \textup{sc}$.
Using the coordinate system \eqref{ bf cap bf0 co}, we define new projective 
coordinates as follows
\begin{equation}\label{bf-sc-coord}
X := \frac{\zeta -1}{\xi'} = kT, \
 U := \frac{y-y'}{\xi'} = kY, \
  \xi'= \frac{x'}{k}, \ y', \ z, \ z', \ k,
\end{equation}
with $(T,Y)$ as in \eqref{SU}.
Here, the boundary defining functions are as follows: 
$\rho_{\textup{bf}_{0}} = k, \rho_{\textup{sc}} = \xi'$ and the 
boundary face $\textup{bf}$ lies in the limit $|(X,U)| \to \infty$.
We illustrate these coordinates in Figure \ref{fig-coord}. \medskip

The lifted diagonal in $M^2_{k,\ssc,\phi}$ is, by definition, the set
$$ \Diag_{k,\ssc,\phi} := \beta_{k,\ssc,\phi}^* (\Diag_\Mbar\times\R^+)\,.$$
  
\begin{figure}[h]
\centering
\begin{tikzpicture}[scale =0.5]

\draw[-] (-2,2.3)--(-2,5);
\draw[-] (1,2)--(1,5);
\draw[-] (-1,2)--(-1,5);
\draw[thick, red, ->] (0,1.7)--(0,4);
\draw[thick, magenta, ->] (0,1.7).. controls (-0.3,1.5) and (-0.3,0.5) .. (-0.3,0.3);

\draw[-] (2,2.3)--(2,5);
\draw[-] (-3,1)--(-6.5,-0.5);
\draw[-] (3,1)--(6.5,-0.5);
\draw[-] (-2.5,-0.5)--(-6.5,-2.3);
\draw[-] (2.5,-0.5)--(6.5,-2.3);
\draw[thick, blue, <->] (-1,2).. controls (-0.7,1.7) and (0.7,1.7) .. (1,2);
\draw (-1,2).. controls (-1.1,1.9) and (-1.8,2.1) .. (-2,2.3);
\draw (1,2).. controls (1.1,1.9) and (1.8,2.1) .. (2,2.3);
\draw (-2.5,-0.5).. controls (-2,-1.5) and (2,-1.5) .. (2.5,-0.5);
\draw (-3,1).. controls (-2.2,0.7) and (-2.2,-0.2) .. (-2.5,-0.5);
\draw (3,1).. controls (2.2,0.7) and (2.2,-0.2) .. (2.5,-0.5);
\draw (-2,2.3).. controls (-2.4,2) and (-2.5,1.7) .. (-3,1);
\draw (2,2.3).. controls (2.4,2) and (2.5,1.7) .. (3,1);

\node at (0,4.5) {\tiny{$\text{sc}$}};
\node at (0,-2) {\tiny{$\text{zf}$}};
\node at (0.7,-0.3) {\tiny{$\text{bf}_{0}$}};
\node[red] at (-0.4,3) {\tiny{$k$}};
\node[blue] at (0.5,1.2) {\tiny{$X$}};
\node[magenta] at (-0.6,1) {\tiny{$\xi'$}};
\node at (1.6,4) {\tiny{$\text{bf}$}};

\end{tikzpicture}
\caption{Illustration of projective coordinates in $M^{2}_{k , \textup{sc},\phi}$.}
\label{fig-coord}
\end{figure}

\begin{Rem} 
For each fixed $k_0> 0$ the level set $\{k=k_0\}$ in the blowup
manifold $M^{2}_{k , \textup{sc},\phi}$ is simply the $\phi$-space
$M_\phi^2$, introduced in \eqref{phi-double}. The face $\text{zf}$ is diffeomorphic to
the b-space $M^{2}_\text{b}$, and in fact in case of trivial
fibres $F$, the Hodge Laplacian $\Delta_{\phi}$ can be reduced to a
b-operator zf, see \eqref{P-square-relation} below. That observation has been crucial in 
the resolvent constructions by Guillarmou, Hassell and Sher.
\end{Rem}

If the fibres of $\phi$ are points then we generally write $\ssc$ instead of $\phi$, e.g.   $\Delta_{\textup{sc}}$, $\scT M$, $M^2_{k,\ssc}$,
$\beta_{k , \textup{sc}}$ instead of $\Delta_\phi$, $\phiT M$, $M^2_{k,\ssc,\phi}$, $\beta_{k , \textup{sc},\phi}$, respectively. This
notation is used e.g. in Definition \ref{def:ksc space} below. \medskip

Note that, if $M$ is a general $\phi$-manifold with trivialization $\Umathbar\cong[0,\eps)\times\dM$ 
near the boundary then $\phi$ defines a fibration $\Umathbar\to V=[0,\eps)\times B$, and this induces a fibration
\begin{equation}
 \label{eqn:F2 fibration}
 \Umath^2_{k,\ssc,\phi} \to V^2_{k,\ssc}
\end{equation}
with fibres $F^2$. Therefore, a distribution on $V^2_{k,\ssc}$ valued in $\End(\Hmath)$ for a 
subbundle $\Hmath\subset \Cinf(F,E)$ and some bundle $E\to\Umathbar$ can instead be considered as a distribution on $\Umath^2_{k,\ssc,\phi}$ valued in $E$.

\subsection{Asymptotics of the resolvent on
scattering manifolds} \label{section 4.4}

We close the section with a short review of the main 
result by Guillarmou-Hassell \cite{guillarmou2008resolvent}, as well as 
Guillarmou-Sher \cite{guillarmou2014low} on the resolvent 
kernel of the Hodge Laplacian in the special case where fibres are points, usually referred to as 
\emph{scattering} manifolds. 
\medskip

Note that, in the case of point fibres, $\Delta_{\textup{sc}}$ reduces to  the top left corner of $\Delta_{\phi}$ in 
\eqref{eqn:square_phi-prelim}. Conjugating $\Delta_{\textup{sc}}$ by $W$ as in \eqref{T} we obtain $\square_{\textup{sc}}$, the top left corner of $\square_\phi$.
Thus, if we define (identify $x$ with the corresponding multiplication operator)
\begin{equation}\label{P-square-relation}
P:= x^{-1} \circ \square_{\textup{sc}} \circ x^{-1}.
\end{equation}
then $P$ is precisely the operator $P_{00}$ in  \eqref{eqn:square_phi-prelim}.
This is an elliptic b-differential operator on $\Mbar$. By \cite[(15)]{guillarmou2014low}, parallel to the formulae in 
\eqref{indicial-operator-2} and \eqref{LL}, we have up to higher order terms coming from the 
higher order terms in the metric
\begin{align}\label{P00-sc}
P = -(x\partial_x)^{2} + L_{\textup{sc}},
\end{align}
where the action of $L_{\textup{sc}}$ on  $\Lambda^* (x^{-1} T^*B) 
\oplus x^{-2} dx \, \wedge \, \Lambda^{*-1} (x^{-1} T^*B)$ is given by
\begin{equation}\label{P00-tangential-sc}
L_{\textup{sc}} = \left( \begin{array}{cc}
\Delta_B + \Bigl(\frac{b-1}2-N_B\Bigr)^{2} &2d_B \\
2d_B^* & \Delta_B + \Bigl(\frac{b+1}2-N_B\Bigr)^{2}
\end{array} \right),
\end{equation}
where $\Delta_B$ is the Hodge Laplacian of $B$, and $d_B$ denotes here the exterior 
differential on $B$. $N_B$ denotes as before the number operator on $\Lambda^*(x^{-1} T^*B)$, 
multiplying elements in $\Lambda^\ell(x^{-1} T^*B)$ by $\ell$. The indicial family $I_\lambda(P)$
and the set of indicial roots $\specb(P)$ are defined as in \S \ref{subsection-I}. 
\medskip

Before we can state the main theorem of Guillarmou and Sher \cite{guillarmou2014low}, 
let us introduce the $(k,\textup{sc})$-calculus of pseudo-differential operators with Schwartz kernels 
lifting to $M^{2}_{k , \textup{sc}}$. As usual we identify the operators with the lifts of their Schwartz kernels.

\begin{defn}\label{def:ksc space}
Let $\Mbar$ be a compact manifold with boundary and $E\to\Mbar$ a vector bundle.
We define small and full $(k,\textup{sc})$-calculi as follows. Consider the 
following density bundle (compare to \eqref{eqn:densities M2phi})
\begin{equation}
\W _{\bb\phi}(M^{2}_{k , \textup{sc}}) := \rho_{\textup{sc}}^{-(b+1)}
\W _{\bb}(M^{2}_{k , \textup{sc}}) = \rho_{\textup{sc}}^{-2(b+1)}
\beta_{k , \textup{sc}}^* \W _{\bb}(\Mbar^2 \times \R^+).
\end{equation}
The corresponding half-density bundle is denoted by\footnote{This is precisely the
half-density bundle $\widetilde{\W }^{\frac{1}{2}}(M^{2}_{k , \textup{sc}})$ introduced in 
\cite[\S 2.2.2]{guillarmou2008resolvent}.} $\W ^{1/2}_{\bb\phi}(M^{2}_{k , \textup{sc}})$.
\begin{itemize}
\item[1.] 
The small $(k,\textup{sc})$-calculus, denoted $\Psi_{k,\textup{sc}}^{m}(M;E)$ for $m\in\R$, is the space of distributions on $M^2_{k,\ssc}$, valued in $\W ^{1/2}_{\bb\phi}(M^{2}_{k , \textup{sc}}) \otimes \End(E)$,
which are conormal of order $m-\frac14$ with 
respect to the lifted diagonal and which vanish to infinite order at all boundary hypersurfaces except  $\bbfz$, $\zf$ and $\ssc$.  \medskip

\item[2.] Consider $(a_{\bbf_0},a_{\zf},a_{sc}) \in \mathbb{R}^{3}$ and an index family $\mathcal{E}$ for $M^{2}_{k , \textup{sc}}$ such that
$\mathcal{E}_{\bbf},\mathcal{E}_{\lf}, \mathcal{E}_{\rf} = \varnothing$.
The full $(k,\textup{sc})$-calculus is then defined by
\begin{equation}
\label{GH-calculus}
\begin{split}
\Psi_{k,\textup{sc}}^{m,(a_{\bbf_0},a_{\zf},a_{sc}),\calE}(M;E) := 
\rho_{\bbfz}^{a_\bbfz}\rho_\zf^{a_\zf}\rho_\ssc^{a_\ssc}\,\Psi_{k,\textup{sc}}^{m}(M;E) + \calA_{k, \textup{sc}}^\calE(M;E),
\end{split}
\end{equation}
where we simplified notation by setting 
\begin{equation}
\begin{split}
\calA_{k, \textup{sc}}^\calE(M;E):= \calA_{\text{phg}}^\calE \Bigl(M^{2}_{k , \textup{sc}}, 
\W ^{1/2}_{\bb\phi} (M^{2}_{k , \textup{sc}})
\otimes \End(E)\Bigr).
\end{split}
\end{equation}

\end{itemize}
\end{defn}

See the remark after Definition \ref{def:conormal distr} concerning the order shift by $\frac14$.

\begin{Th} \label{initial ACM}\cite[Theorem 18]{guillarmou2014low}
Let $(M,g)$  be a scattering (asymptotically conic) manifold, satisfying 
Assumption \ref{assum1}, \ref{assum3} and \ref{assum5}, where
\begin{enumerate}
\item Assumption \ref{assum3} can be replaced with \eqref{noresonance-consequence} and
$0 \notin \specb(P)$. 
\item Assumption \ref{assum5} is equivalent to $\dim M \geq 3$.
\end{enumerate}
Then $(\square_{\textup{sc}} + k^{2})^{-1}$
lies in the full $(k,\textup{sc})$-calculus $\Psi_{k,\textup{sc}}^{-2, \, (-2,0,0),\, \mathcal{E}}(M,\Lambda_\ssc M)$,
where
\begin{align*}
&\mathcal{E}_{\zf} \geq -2, 
\quad \mathcal{E}_{\bbf_{0}} \geq -2, \quad \mathcal{E}_{\ssc} \geq 0,  \\
&\mathcal{E}_{\lf_{0}} = \mathcal{E}_{\rf_{0}} >0.
\end{align*}
Moreover, the leading terms of the resolvent at all boundary 
hypersurfaces of $M^2_{k,\ssc}$ are given by solutions of explicit model problems.
\end{Th}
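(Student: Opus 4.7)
The plan is to construct the Schwartz kernel of $(\square_{\ssc}+k^2)^{-1}$ as a polyhomogeneous conormal distribution on $M^2_{k,\ssc}$ by the standard parametrix method: assemble an initial parametrix from explicit solutions to model problems at each boundary hypersurface, refine it order by order at the faces carrying nontrivial asymptotics, and close the argument with a Neumann series.

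To define the initial parametrix one prescribes leading terms at $\zf$, $\bbfz$, $\ssc$ and at the lifted diagonal. At $\zf$ the model operator is $\square_\ssc$ itself; writing $\square_\ssc=xPx$ with $P\in\textup{Diff}^2_b(M)$ as in \eqref{P-square-relation} and using $0\notin\specb(P)$, Theorem \ref{thm: b-parametrix} supplies a parametrix of $P$ in the full b-calculus with index sets governed by \eqref{eqn:index sets}, and the no-resonance hypothesis \eqref{noresonance-consequence} upgrades it to the genuine Fredholm inverse; sandwiching by $x^{-1}$ produces the $\zf$-leading term of order $-2$. At $\bbfz$ the model is a scattering $b$-operator on the inward normal bundle to $\partial M\times\partial M\times\{0\}$ in which $k^2$ is retained as spectral parameter; its inverse is a $k$-dependent family of b-operators whose existence again rests on the no-resonance hypothesis. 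At $\ssc$ the model is the normal family \eqref{NP}, namely $\tau^2+|\xi|^2+k^2$ on $\R^{b+1}$, whose resolvent kernel is the classical Euclidean one, decaying exponentially as $k|(T,Y)|\to\infty$. Finally, in the interior the conormal singularity along the lifted diagonal $\Diag_{k,\ssc}$ is resolved by inverting the scattering principal symbol in the small calculus. Compatibility of these four pieces at the corners $\zf\cap\bbfz$, $\bbfz\cap\ssc$ and $\ssc\cap\Diag_{k,\ssc}$ follows because each neighbouring pair restricts on the common face to the same joint model of $\square_\ssc+k^2$.

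I would then refine the initial parametrix $G_0\in\Psi^{-2,(-2,0,0),\calE}_{k,\ssc}$ in the spirit of Step 2 of the proof of Theorem \ref{thm:split phi-parametrix}, inductively killing successive orders of the remainder at $\lfz$ and $\rfz$ by solving the indicial equations of the $\bbfz$- and $\zf$-models, until the remainder is smoothing and vanishes to infinite order at all faces except $\zf$, $\bbfz$, $\ssc$. A Neumann series, which converges because of the compactness of the residual error on the weighted Sobolev spaces adapted to the $(k,\ssc)$-calculus and because of the explicit decay at $\ssc$, then absorbs the smoothing remainder into a term in $\calA^\calE_{k,\ssc}$, producing the exact resolvent with the asserted structure and index sets $\calE_\zf,\calE_\bbfz\geq -2$, $\calE_\ssc\geq 0$, $\calE_{\lfz}=\calE_{\rfz}>0$.

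The principal obstacle is the uniform control of the $\bbfz$-model as $k\to 0$ and its matching with the $\zf$-model at the corner $\zf\cap\bbfz$. This is where Assumption \ref{assum5} becomes essential: \eqref{eqn:inverse Delta} shows that only for $b\geq 2$ does $\Delta_{\R^{b+1}}$ have bounded inverse between the relevant weighted spaces, which is precisely what makes the $\bbfz$-leading term polyhomogeneous of order $-2$ and compatible with the $\zf$-leading term determined by $\specb(P)$ via \eqref{eqn:index sets}. Once this matching at $\zf\cap\bbfz$ is arranged, the remaining steps of the construction are routine extensions of the b- and $\phi$-calculus techniques of Sections \ref{sec:b-calc}--\ref{sec:phi-calc}.
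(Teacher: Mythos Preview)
The paper does not give its own proof of this theorem; it is quoted verbatim from \cite[Theorem~18]{guillarmou2014low} as background for the main construction in Section~\ref{sec:proof main thm}. So there is no in-paper proof to compare against.

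Your sketch is broadly faithful to the Guillarmou--Hassell--Sher strategy (and to the paper's own generalisation of it in Section~\ref{subsec:intl param}), but one point is garbled. At $\zf$ the leading term of order $-2$ is \emph{not} the Fredholm inverse of $\square_\ssc$: it is the orthogonal projection onto $\ker_{L^2}\square_\ssc$, which enters because $(\square_\ssc+k^2)^{-1}\Pi_\ssc=k^{-2}\Pi_\ssc$. The Fredholm inverse, obtained as you say from the b-parametrix of $P$ sandwiched by $x^{-1}$, supplies the \emph{next} term, of order $0$; compare \eqref{Gkphi} and \eqref{eqn:zf param eqn}, which carry over verbatim to the scattering case. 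Your description of the $\bbfz$-model is also vague: the model there is the operator $\square_\ssc+k^2$ on the exact scattering cone over $\partial M$, and its analysis (not merely that of the $\ssc$-model) is where the no-resonance hypothesis and Assumption~\ref{assum5} genuinely enter.
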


\section{Low energy resolvent for $\phi$-metrics, proof of main theorem} \label{sec:proof main thm}

Recall that our aim is the construction of the inverse for $\Delta_{\phi} +k^2$, which is a self-adjoint operator in $L^2(M;\Lambda_\phi M;\dvol_\phi)$. We equivalently describe the parametrix construction for
$$
x^{-\frac{b+1}2} \circ (\Delta_{\phi} +k^2) \circ x^{\frac{b+1}2} = \square_\phi + k^2,
$$
which is a self-adjoint operator in $L^2(M;\Lambda_\phi M):=L^2(M;\Lambda_\phi M;\dvol_{\bb})$. In the collar neighborhood 
$\U$ of the boundary, $\square_\phi$ acts with respect to the splitting into 
fibre harmonic forms $\mathscr{H}$ and the perpendicular bundle $\Cmath$ by 
a $2\times 2$ matrix (see \eqref{eqn:square_phi-prelim})
\begin{equation*}
\square_\phi = 
\begin{pmatrix}
xP_{00}x  & xP_{01}x \\
xP_{10}x & P_{11}
\end{pmatrix}.
\end{equation*}
As in the parametrix construction for $\square_\phi$ in Section \ref{review grieser-para}, we will start the construction of a resolvent parametrix for $\square_\phi$ by taking parametrices for the diagonal terms $xP_{00}x+k^2$ and $P_{11}+k^2$. These are 
well-behaved (i.e. polyhomogeneous and conormal) on two different spaces:
\begin{itemize}
 \item
Recall from Section \ref{subsec:split hodge} that $xP_{00}x$ is, up to higher order terms, a Hodge Laplacian on $V=B\times[0,\eps)$ for a scattering metric, twisted by the bundle $\Hmath$. Therefore it has a resolvent parametrix which is 
well-behaved on the space $V^2_{k,\ssc}$ and valued in $\End(\Hmath)$. By \eqref{eqn:F2 fibration} and the subsequent explanation it is therefore a well-behaved distribution on $\Umath^2_{k,\ssc,\phi}\subset M^2_{k,\ssc,\phi}$.
\item
For operators like $P_{11}$, i.e. fully elliptic $\phi$-Laplacians, the resolvent is well-behaved on 
$\Umath^{2}_{\phi} \times \R^+\subset M^{2}_{\phi} \times \R^+$.  
\end{itemize}
These two spaces are illustrated along each 
other for comparison in Figure \ref{comparison-blowup}.
In order to construct the resolvent of $\square_\phi$ we therefore need to find a blowup of
$\Mbar^2 \times \R^+$ that blows down to both $M^{2}_{k , \textup{sc},\phi}$ and $M^{2}_{\phi} \times \R^+$.
We now construct such a space, which we call $M^{2}_{k,\phi}$.

\begin{figure}[h] 

\begin{tikzpicture}[scale = 0.5]      
\draw[magenta, thick, -] (-2,2.3)--(-2,5);
\draw[magenta, thick, -] (-1,2)--(-1,5);
\draw[magenta, thick, -] (1,2)--(1,5);
\draw[magenta, thick, -] (2,2.3)--(2,5);
\draw[-] (-3,1)--(-6.5,-0.5);
\draw[-] (3,1)--(6.5,-0.5);
\draw[-] (-2.5,-0.5)--(-5.7,-1.9);
\draw[-] (2.5,-0.5)--(5.7,-1.9);
\draw[magenta, thick] (-1,2).. controls (-0.7,1.7) and (0.7,1.7) .. (1,2);
\draw[magenta, thick] (-1,2).. controls (-1.1,1.9) and (-1.8,2.1) .. (-2,2.3);
\draw[magenta, thick] (1,2).. controls (1.1,1.9) and (1.8,2.1) .. (2,2.3);
\draw (-2.5,-0.5).. controls (-2,-1.5) and (2,-1.5) .. (2.5,-0.5);
\draw (-3,1).. controls (-2.2,0.7) and (-2.2,-0.2) .. (-2.5,-0.5);
\draw (3,1).. controls (2.2,0.7) and (2.2,-0.2) .. (2.5,-0.5);
\draw (-2,2.3).. controls (-2.4,2) and (-2.5,1.7) .. (-3,1);
\draw (2,2.3).. controls (2.4,2) and (2.5,1.7) .. (3,1);

\node at (0,-3) {$M^{2}_{k , \textup{sc},\phi}$};

\begin{scope}[shift={(15,1)}, scale = 2.5]
        \draw[-](1,-1) --(2,-1.5);
     \draw[-](-1,-1)--(-2,-1.5);
     \draw[magenta, thick, -](1,-1) --(1,1.5);
      \draw[magenta, thick, -](-1,-1)--(-1,1.5);
      \draw[magenta, thick, -](-0.5,-1.3)--(-0.5,1.5);
      \draw[magenta, thick, -](0.5,-1.3)--(0.5,1.5);

         \node at (0,-2) {$M^{2}_{\phi} \times \R^+$};
        \draw[magenta, thick](1,-1).. controls (1,-1.3) and (0.5,-1.3)..(0.5,-1.3);
           \draw[magenta, thick] (-0.5,-1.3).. controls (-0.5,-1.5) and (0.5,-1.5)..(0.5,-1.3);
            \draw[magenta, thick] (-1,-1).. controls (-1,-1.3) and (-0.5,-1.3)..(-0.5,-1.3);        
\end{scope}
       
\end{tikzpicture} 
\caption{Comparison of the blowup spaces $M^{2}_{k , \textup{sc},\phi}$ and $M^{2}_{\phi} \times \R^+$.}
\label{comparison-blowup}
\end{figure}

\subsection{Construction of the blowup space $M^{2}_{k,\phi}$}

The space $M^{2}_{k,\phi}$ is constructed from $M^{2}_{k , \textup{sc},\phi}$ 
by one additional blow-up. Let $\textup{diag}_{k,\phi}$ be the intersection of the lifted interior fibre diagonal with $\bbf_{0}$:
\begin{equation}
\label{eqn:def diag k phi}
\textup{diag}_{k,\phi} := \beta_{k,\ssc,\phi}^*(\diag_{\phi,\textup{int}}\times\R^+) \cap \bbf_0 = \{ X=0, U=0, k=0\}
\end{equation}
in projective coordinates 
\eqref{bf-sc-coord}.
Then we define
\begin{equation}
M^{2}_{k,\phi} := [ M^{2}_{k , \textup{sc},\phi} ; \textup{diag}_{k,\phi}]\,,
\end{equation}
with the blowdown map $\beta_{ \phi - \ssc} : M^{2}_{k, \phi} \longrightarrow M^{2}_{k , \textup{sc},\phi},$ and total blowdown map
\begin{equation*}
 \beta_{k, \phi} =  \beta_{k, \textup{sc},\phi} \circ \beta_{ \phi - \ssc} :\, 
M^{2}_{k,\phi} \longrightarrow \Mbar^{2} \times \R^+.
\end{equation*}
The resulting blowup space is illustrated in Figure \ref{fig:9}. \medskip

\begin{figure}[h]
\centering
\begin{tikzpicture}[scale =0.55]

\draw[-] (-2,2.3)--(-2,5);
\draw[-] (-1,2)--(-1,5);
\draw[-] (1,2)--(1,5);
\draw[-] (2,2.3)--(2,5);
\draw[-] (-3,1)--(-6.5,-0.5);
\draw[-] (3,1)--(6.5,-0.5);
\draw[-] (-2.5,-0.5)--(-6.5,-2.3);
\draw[-] (2.5,-0.5)--(6.5,-2.3);

\draw (-1,2)--(-0.5,1.8);
\draw (1,2)--(0.5,1.8);
\draw (-0.5,1.8).. controls (0,2) and (0,2) .. (0.5,1.8);

\draw (-1,2).. controls (-1.1,1.9) and (-1.8,2.1) .. (-2,2.3);
\draw (1,2).. controls (1.1,1.9) and (1.8,2.1) .. (2,2.3);

\draw (-2.5,-0.5).. controls (-2,-1) and (-0.8,-1.2) .. (-0.5,-1.2);
\draw (2.5,-0.5).. controls (2,-1) and (0.8,-1.2) .. (0.5,-1.2);
\draw (-0.5,-1.2).. controls (-0.2,-1.5) and (0.2,-1.5) .. (0.5,-1.2);

\draw (-3,1).. controls (-2.2,0.7) and (-2.2,-0.2) .. (-2.5,-0.5);
\draw (3,1).. controls (2.2,0.7) and (2.2,-0.2) .. (2.5,-0.5);
\draw (-2,2.3).. controls (-2.4,2) and (-2.5,1.7) .. (-3,1);
\draw (2,2.3).. controls (2.4,2) and (2.5,1.7) .. (3,1);

\draw (-0.5,-1.2).. controls (0,-1) and (0,1.6) .. (-0.5,1.8);
\draw (0.5,-1.2).. controls (1,-1) and (1,1.6) .. (0.5,1.8);

\node at (-4,-0.3) {\tiny{$\text{lb}_{0}$}};
\node at (4,-0.3) {\tiny{$\text{rb}_{0}$}};
\node at (-4,2) {\tiny{$\text{lb}$}};
\node at (4,2) {\tiny{$\text{rb}$}};
\node at (0,-2) {\tiny{$\text{zf}$}};
\node at (-1,1) {\tiny{$\text{bf}_{0}$}};
\node at (0.4,0.6) {\tiny{$\phif_0$}};
\node at (0,3) {\tiny{$\ssc$}};
\node at (-1.5,3) {\tiny{$\bbf$}};

\begin{scope}[shift={(13,0)}, scale = 0.85]
      
\draw[-] (-2,2.3)--(-2,5);
\draw[-] (-1,2)--(-1,5);
\draw[-] (1,2)--(1,5);
\draw[-] (2,2.3)--(2,5);
\draw[-] (-3,1)--(-6.5,-0.5);
\draw[-] (3,1)--(6.5,-0.5);
\draw[-] (-2.5,-0.5)--(-6.5,-2.3);
\draw[-] (2.5,-0.5)--(6.5,-2.3);
\draw (-1,2).. controls (-0.7,1.7) and (0.7,1.7) .. (1,2);
\draw (-1,2).. controls (-1.1,1.9) and (-1.8,2.1) .. (-2,2.3);
\draw (1,2).. controls (1.1,1.9) and (1.8,2.1) .. (2,2.3);
\draw (-2.5,-0.5).. controls (-2,-1.5) and (2,-1.5) .. (2.5,-0.5);
\draw (-3,1).. controls (-2.2,0.7) and (-2.2,-0.2) .. (-2.5,-0.5);
\draw (3,1).. controls (2.2,0.7) and (2.2,-0.2) .. (2.5,-0.5);
\draw (-2,2.3).. controls (-2.4,2) and (-2.5,1.7) .. (-3,1);
\draw (2,2.3).. controls (2.4,2) and (2.5,1.7) .. (3,1);
\draw[dashed] (0,-1.2).. controls (1,-1) and (1,1.6) .. (0,1.8);

\node at (-4,-0.3) {\tiny{$\text{lb}_{0}$}};
\node at (4,-0.3) {\tiny{$\text{rb}_{0}$}};
\node at (-3.5,2) {\tiny{$\text{lb}$}};
\node at (4,2) {\tiny{$\text{rb}$}};
\node at (0,-2) {\tiny{\text{zf}}};
\node at (-1,1) {\tiny{$\text{bf}_{0}$}};

\node at (0,3) {\tiny{$\ssc$}};
\node at (-1.5,3) {\tiny{$\bbf$}};

\end{scope}
\draw[->] (5,1.5)-- node[above] {$\beta_{\phi-\ssc}$} (9,1.5);

\end{tikzpicture}
 \caption{Blowup space $M^{2}_{k , \phi}$}
  \label{fig:9}
\end{figure}

\noindent By construction, the blowup space $M^{2}_{k , \phi}$
  blows down to 
$M^{2}_{k , \textup{sc},\phi}$. However, existence of a blowdown map to 
$M^{2}_{\phi} \times \R^+$ is non-trivial and is the subject of the 
following lemma. 

\begin{Lemma} \label{lemma b-map cons}
The identity in the interior extends to a b-map $$
\beta_{k,\phi}' : M^{2}_{k,\phi} \longrightarrow M^{2}_{\phi}\times \R^+\,.$$
More precisely, $\beta_{k,\phi}'$ is the composition of the blow-down maps for the blow-up of $M^2_\phi\times \R^+$ in the faces $f\times\{0\}$, where $f$ runs through the faces $\bbf$, $\phif$, $\lf$, $\rf$ of $M^2_\phi$, with $\bbf$ blown up first.
The face $\phif_0$ of $M^2_{k,\phi}$ is the lift of the front face of the blow-up of $\phif\times\{0\}$.
\end{Lemma}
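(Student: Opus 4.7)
The plan is to exhibit $M^2_{k,\phi}$ as an iterated blow-up of $M^2_\phi \times \R^+$ along (successive lifts of) the p-submanifolds $\bbf \times \{0\}$, $\phif \times \{0\}$, $\lf \times \{0\}$, $\rf \times \{0\}$, in this order. The map $\beta'_{k,\phi}$ is then defined as the composition of the four corresponding blow-down maps; it extends the identity on the interior, and is automatically a b-map since every blow-down of a p-submanifold is a b-map and compositions of b-maps are b-maps.

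The starting observation is that, by the definition of $M^2_\phi$ in \eqref{phi-double}, we have a canonical identification
\[
M^2_\phi \times \R^+ \;=\; \big[\overline{M}^2 \times \R^+;\, \dM \times \dM \times \R^+;\, \textup{diag}_\phi \times \R^+\big].
\]
The core of the proof is then to match two sequences of blow-ups of $\overline{M}^2 \times \R^+$: on one hand the sequence defining $M^2_{k,\phi}$ (blowing up in order $C_{111}$; $C_{110}, C_{011}, C_{101}$; $\textup{diag}_{k,\ssc,\phi}$; $\textup{diag}_{k,\phi}$), and on the other hand the sequence obtained from the alternative construction (blowing up $\dM\times\dM\times\R^+$; $\textup{diag}_\phi \times \R^+$; $\bbf\times\{0\}$; $\phif\times\{0\}$; $\lf\times\{0\}, \rf\times\{0\}$). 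The proof proceeds by repeatedly invoking the standard commutativity of blow-ups: if $P \subset Q$ are p-submanifolds, then $[X;P;Q^*]$ and $[X;Q;P^*]$ are canonically diffeomorphic, and blow-ups along disjoint centers commute. For instance, since $C_{111} \subset C_{110}$, one may switch these two blow-ups; then the lift of $C_{111}$ in $M^2_b\times\R^+$ is precisely $\bbf \times \{0\}$. A parallel argument for the fibre diagonals shows that blowing up $\textup{diag}_{k,\ssc,\phi}$ followed by $\textup{diag}_{k,\phi}$ can be reorganized as first blowing up $\textup{diag}_\phi \times \R^+$ (producing $\phif \times \R^+$, i.e.\ completing $M^2_\phi \times \R^+$) and then blowing up its $k=0$ trace $\phif \times \{0\}$, whose front face is by definition $\phif_0$.

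The principal obstacle is the combinatorial bookkeeping: at each commutation one must verify that the pair of centers being swapped is indeed nested or transverse, and track the lifts through all intervening blow-ups. This is purely formal but fiddly. An alternative and more concrete route is to proceed in the projective coordinate charts already set up around each face of $M^2_{k,\phi}$: in each chart one verifies that the identity on the interior extends smoothly to a map into the corresponding chart on $M^2_\phi \times \R^+$, and that each boundary defining function on the target pulls back as a product of powers of boundary defining functions on the source, which is precisely the b-map condition \eqref{hypb}. This local approach has the added benefit of producing the exponents $e(H,H')$ explicitly and of confirming directly that $\phif_0$ is the lift of the front face of the blow-up of $\phif \times \{0\}$.
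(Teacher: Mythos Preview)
Your overall strategy matches the paper's: exhibit $M^2_{k,\phi}$ as an iterated blow-up of $M^2_\phi\times\R^+$ and read off the b-map as the composite blow-down. The gap is in the commutation step you label ``a parallel argument for the fibre diagonals.'' The only commutation rules you invoke are the nested case $P\subset Q$ and the disjoint case; these suffice to swap $C_{111}$ with $C_{110}$ and to shuffle $C_{011},C_{101}$ past everything, but they do \emph{not} suffice for the crucial move. After $C_{110}$ has been blown up (so the ambient space is $M^2_b\times\R^+$), you need to exchange the blow-up of $A:=C_{111}=\bbf\times\{0\}=\{x'=0,\,k=0\}$ with that of $B:=\textup{diag}_\phi\times\R^+=\{x'=0,\,s=1,\,y=y'\}$. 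These two p-submanifolds are neither nested nor transversal (they share the defining function $x'$), so neither rule you cite applies. What the paper does --- and what your sketch is missing --- is to prove and invoke the more general commutation
\[
[Z;A,B,A\cap B]\ \cong\ [Z;B,A,A\cap B]
\]
for cleanly intersecting $A,B$ (their Lemma~\ref{lem:commuting blow-ups}(b)). The extra blow-up of $A\cap B=\{x'=0,\,k=0,\,s=1,\,y=y'\}$ is not optional bookkeeping: on the $[Z;A,B,\dots]$ side it produces exactly $\textup{diag}_{k,\phi}$ (hence $\phif_0$), and on the $[Z;B,A,\dots]$ side it produces $\phif\times\{0\}$. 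Your sentence ``whose front face is by definition $\phif_0$'' assumes this identification, which is precisely the content of the lemma you are trying to prove.

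Your fallback suggestion of checking the b-map condition chart by chart in projective coordinates is correct in principle and would constitute a complete proof, but you do not carry it out. If you prefer to stay with the blow-up reordering argument, the fix is to state and use the clean-intersection commutation lemma above; the paper's proof of it is a short explicit computation in the local model $\R_y\times\R^+_x\times\R^+_z$ with $A=\{x=z=0\}$, $B=\{x=y=0\}$.
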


\noindent For the proof we use the following result on interchanging the
order of blowups.

\begin{Lemma} \label{lem:commuting blow-ups}
Let $Z$ be a manifold with corners and $A,B\subset Z$ be two
p-submanifolds which intersect cleanly\footnote{This means that near any $p\in A\cap B$ one can find coordinates adapted to the corners 
 of $Z$ so that both $A$ and $B$ are coordinate subspaces locally. This implies that the lift of $B$ under the blow-up of $A$ is again a p-submanifold, and similarly with $A,B$ interchanged. See \cite[Prop. 5.7.2]{Mel-diff}.}. We shall blow up both submanifolds in different order and 
write e.g. $[Z;A,B] := [[Z,A],\widetilde{B}]$, where $\widetilde{B}$ is the lift of 
$B$ under the blow-down map $[Z,A]\to Z$. Then there are natural diffeomorphisms as follows.

\begin{itemize}
 \item[(a)] If $A,B$ are transversal or disjoint, or one is contained in the other, then 
 interchanging the order of blowups of $A$ and $B$ yields diffeomorphic manifolds with corners
 $$[Z;A,B]\cong[Z;B,A].$$

 \item[(b)] In general, interchanging the order of blowups of 
 $A$ and $B$ yields diffeomorphic results if additionally the intersection $A\cap B$ is blown up:
$$ 
[Z; A,B,A\cap B] \cong [Z; B, A, A\cap B].
$$
\end{itemize}
In both cases diffeomorphy holds in the sense that the identity on $Z\setminus(A\cup B)$ extends smoothly to a b-map 
between the two  spaces with smooth inverse.
\end{Lemma}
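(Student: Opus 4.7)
The plan is to reduce the statement to local-coordinate model computations using the clean intersection hypothesis, and then to assemble the resulting local diffeomorphisms into global b-maps with smooth inverses. Near any point of $A\cap B$, by the clean intersection assumption we may pick local coordinates $(x_1,\dots,x_N)$ adapted to the corner structure of $Z$ in which $A=\{x_i=0:i\in I\}$ and $B=\{x_j=0:j\in J\}$ for index sets $I,J\subset\{1,\dots,N\}$; then $A\cap B=\{x_\ell=0:\ell\in I\cup J\}$ is also a coordinate subspace. Away from $A\cap B$ the two submanifolds lie in disjoint open neighborhoods, so the identity on $Z\setminus(A\cup B)$ extends smoothly across either ordering of the blow-ups, and the nontrivial content is entirely concentrated near $A\cap B$.

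For part (a) there are three cases. The disjoint case is immediate. In the transversal case $I\cap J=\varnothing$, the two blow-ups introduce polar coordinates in disjoint groups of variables $\{x_i\}_{i\in I}$ and $\{x_j\}_{j\in J}$, so they manifestly commute on the local model, and gluing produces a global b-map with smooth inverse. In the nested case, say $A\subset B$ corresponding to $J\subset I$, we split the coordinates as $(x_J,x_{I\setminus J},x_{I^c})$ and note that blowing up $A$ first introduces polar coordinates in $(x_J,x_{I\setminus J})$ while the lift of $B$ is cut out by vanishing of the $x_J$-components of the spherical variable; this remains a p-submanifold, and an explicit coordinate computation going back to Melrose \cite{melrose1993atiyah} shows that the iterated blow-up is diffeomorphic via a b-map with smooth inverse to the one obtained by blowing up $B$ first.

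For part (b) the key observation is that blowing up $A\cap B$ first desingularizes the intersection: introducing polar coordinates in the variables $\{x_\ell\}_{\ell\in I\cup J}$ separates the lifts $\widetilde{A}$ and $\widetilde{B}$ into submanifolds cut out by disjoint subsets of the new spherical variables, hence transversal in $[Z;A\cap B]$. The transversal case of part (a) therefore gives
\[
[Z;A\cap B,\widetilde{A},\widetilde{B}]\cong [Z;A\cap B,\widetilde{B},\widetilde{A}].
\]
Moreover, since $A\cap B\subset A$ and $A\cap B\subset B$, the nested case of part (a) permits us to commute the blow-up of $A\cap B$ past $A$ and past $B$ in either order, producing
\[
[Z;A,B,A\cap B]\cong [Z;A\cap B,A,B]\quad\text{and}\quad [Z;A\cap B,B,A]\cong [Z;B,A,A\cap B].
\]
Chaining these three identifications gives the required diffeomorphism.

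The main obstacle will be the careful local-coordinate verification of the nested case of (a), which is the main building block for the chaining argument in (b); in particular, one has to check that the iterated polar coordinates define b-maps with smooth inverses and that the resulting identifications are invariantly defined independently of the choice of adapted chart, so that the local diffeomorphisms glue into global b-maps. Once this model computation is in hand, part (b) is a purely formal chaining of commutation identities and the gluing into global b-maps is routine.
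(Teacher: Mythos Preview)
Your treatment of part (a) is correct and standard; the paper simply cites \cite[Lemma 2.1]{analytic surgery} for this.

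For part (b), your chaining argument has a genuine gap. The identification $[Z;A,B,A\cap B]\cong[Z;A\cap B,A,B]$ requires two commutations: moving $A\cap B$ past $B$ inside $[Z;A]$, and then past $A$. The second is fine, since $A\cap B\subset A$ in $Z$ gives $[Z;A,A\cap B]\cong[Z;A\cap B,A]$ and one can append $B$. But the first commutation does \emph{not} follow from the nested case of (a). In $[Z;A]$, because $A\cap B\subset A$, the lifting convention forces the lift of $A\cap B$ to be the full preimage $\beta_A^{-1}(A\cap B)=\{r_I=0,\ x_{J\setminus I}=0\}$, which sits inside the front face; the lift of $B$ is the proper transform $\{\omega_{I\cap J}=0,\ x_{J\setminus I}=0\}$. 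These two p-submanifolds are neither nested (neither contains the other), nor transversal (they share the defining equations $x_{J\setminus I}=0$), nor disjoint. The inclusion $A\cap B\subset B$ in $Z$ simply does not survive the blow-up of $A$, precisely because the two submanifolds are lifted by different rules. Your argument therefore correctly establishes $[Z;A,A\cap B,B]\cong[Z;B,A\cap B,A]$, which is a closely related but different statement from the one in the lemma, where $A\cap B$ is blown up \emph{last}.

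The paper avoids this issue by proving (b) via a direct local-coordinate computation in the model case $Z=\R_y\times\R^+_x\times\R^+_z$, $A=\{x=z=0\}$, $B=\{x=y=0\}$, checking that projective coordinate systems near each pair of corresponding corners of $[Z;A,B,A\cap B]$ and $[Z;B,A,A\cap B]$ agree, and then remarking that the general case follows by the same method. Closing your chaining argument would itself require such a coordinate check at the problematic step.
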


\begin{proof}
For the  proof of statement (a) we refer to \cite[Lemma 2.1]{analytic surgery}.
For the proof of statement (b), that is stated without proof in \cite[Proposition 5]{Mel-real}, we shall illustrate the idea on a specific example
that models the blowup of $\diag_{k,\phi}$ in $M^2_{k,\textup{sc},\phi}$. Consider $Z:=\R_y \times \R^+_x \times \R^+_z$
with coordinates $(x,y,z)$ as indicated in the lower indices.
We set 
$$
A:= \{x=z=0\} \ \textup{and} \ B:= \{x=y=0\}.
$$ 
Their intersection is $A\cap B = \{x=0, y=0, z=0\}$. We shall consider projective
coordinates on the blowups $[Z; A, B, A \cap B]$ and $[Z; B, A, A \cap B]$.
\medskip

\noindent \textbf{\emph{Blowup $[Z; A, B, A \cap B]$}}:
The blowup is obtained in three steps, blowing up $A$ first, 
then the lift of $B$ second, and finally the lift of the intersection $A\cap B$
as the third step. This is illustrated in Figure \ref{figure 9}. \medskip

\begin{figure}[h]

\begin{tikzpicture}[scale =1]
 \draw[->] (-1,0)--(1,0) node at (1,-0.3) {$y$};
 \draw[->] (0,0)--(0,1) node at (-0.3,1) {$z$};
 \draw[->] (0,0)--(-0.5,-0.5) node at (-0.2,-0.6) {$x$};
 
 \begin{scope}[shift={(3,0)}, scale = 1]
 \draw[-] (0,0.2)--(0,1);
 \draw[-] (0.7,0.2)--(-0.7,0.2);
  \draw[-] (0.7,0)--(-0.7,0);
 \end{scope}
 
 \begin{scope}[shift={(6,0)}, scale = 1]

 \draw[-] (0.5,0.2)--(0.5,1);
 \draw[-] (-0.5,0.2)--(-0.5,1);
 \draw[-] (-0.5,0.2)--(-1.1,0.2);
 \draw[-] (0.5,0.2)--(1.1,0.2);
  \draw[-] (1,0)--(-1,0);

 \draw (-0.5,0.2).. controls (-0.3,0.1) and (0.3,0.1) .. (0.5,0.2);
  \draw (0,0.133)--(0,0);
 \end{scope}

 \begin{scope}[shift={(9,0)}, scale = 1]
 
 \draw[-] (0.5,0.2)--(0.5,1);
 \draw[-] (-0.5,0.2)--(-0.5,1);
 \draw[-] (-0.5,0.2)--(-1.1,0.2);
 \draw[-] (0.5,0.2)--(1.1,0.2);
  \draw[-] (1,0)--(0.3,0);
    \draw[-] (-1,0)--(-0.3,0);

 \draw (-0.5,0.2)--(-0.3,0.15);
  \draw (0.5,0.2)--(0.3,0.15);
  
   \draw (-0.3,0.15).. controls (0,0.2) and (0,0.2) .. (0.3,0.15);
    \draw (-0.3,0).. controls (0,-0.05) and (0,-0.05) .. (0.3,0);
    
 \draw node at (-0.3,0.15) {\tiny$\bullet$};
 
  \draw (0.3,0.15)--(0.3,0);
  \draw (-0.3,0.15)--(-0.3,0);

 \end{scope}
 \draw[->] (1,0.5) -- (1.7,0.5);
  \draw[->] (4,0.5) -- (4.7,0.5);
   \draw[->] (7,0.5) -- (7.7,0.5);

\end{tikzpicture}
\caption{$[Z; A, B, A \cap B]$.}
\label{figure 9}
\end{figure}

\noindent \textbf{\emph{Blowup $[Z; B, A, A \cap B]$}}:
The blowup is obtained in three steps, blowing up $B$ first, 
then the lift of $A$ second, and finally the lift of the intersection $A\cap B$
as the third step. This is illustrated in Figure \ref{figure 8}. \medskip

 \begin{figure}[h]
\begin{tikzpicture}[scale =1]
 \draw[->] (-1,0)--(1,0) node at (1,-0.3) {$y$};
 \draw[->] (0,0)--(0,1) node at (-0.3,1) {$z$};
 \draw[->] (0,0)--(-0.5,-0.5) node at (-0.2,-0.6) {$x$};

 \begin{scope}[shift={(3,0)}, scale = 1]
 \draw[-] (-0.5,0)--(-1,0);
 \draw[-] (0.5,0)--(1,0);
 \draw[-] (0.5,0)--(0.5,1);
 \draw[-] (-0.5,0)--(-0.5,1);
 \draw (-0.5,0).. controls (-0.3,-0.2) and (0.3,-0.2) .. (0.5,0);
 \end{scope}
 
 \begin{scope}[shift={(6,0)}, scale = 1]
 \draw[-] (-0.5,0)--(-1,0);
 \draw[-] (0.5,0)--(1,0);
 \draw[-] (0.5,0.2)--(0.5,1);
 \draw[-] (-0.5,0.2)--(-0.5,1);
 \draw[-] (-0.5,0.2)--(-1.1,0.2);
 \draw[-] (0.5,0.2)--(1.1,0.2);
 \draw (-0.5,0.2).. controls (-0.45,0.1) and (-0.45,0.1) .. (-0.5,0);
 \draw (0.5,0.2).. controls (0.45,0.1) and (0.45,0.1) .. (0.5,0);
 \draw (-0.5,0).. controls (-0.3,-0.2) and (0.3,-0.2) .. (0.5,0);
 \end{scope}

 \begin{scope}[shift={(9,0)}, scale = 1]
 \draw[-] (-0.5,0)--(-1,0);
 \draw[-] (0.5,0)--(1,0);
 \draw[-] (0.5,0.2)--(0.5,1);
 \draw[-] (-0.5,0.2)--(-0.5,1);
 \draw[-] (-0.5,0.2)--(-1.1,0.2);
 \draw[-] (0.5,0.2)--(1.1,0.2);
\draw (-0.5,0.2)--(-0.45,0.1);
\draw (0.5,0.2)--(0.45,0.1);

 \draw (-0.5,0).. controls (-0.5,0.1) and (-0.46,0.1) .. (-0.45,0.1);
 \draw (0.5,0).. controls (0.5,0.1) and (0.46,0.1) .. (0.45,0.1);

 \draw (-0.5,0).. controls (-0.3,-0.2) and (0.3,-0.2) .. (0.5,0);
  \draw (-0.45,0.1).. controls (-0.4,0) and (0.3,0) .. (0.45,0.1);
   \draw node at (-0.45,0.1) {\tiny$\bullet$};
 \end{scope}

 \draw[->] (1,0.5) -- (1.7,0.5);
  \draw[->] (4,0.5) -- (4.7,0.5);
   \draw[->] (7,0.5) -- (7.7,0.5);

\end{tikzpicture}
\caption{$[Z; B, A, A \cap B]$.}
\label{figure 8}
\end{figure}

There is an obvious isomorphism between the face lattices\footnote{i.e.\ the ordered sets whose elements are the faces of the space, ordered by inclusion} of the two spaces.
We shall write out explicitly the projective coordinates near the corners indicated by a bullet in Figures \ref{figure 9} and
\ref{figure 8}. Straightforward computations show that projective coordinates in both blowups are the same 
and given by
\begin{equation}\label{AB-projective}
s_{1} =  \frac{zy}{x}, \quad s_{2} =\frac{x}{y}, \quad s_{3} = \frac{x}{z}.
\end{equation}

\noindent These projective coordinates  are illustrated in Figure \ref{figure project}. \medskip

 \begin{figure}[h]

\begin{tikzpicture}[scale =2.7]

 \begin{scope}[scale = 1]
 
   \draw node at (-1.2,0.7) {$[Z; A, B, A \cap B]$};
   
 \draw[-] (-0.5,0.2)--(-0.5,1);
  \draw[-] (-0.5,0.2)--(-1.1,0.2);
  \draw[-] (-1,-0.5)--(0.5,-0.5);
  \draw (-0.1,-0.2)--(-0.1,-0.2);
  \draw (-0.2,0.1)--(-0.2,-0.5);

\draw (-0.2,0.1).. controls (-0.25,0.1) and (-0.48,0.16) .. (-0.5,0.2);
\draw (-0.2,0.1).. controls (-0.15,0.15) and (-0.05,0.15) .. (0,0.1);
 \draw (-0.2,0.1) circle [radius=0.1];
 
 \draw[blue,thick, ->] (-0.3,0).. controls (-0.35,0) and (-0.58,0.09) .. (-0.6,0.1) node at (-0.4,-0.1) {$s_{1}$};
 \draw[red,thick, ->] (-0.1,0.2).. controls (-0.05,0.25) and (0.05,0.22) .. (0.1,0.2) node at (0,0.35) {$s_{2}$};
 \draw[brown,thick,<-] (-0.08,-0.27)--(-0.08,0) node at (0.05,-0.16) {$s_{3}$};
 \end{scope}

 \begin{scope}[shift={(2.5,0)}, scale = 1]
 
  \draw node at (-1.2,0.7) {$[Z; B, A, A \cap B]$};

 \draw[-] (-0.5,0.2)--(-0.5,1);
 \draw[-] (-0.5,0.2)--(-1.1,0.2);
 \draw[-] (-1,-0.5)--(0.5,-0.5);

\draw (-0.3,-0.2)--(0.3,-0.2);
\draw (-0.3,-0.2).. controls (-0.4,-0.3) and (-0.32,-0.52) .. (-0.3,-0.5);
\draw (-0.5,0.2).. controls (-0.3,0) and (-0.3,-0.15) .. (-0.3,-0.2);

  \draw (-0.3,-0.2) circle [radius=0.1];
  
  \draw[brown,thick, ->]  (-0.45,-0.15).. controls (-0.55,-0.25) and (-0.455,-0.44) .. (-0.45,-0.45) node at (-0.59,-0.3) {$s_{3}$};
  \draw[blue,thick, <-]  (-0.35,0.25).. controls (-0.15,0.05) and (-0.15,-0.1) .. (-0.15,-0.15) node at (-0.05,0) {$s_{1}$};
   \draw[red,thick,->] (-0.18,-0.27)--(0.1,-0.27) node at (-0.03,-0.35) {$s_{2}$};

 \end{scope}
 
\end{tikzpicture}
\caption{Coordinates in corners of $[Z; A, B, A \cap B]$ and $[Z; B, A, A \cap B]$.}
\label{figure project}
\end{figure}
Similarly we may check that projective coordinates near any pair of corresponding 
corners of $[Z; A, B, A \cap B]$ and $[Z; B, A, A \cap B]$
coincide, proving the statement in this model case. The general case is studied along the same lines. 
\end{proof}

\noindent We can now prove Lemma \ref{lemma b-map cons}.
\begin{proof} [Proof of Lemma \ref{lemma b-map cons}] We proceed in the notation of 
\S \ref{section 3} and denote for example by $C_{11}^\bullet$ the highest codimension corner in
$\Mbar^2 \times \R^+$, and  $C_{11}^+ = \partial M\times\partial M\times \R^+$; we will use the same notation for the lifts. 
We will apply 
Lemma \ref{lem:commuting blow-ups} with
$$
Z=[\Mbar^2\times\R^+;C_{11}^+] = M^2_{\bb}\times\R^+, \quad 
A=C_{11}^\bullet, \quad B=\textup{diag}_\phi\times\R^+
$$ 
where $\textup{diag}_\phi$ is the fibre diagonal in the b-face of $M^2_{\bb}$, see \eqref{phi-double}.
In local projective coordinates $(s=x/x', x', y,y',z,z',k)$ near the resulting front 
face in $Z$, we have $A = \{x'=k=0\}$ and $B=\{x'=0, s=1, y=y'\}$, which shows that $A,B$ intersect cleanly {
(but not transversally). 
Now $[Z;A,*]=M^2_{k,\bb}$ (where $*$ denotes the left and right edges $C_{10}^\bullet,C_{01}^\bullet$) and the lift of $B$ to this space is $\diag_{k,sc,\phi}$ and disjoint from $*$, so $[Z;A,B,*]= M^2_{k,sc,\phi}$. The lift of $A\cap B$ to this space is $\diag_\phi$ and disjoint from $*$, so we obtain}
\begin{equation}
\label{eqn:M2kphi}
\begin{split}
\Bigl[ [Z;A,B,A\cap B]; C_{10}^\bullet,C_{01}^\bullet\Bigr]  = M^2_{k,\phi}\,.
\end{split}
\end{equation}
On the other hand, $[Z;B]=M^2_\phi\times\R^+$, so we also have
\begin{equation}
 \label{eqn:M2phitimes R}
 \Bigl[ [Z;B,A,A\cap B]; C_{10}^\bullet,C_{01}^\bullet\Bigr] = \Bigl[ [M^2_\phi \times \R^+,A,A\cap B]; C_{10}^\bullet,C_{01}^\bullet\Bigr] \,.
\end{equation}
By Lemma \ref{lem:commuting blow-ups} the spaces in \eqref{eqn:M2kphi}, \eqref{eqn:M2phitimes R} coincide so the claim follows since the lifts of $A$, $A\cap B$, $C_{10}^\bullet$, $C_{01}^\bullet$ are $f\times\{0\}$ with $f=\bbf, \phif, \lf,\rf$. 
\end{proof}

We can apply the construction of the $(k,\phi)$ double space to the boundary neighborhood $\Umathbar\cong [0,\eps)\times\dM$ instead of $M$. We can also apply it to the space $V=[0,\eps)\times B$, with the trivial fibration that has point fibres. Then, similar to \eqref{eqn:F2 fibration}, the fibration $\Umathbar\to V=[0,\eps)\times B$ induces a fibration
\begin{equation}
 \label{eqn:F2 fibration phi}
 \Umath^2_{k,\phi} \to V^2_{k,\phi}
\end{equation}
with fibres $F^2$. Therefore, a distribution on $V^2_{k,\phi}$ valued in $\End(\Hmath)$ for a subbundle $\Hmath\subset \Cinf(F,E)$ and some bundle $E\to\Umathbar$ can instead be considered as a distribution on $\Umath^2_{k,\phi}$ valued in $E$.

\subsection{Definition of the $(k,\phi)$-calculus}\label{subsec:def psi k phi}
The lifted diagonal $\textup{Diag}_{k,\phi}$, i.e. the closure of the preimage 
of $\{(p,p,k)\mid p\in M, k>0\}\subset \Mbar^2\times\R^+$ under the map 
$\beta_{k,\phi}:M^2_{k,\phi}\to \Mbar^2\times\R^+$, 
is a p-submanifold of $M^2_{k,\phi}$ and hits its boundary only in the faces $\ssc$, $\phif_0$ and $\zf$. 
The Schwartz kernel of the operator $\square_\phi+k^2$ lifts to $M^2_{k,\phi}$ to be conormal to 
$\textup{Diag}_{k,\phi}$\,, uniformly to the boundary with a non-vanishing delta type singularity, when written as a section of the half-density bundle
\begin{equation}
\W ^{\frac{1}{2}}_{\bb\phi}(M^{2}_{k , \phi}) := \rho_{\textup{sc}}^{\frac{-(b+1)}{2}}  
\rho_{\phif_0}^{\frac{-(b+1)}{2}} \W ^\frac12_{\bb}(M^{2}_{k , \phi}).
\end{equation}
This is because the same is true for $\square_\phi$ on $M^2_\phi$, hence on $M^2_\phi\times\R^+$, 
with respect to its diagonal $\textup{diag}_\phi\times\R^+$, and this diagonal hits $\bbf\times\{0\}$ transversally, 
so blowing up that face (which results in $\phif_0$ away from $\bbf_0$) does not affect conormality. 
Equivalently, in local coordinates we blow up $\{x'=k=0\}$, and this does not affect conormality with 
respect to $\{(T,Y)=0,z=z'\}$. We also use here that the half-density bundles are compatible in the 
sense that 
$$
\W ^\frac12_{\bb\phi}(M^2_{k,\phi}) =(\beta_\phi')^*\Bigl(\W ^\frac12_{\bb\phi}(M^2_\phi)
\otimes\W ^\frac12_{\bb}(\R^+)\Bigr).
$$
This motivates the following definition. As usual we define operators 
(in our case, families of operators on $M$ depending on the parameter $k>0$) 
by their Schwartz kernels and identify kernels on ${M}^2\times(0,\infty)$ with 
those lifted to the interior of $M^2_{k,\phi}$.

\begin{defn}\label{def:psi k phi}
Let $\Mbar$ be a compact manifold with fibred boundary and $E\to\Mbar$ a vector bundle.
We define small and full $(k,\phi)$-calculi as follows. 

\begin{itemize}
\item[1.]  The small $(k,\phi)$-calculus, denoted by $\Psi_{k,\phi}^m(M;E)$
for $m\in\R$, is the set of distributions on $M^2_{k,\phi}$, valued in 
$\W ^\frac12_{\bb\phi}(M^{2}_{k , \phi})  \otimes \End(E)$, 
which are conormal of order\footnote{See the remark after Definition \ref{def:conormal distr} concerning the order shift by $\frac14$.} 
$m-\frac14$ with respect to the lifted diagonal
and which vanish to infinite order at all boundary hypersurfaces except  $\phif_0$, $\zf$ and $\ssc$.

\item[2.] Consider $(a_{\phif_0},a_{\zf},a_{\ssc}) \in \mathbb{R}^{3}$,
and an index family $\mathcal{E}$ for $M^{2}_{k , \phi}$ satisfying
$ \calE_{\bbf}=\calE_{\lf}=\calE_{\rf}=\varnothing$. The full $(k,\phi)$-calculus is  defined by
\begin{equation}
\Psi_{k,\phi}^{m,(a_{\phif_0}, a_\zf,a_\ssc),\calE}(M;E) = 
\rho_{\phif_0}^{a_{\phif_0}}\rho_\zf^{a_\zf}\rho_\ssc^{a_\ssc}\,\Psi_{k,\phi}^m(M;E) + \calA^{\calE}_{k,\phi}(M;E),
 \end{equation}
where we simplified notation by setting 
\begin{equation*}
\begin{split}
 \calA^{\calE}_{k,\phi}(M;E):= \calA_{\text{phg}}^\calE \Bigl(M^{2}_{k , \phi}, 
\W ^{1/2}_{\bb\phi} (M^{2}_{k , \phi})
\otimes \End(E)\Bigr).
\end{split}
\end{equation*}
If the triple $(a_{\phif_0}, a_\zf,a_\ssc) = (0,0,0)$, we simply write $\Psi_{k,\phi}^{m,\calE}(M;E)$.
\end{itemize}
\end{defn}

Note that, as in Definition \ref{def:ksc space}, the numbers $a_{\phif_0}, a_\zf,a_\ssc$ refer to the behavior of the 
conormal singularity at the boundary faces, while the index family $\calE$ describes the boundary behavior 
of the smooth part. These two behaviors are allowed to, and in general will, be different.
If $E=\Lambda_\phi M$ then 
we also need a split version of the $(k,\phi)$-calculus, analogous to Definition \ref{def:split phi calc}, 
because of the different behavior of $\mathscr{H}$- and $\Cmath$-valued sections. 

\begin{defn}[Split $(k,\phi)$-calculus]\label{def:split k phi calculus}
 Let $\calE$ be an index family for $M^2_{k,\phi}$ as above.
 Let $\calA^\calE_{k,\phi,\mathscr{H}}(M)$ be the space of sections $K\in \calA^{\calE}_{k,\phi}(M;\Lambda_\phi M)$ which satisfy the conditions in Definition \ref{def:split phi calc}, with $\bbf$, $\lf$, $\rf$ and $\phif$ replaced by $\bbf_0$, $\lf_0$, $\rf_0$ and 
 $\phif_0$, respectively. The split $(k,\phi)$-calculus is defined as 
 $$ 
 \Psi^{m,\calE}_{k,\phi,\mathscr{H}}(M) = \Psi^{m}_{k,\phi}(M;\Lambda_\phi M) + 
 \calA^\calE_{k,\phi,\mathscr{H}}(M).
 $$
The space $\Psi_{k,\phi,\calH}^{m,(a_{\phif_0}, a_\zf,a_\ssc),\calE}(M)$ is defined in an analogous way.

\end{defn}

\subsection{Initial parametrix construction on $M^{2}_{k,\phi}$}
\label{subsec:intl param}

\begin{theorem}\label{initial resl phi}
There exists a resolvent parametrix $G(k)\in \Psi^{-2,\calE}_{k,\phi,\mathscr{H}}(M)$,
such that 
$$
(\square_\phi+k^2) G(k) = \textup{Id} - R(k),
$$ 
with remainder $R(k)\in \Psi^{-\infty,(1,1,1),\calR}_{k,\phi,\mathscr{H}}(M)$
where the index set $\calE$ satisfies
\begin{equation}
 \label{eqn:ind set k phi para}
 \begin{split}
&\calE_{\lfz}=\calE(-1)_\lf-1,\ \calE_\rfz=\calE(-1)_\rf-1,
\\ &\calE_\bbfz\geq-2,\ \calE_{\phif_0}\geq0,\ \calE_\ssc=0,\ \calE_\zf=-2,
\end{split}
\end{equation}
with $\calE(-1)$ determined by $\spec_{\bb}(P_{00})$ and satisfying \eqref{eqn:index sets} 
with $P=P_{00}$, $\alpha=-1$, and an index set $\calR$ positive at all faces. 
Moreover, $\calE_f$ and $\calR_f$ are empty at $f=\bbf,\lf,\rf$. Note that by Assumption 
\ref{assum3}, $\calE(-1)_\lf= \calE(1)_\lf > 1$ and similarly at $\rf$. Hence
\begin{equation}
 \label{eqn: lf rf positive}
\calE_{\lfz}>0,\quad \calE_\rfz>0.
\end{equation}

\end{theorem}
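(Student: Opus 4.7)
The plan is to mirror the split parametrix construction of Theorem~\ref{thm:split phi-parametrix}, but now at the level of the resolvent on the newly constructed space $M^2_{k,\phi}$. Using the block decomposition \eqref{eqn:square_phi-prelim}, write
$$
\square_\phi + k^2 = \begin{pmatrix} xP_{00}x + k^2 & xP_{01}x \\ xP_{10}x & P_{11}+k^2 \end{pmatrix},
$$
and take as initial ansatz the Schur-type block formula
$$
G_1(k) = \begin{pmatrix} \widehat{A}(k) & -\widehat{A}(k)\, xP_{01}x\, \widehat{D}(k) \\ -\widehat{D}(k)\, xP_{10}x\, \widehat{A}(k) & \widehat{D}(k) \end{pmatrix},
$$
where $\widehat{A}(k)$ and $\widehat{D}(k)$ are parametrices for $xP_{00}x+k^2$ on $\mathscr{H}$-sections and for $P_{11}+k^2$ on $\mathscr{C}$-sections, respectively. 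The $x^2$ factors carried by the off-diagonal entries are exactly what will produce the split index structure required by Definition~\ref{def:split k phi calculus}.

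For $\widehat{A}(k)$ I would invoke Theorem~\ref{initial ACM}: since $xP_{00}x$ is, up to negligible higher-order terms, the Hodge Laplacian of a scattering metric on $V=[0,\varepsilon)\times B$ twisted by the flat bundle $\mathscr{H}$, its low-energy resolvent lies in $\Psi_{k,\ssc}^{-2,(-2,0,0),\calE'}(V;\mathscr{H})$ on $V^2_{k,\ssc}$. Via the fibration \eqref{eqn:F2 fibration phi} this kernel is reinterpreted as a distribution on $M^2_{k,\ssc,\phi}$ and then pulled back under $\beta_{\phi-sc}$ to $M^2_{k,\phi}$; the conjugation by the boundary defining functions shifts the effective weight and accounts for the appearance of $\calE(-1)$ in \eqref{eqn:ind set k phi para}, with $-1\notin\specb(P_{00})$ guaranteed by Assumption~\ref{assum3}. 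For $\widehat{D}(k)$, the operator $P_{11}+k^2$ is fully elliptic as a $\phi$-operator uniformly in $k\geq 0$ (cf.\ \eqref{eqn:square phi normal op}), so Theorem~\ref{invphi} yields a parametrix in $\Psi^{-2}_\phi(M)$ depending smoothly on $k$, whose kernel then lifts through the b-map $\beta_{k,\phi}'$ of Lemma~\ref{lemma b-map cons} from $M^2_\phi\times\R^+$ to $M^2_{k,\phi}$. The face $\phi f_0$ is precisely where the two parametrices overlap.

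With $G_1(k)$ in hand, the remainder $R_1(k) = \textup{Id} - (\square_\phi + k^2)G_1(k)$ is conormal and vanishes to positive order at $\zf$, $\bbfz$, $\phi f_0$ and $\ssc$, but in general not at $\lfz$ nor along the interior diagonal. Following Step~2 of the proof of Theorem~\ref{thm:split phi-parametrix}, I would improve $G_1(k)$ by solving the model equation at $\lfz$ order by order as a formal power series in a defining function, using $b$-ellipticity of $P_{00}$ in the $\mathscr{H}$-block and full ellipticity of $P_{11}$ in the $\mathscr{C}$-block; the indicial-root condition $-1\notin\specb(P_{00})$ provided by Assumption~\ref{assum3} is what makes the indicial inversion possible, and \eqref{eqn:index sets} then produces $\calE_\lfz = \calE(-1)_\lf - 1$ and its analogue at $\rfz$, yielding \eqref{eqn: lf rf positive}. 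A small $(k,\phi)$-calculus parametrix obtained by inverting the $\phi$-principal symbol finally removes the interior conormal singularity of the remainder.

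The main obstacle, in my view, is the careful bookkeeping of index families through the various lifts and compositions --- in particular tracking how the orders at $\bbfz$ and $\phi f_0$ transform under the successive lifts of $\widehat{A}(k)$ from $V^2_{k,\ssc}$ through $M^2_{k,\ssc,\phi}$ to $M^2_{k,\phi}$, and verifying that $\widehat{A}(k)$ and $\widehat{D}(k)$ match to leading order at $\phi f_0$ so that the sum defining $G_1(k)$ is genuinely polyhomogeneous there. The compositions $\widehat{A}(k)\cdot xP_{01}x\cdot\widehat{D}(k)$ and its counterpart have to be controlled by a $(k,\phi)$-composition theorem, and their leading contributions at each face must be checked to confirm the split index structure claimed in the statement.
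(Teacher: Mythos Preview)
Your block-Schur ansatz with $\widehat{A}(k)$ and $\widehat{D}(k)$ is exactly what the paper does at the faces $\ssc$, $\bbfz$, $\phi f_0$, and your treatment of $\widehat{D}(k)$ via Lemma~\ref{lemma b-map cons} is correct. But there is a genuine gap at $\zf$. The $\mathscr{H}$--$\mathscr{C}$ splitting, and hence your block parametrix $G_1(k)$, is only defined over the collar $\Umath\times\Umath$, whereas the face $\zf$ is all of $M^2_\phi$ and therefore global. After extending $G_1(k)$ by a cutoff, the remainder $R_1(k)$ does \emph{not} vanish at $\zf$ in the interior of $M\times M$; there it is essentially $\textup{Id}$. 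So your claim that $R_1(k)$ vanishes to positive order at $\zf$ is not justified. The paper handles this by constructing the two leading $\zf$-terms $G_{-2}(\zf)=\Pi_\phi$ and $G_0(\zf)=G_\phi$ directly from the global Fredholm inverse of $\square_\phi$ (this is where Corollary~\ref{cor:grihun fredholm inverse}, Lemma~\ref{lem:ker piphi} and Lemma~\ref{funkana} are used), and then checks that these match the boundary parametrix $Q_1(k)$ at $\zf\cap\bbfz$ and $\zf\cap\phi f_0$. Note in particular that the $k^{-2}$ behavior at $\zf$ (giving $\calE_\zf=-2$ in \eqref{eqn:ind set k phi para}) comes from $\Pi_\phi$, not from anything in the Guillarmou--Hassell construction for $\widehat{A}(k)$; the paper in fact uses the Guillarmou--Hassell model solutions only at $\ssc$ and $\bbfz$, explicitly \emph{not} at $\zf$.

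A second, smaller point: your planned Step~2 (improving at $\lfz$ by solving the indicial equation term by term) is unnecessary here. Assumption~\ref{assum3} forces $\calE(-1)_\lf=\calE(1)_\lf>1$, so \eqref{eqn: lf rf positive} already gives positive index sets at $\lfz$ and $\rfz$ for the remainder once the model terms at $\zf$, $\ssc$, $\bbfz$, $\phi f_0$ are in place. The paper remarks on this explicitly: no construction at the side faces $\lfz$, $\rfz$ is needed.
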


The proof of this result occupies this subsection.
Constructing $G(k)$ requires solving model problems at the various boundary 
hypersurfaces of $M^2_{k,\phi}$, leading to the construction of the leading terms at these faces.
We first find the leading terms at $\zf$ and then at the faces $\ssc$, $\bbf_0$, $\phif_0$ lying over  $\partial M\times\partial M$.  Along the way we check that the constructions match near all intersections, and also with a small parametrix at the diagonal.
\medskip

The construction at zf is global on $M\times M$ and uses Corollary \ref{cor:grihun fredholm inverse}. The construction at the faces $\ssc$, $\bbf_0$, $\phif_0$ closely follows the route taken in Step 1 in the proof of Theorem \ref{thm:split phi-parametrix}, with $\square_\phi$ replaced by $\square_\phi+k^2$ and $M^2_\phi$ by $M^2_{k,\phi}$.
In the intermediate steps of the construction we need the extended calculus as explained after Theorem \ref{thm:split phi-parametrix}. The definition given there carries over to the $(k,\phi)$ because $\Umath^2_{k,\phi}\to V^2_{k,\phi}$ is a bundle with fibres $F^2$, see \eqref{eqn:F2 fibration phi}. The lifting property \eqref{eqn:lifting} also carries over and now reads (leaving out bundles) 
\begin{equation}
 \label{eqn:lifting k phi}
 \beta_{\phi-\ssc}^*: \Psi_{k,\ssc}^{-m}(V) \to \rho_{\phif_0}^m\Psi_{k,\phi}^{-m}(V)+\calA_{k,\phi}^{\calF_m}(V),
 \quad \calA_{k,\ssc}^{\calF}(V) \to \calA_{k,\phi}^{\calF'}(V)
\end{equation}
for any index set  $\calF$ for $M^2_{k,\ssc}$,
where $\calF_{m,\bbf_0}=0$, $\calF_{m,{\phif_0}}=m \,\overline\cup\,(b+1)$, and 
$\calF_{m,f}=\varnothing$ at all other faces $f$, and where $\calF'$ has the same index sets as 
$\calF$ at all faces that already exist on $V^2_{k,\ssc}$, and in addition $\calF'_{\phif_0} = \calF_{\bbf_0}+(b+1)$. Note that, since the boundary fibration for $V$ has point fibres, we have $V^2_{k,\ssc}= V^2_{k,\ssc,\phi}$.
\medskip

\noindent We also need to pull back via $\beta_{k,\phi}':M^2_{k,\phi}\to M^2_\phi \times \R^+$:
\begin{equation}
 \label{eqn:lifting k phi2}
 (\beta_{k,\phi}')^*: C^\infty(\R^{+}_{k},\Psi^{-m}_\phi(M)) \to \Psi^{-m}_{k,\phi}(M)\,.
\end{equation}
This follows from the fact that $\phif_0$ arises as front face of the blow-up of the corner 
$\phif\times\{0\}\subset M^2_\phi\times\R^{+}_{k}$ (see Figure \ref{figure 8}) and the fact that this face is transversal to the diagonal in $M^2_\phi\times\R^+$.
\medskip

In the following construction we denote the leading term of order $m$ at a face $f$ by $G_m(f)$. That is, the resolvent behaves like $\rho_f^m G_m(f) + o(\rho_f^m)$ near the interior of $f$, and similarly if there are several leading terms. Here $\rho_f$ is a defining function for the interior of $f$. We use $k$ as interior defining function for all faces 'at $k=0$' i.e. $\zf$, $\lf_0$, $\bbf_0$, $\phif_0$. 
At $\zf$ we need to construct two leading terms, $G_{-2}(\zf)$ and $G_0(\zf)$.

\subsubsection{Leading terms at $\zf$} \ \medskip
We shall define (a fibred cusp operator)
$$
\square_{\mathrm{c}\phi} := x^{-1}\square_\phi x^{-1}.
$$ From Corollary \ref{cor:grihun fredholm inverse} it follows that the following operators are Fredholm:
\begin{equation}
\label{eqn:square phi cphi maps}
\begin{split}
&\square_\phi: \ H^2_{\mathscr{H}}(M) 
\to L^2(M;\Lambda_\phi M), \quad \textup{if} \ -1 \notin \specb(P_{00}) \\
&\square_{\mathrm{c}\phi}: \ x^2H^2_{\mathscr{H}}(M) 
\to L^2(M;\Lambda_\phi M), \quad \textup{if} \ 0 \notin \specb(P_{00}). 
\end{split}
\end{equation}
We abbreviate $L^2=L^2(M;\Lambda_\phi M)$ and $H^2_{\mathscr{H}} = H^2_{\mathscr{H}}(M)$.
Assume $0\not\in \specb(P_{00})$\footnote{This is of course a consequence of Assumption \ref{assum3}.}. 
Recall that by \eqref{H-split-inclusions} only the
second map in \eqref{eqn:square phi cphi maps} is an operator in $L^2$. Let $G_{\mathrm{c}\phi}$ be the Fredholm inverse of $\square_{\mathrm{c}\phi}$ in $L^2$. Because $\square_{\mathrm{c}\phi}$ is self-adjoint in $L^2$ we have
\begin{equation}\label{cphi-Fredholm}
\square_{\mathrm{c}\phi}G_{\mathrm{c}\phi} = \textup{Id} - \Pi_{\mathrm{c}\phi},
\end{equation}
where $\Pi_{\mathrm{c}\phi}$ is the orthogonal projection in $L^2$ onto the $L^2$-kernel of $\square_{\mathrm{c}\phi}$, which has finite dimension.
Conjugating by $x$ and inserting $x\cdot x^{-1}$ between $\square_{\mathrm{c}\phi}$ and $G_{\mathrm{c}\phi} $ we obtain
\begin{equation}\label{GP1}
\square_{\phi} x^{-1} G_{\mathrm{c}\phi} x^{-1} = \textup{Id} - x \Pi_{\mathrm{c}\phi} x^{-1}.
\end{equation}
This is an identity in $x L^2$, since the operators on the left hand side in \eqref{GP1} map 
$$
xL^2 \xrightarrow{x^{-1} G_{\mathrm{c}\phi} x^{-1} } x \Hsplit^2 \xrightarrow{\square_\phi} xL^2.
$$ 
We now show that \eqref{GP1} is also an identity in $L^2$ (without weight $x$!). 
First, we consider the right hand side of \eqref{GP1}. 
\begin{Lemma} \label{lem:ker piphi}
If  Assumption \ref{assum3} is satisfied, then $x\Pi_{\mathrm{c}\phi}x^{-1}$ extends to a projection%
\footnote{Note that the projection $x\Pi_{\mathrm{c}\phi}x^{-1}$ is not orthogonal.} in $L^2$, and the orthogonal 
projection $\Pi_\phi$ of $L^2$ onto the $L^2$-kernel of $\square_{\phi}$ satisfies
$$
\ker \Pi_\phi = \ker \left( x \Pi_{\mathrm{c}\phi} x^{-1} \right).
$$
\end{Lemma}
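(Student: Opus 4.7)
The plan rests on the algebraic identity $\square_\phi = x\,\square_{c\phi}\,x$, which suggests that multiplication by $x^{\pm 1}$ should interchange the $L^2$-kernels of $\square_{c\phi}$ and $\square_\phi$. A priori this is only a formal bijection, since $x^{-1}v$ lies only in $x^{-1}L^2$ for $v\in L^2$; Assumption \ref{assum3} is exactly what is needed to upgrade it to an honest $L^2$ statement.

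First, I would prove that multiplication by $x^{-1}$ gives a linear isomorphism $\ker_{L^2}\square_{c\phi}\to\ker_{L^2}\square_\phi$. If $v\in\ker_{L^2}\square_{c\phi}$, then $u:=x^{-1}v$ satisfies $\square_\phi u = x\,\square_{c\phi}(xu) = x\,\square_{c\phi}v = 0$ and lies a priori in $x^{-1}L^2$; the no-resonance consequence of Assumption \ref{assum3} supplied by Corollary \ref{cor:kernel} then places $u$ in $L^2$. Conversely, if $u\in\ker_{L^2}\square_\phi$, the same corollary gives $u\in\calA^{\calE(1)-1}_{\phi,\mathscr{H}}(M)$; since Assumption \ref{assum3} forces $\calE(1)_\lf>1$, the function $xu$ decays as $O(x^{1+\varepsilon})$ and is in particular in $L^2$, with $\square_{c\phi}(xu)=x^{-1}\square_\phi u = 0$. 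Writing $K:=\ker_{L^2}\square_{c\phi}$, this yields the finite-dimensional identification $\ker_{L^2}\square_\phi = x^{-1}K$.

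Second, fixing an $L^2$-orthonormal basis $\{\psi_i\}$ of $K$, the first step already shows that $x^{-1}\psi_i\in L^2$ for each $i$, so the pairing $\langle u,\, x^{-1}\psi_i\rangle$ is well defined for every $u\in L^2$. I would then define
\[
\widetilde{\Pi}\,u := \sum_i \langle u,\, x^{-1}\psi_i\rangle\, x\psi_i,
\]
a bounded finite-rank operator on $L^2$ which agrees with $x\,\Pi_{c\phi}\,x^{-1}$ on the dense subspace $xL^2\subset L^2$ and therefore is the natural extension. The projection property $\widetilde{\Pi}^2=\widetilde{\Pi}$ will follow from the biorthogonality identity $\langle x\psi_i,\, x^{-1}\psi_j\rangle = \langle \psi_i,\psi_j\rangle = \delta_{ij}$.

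Finally, the kernel equality follows by direct computation: $\widetilde{\Pi}u = 0$ if and only if $u\perp\mathrm{span}\{x^{-1}\psi_i\} = x^{-1}K = \ker_{L^2}\square_\phi$, which is precisely $\ker \Pi_\phi$ since $\Pi_\phi$ is the orthogonal projection onto that subspace. The main obstacle is the first step: establishing the $L^2$-bijection between the two kernels uses the full strength of Assumption \ref{assum3} via Corollary \ref{cor:kernel}, both to rule out resonances and to guarantee that kernel elements decay strictly faster than $x$; once these two ingredients are in hand, the remaining steps are routine linear algebra.
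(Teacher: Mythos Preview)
Your proof is correct and follows essentially the same route as the paper's: both establish $x^{-1}\ker_{L^2}\square_{c\phi} = \ker_{L^2}\square_\phi$ via Corollary~\ref{cor:kernel}, then use an orthonormal basis of $\ker_{L^2}\square_{c\phi}$ to write the kernel of $x\Pi_{c\phi}x^{-1}$ explicitly and read off both the $L^2$-boundedness and the kernel identity. One minor remark: in the converse direction of your first step, $xu\in L^2$ is automatic since $x$ is bounded on $\overline{M}$, so invoking the decay estimate from Corollary~\ref{cor:kernel} there is unnecessary.
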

\begin{proof}
 We have by Assumption \ref{assum3}
\begin{equation}
 \label{eqn l2 xl2}
 \ker_{L^2} \square_{\mathrm{c}\phi}  = \ker_{L^2} \square_\phi x^{-1} = x\ker_{x^{-1}L^2} \square_\phi = x\ker_{L^2}\square_\phi
\end{equation}
where the last equality is by Corollary \ref{cor:kernel} that follows from Assumption \ref{assum3}.
In particular, $\ker_{L^2} \square_{\mathrm{c}\phi}\subset xL^2$. Writing the Schwartz kernel of $\Pi_{\mathrm{c}\phi}$ as $\sum_{j=1}^N \phi_j\otimes\overline\phi_j$ for an orthonormal basis $(\phi_j)$ of $\ker_{L^2} \square_{\mathrm{c}\phi}$, we see that the Schwartz kernel of $x\Pi_{\mathrm{c}\phi}x^{-1}$ equals $\sum_{j=1}^N (x\phi_j)\otimes(x^{-1}\overline\phi_j)$, and from $x^{-1}\phi_j\in L^2$ we conclude that this extends to an operator on $L^2$, and that (for $u \in L^2$)
$$ u\in \ker \left( x \Pi_{\mathrm{c}\phi} x^{-1} \right) \iff u \perp x^{-1}\ker_{L^2} \square_{\mathrm{c}\phi}.$$
Now \eqref{eqn l2 xl2} gives $ x^{-1}\ker_{L^2} \square_{\mathrm{c}\phi}=\ker_{L^2}\square_\phi$, and the claim follows from $\ker_{L^2}\square_\phi = \Ran\Pi_\phi=(\ker\Pi_\phi)^\perp$ by the following sequence of identities
$$
\ker \Pi_\phi = \left( \ker_{L^2} \square_\phi \right)^\perp
= \left( x^{-1} \ker_{L^2} \square_{\mathrm{c}\phi} \right)^\perp = \ker \left(x \Pi_{\mathrm{c}\phi} x^{-1}\right).
$$
\end{proof}
Next, we consider the left hand side of \eqref{GP1}. 
Since $x^{-1}G_{\mathrm{c}\phi}x^{-1}$ is a parametrix of $\square_\phi$ in $xL^2$ with finite rank remainder, the argument in Corollary \ref{cor:grihun fredholm inverse}, with $\alpha=0$, shows that its index set at $\rf$ is $\calE(0)_\rf - 1$. Now  $x^{-1}G_{\mathrm{c}\phi}x^{-1}$ extends to $L^2$ iff its index set at $\rf$ is positive, and by \eqref{eqn:index sets} this is equivalent to
\begin{equation}
\label{eqn:-1,0 condition} 
 [-1,0]\cap \specb(P_{00})=\varnothing.
\end{equation}
Thus by Assumption \ref{assum3}, \eqref{GP1} is indeed an identity on $L^2$.
In view of \eqref{GP1} as an identity on $L^2$ and Lemma \ref{lem:ker piphi} we can obtain a formula for a Fredholm inverse of 
$\square_\phi$, using the following  simple functional analytic result, relating projections and orthogonal projections. 

\begin{Lemma}\label{funkana}
Let $H_1$ and $H_2$ be Hilbert spaces, $P:H_1\to H_2$ and $G:H_2 \to H_1$ be operators
such that $P\circ G=\textup{Id} - \Pi$ for a continuous projection $\Pi$ in $H_2$. Then the operator $\textup{Id} - \Pi + \Pi^*$ in $H_2$ is invertible, and the 
orthogonal 
projection $\Pi_o$ in $H_2$ with $\ker \Pi = \ker \Pi_o$ is given by
\begin{equation}\label{Po}
\Pi_o = \Pi^* \circ (\textup{Id} - \Pi + \Pi^*)^{-1}.
\end{equation}
Moreover, setting $G_o:= G \circ (\textup{Id} - \Pi + \Pi^*)^{-1}$, we have
\begin{equation}\label{Go}
P\circ G_o=\textup{Id} - \Pi_o.
\end{equation}
\end{Lemma}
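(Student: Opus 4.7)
Set $A := \textup{Id} - \Pi + \Pi^*$. My plan is to carry out three steps: prove invertibility of $A$; identify $\Pi_o := \Pi^* A^{-1}$ as the orthogonal projection onto $(\ker\Pi)^\perp$; and then derive $PG_o = \textup{Id} - \Pi_o$ by a short algebraic manipulation.

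For invertibility, I would exploit that $\Pi^* - \Pi$ is skew-adjoint, so $\langle u, (\Pi^*-\Pi)u\rangle$ is purely imaginary for every $u\in H_2$. Expanding then yields the energy estimate
$$
\|Au\|^2 = \|u\|^2 + \|(\Pi^*-\Pi)u\|^2 \geq \|u\|^2,
$$
so $A$ is bounded below, hence injective with closed range. The same estimate applied to $A^* = \textup{Id} - \Pi^* + \Pi$, which has the analogous form, shows $A^*$ is injective, and therefore $A$ is surjective as well.

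For the projection formula, let $K := \ker\Pi$ and $R := \Ran\,\Pi$, so that $\ker\Pi^* = R^\perp$ and $\Ran\,\Pi^* = K^\perp$. The structural observation is that $A$ splits as a block map along the (non-orthogonal) direct sum $H_2 = R \oplus R^\perp$: on $R$ it acts as $\Pi^*|_R : R \to K^\perp$ (since $\Pi v_R = v_R$ gives $Av_R = \Pi^*v_R$), while on $R^\perp$ it acts as $(\textup{Id}-\Pi)|_{R^\perp}: R^\perp \to K$ (since $\Pi^* v = 0$ gives $Av = (\textup{Id}-\Pi)v \in K$). Invertibility of $A$ forces both restrictions to be bijections, so $A^{-1}$ maps $K^\perp \to R$ and $K \to R^\perp$. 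Composing with $\Pi^*$, which equals $A|_R$ on $R$ and vanishes on $R^\perp$, then gives $\Pi_o v = v$ for $v\in K^\perp$ and $\Pi_o w = 0$ for $w\in K$, identifying $\Pi_o$ as the orthogonal projection onto $K^\perp$.

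The final step is a one-line algebraic manipulation: $PG_o = PGA^{-1} = (\textup{Id}-\Pi)A^{-1}$, which equals $\textup{Id} - \Pi^*A^{-1} = \textup{Id} - \Pi_o$ because $A - \Pi^* = \textup{Id} - \Pi$ by the very definition of $A$. The main obstacle is really the middle step: once one recognises that $A$ interchanges the direct sum $H_2 = R \oplus R^\perp$ with the orthogonal sum $H_2 = K^\perp \oplus K$, the formula for $\Pi_o$ becomes transparent and the rest of the lemma follows mechanically.
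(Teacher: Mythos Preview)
Your proof is correct, and the final step is identical to the paper's computation. The first two steps, however, take a somewhat different path.

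For invertibility, the paper simply observes that $\Pi-\Pi^*$ is skew-adjoint, hence has purely imaginary spectrum, so $\textup{Id}-(\Pi-\Pi^*)$ is invertible. Your energy estimate $\|Au\|^2=\|u\|^2+\|(\Pi^*-\Pi)u\|^2$ is a concrete quantitative version of the same idea; it gives an explicit lower bound $\|A^{-1}\|\le 1$, which the spectral argument does not immediately yield.

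For the projection formula, the paper argues in the opposite direction: it takes the orthogonal projection $\Pi_o$ as given, observes $\Pi_o(\textup{Id}-\Pi)=0$ (since $\ker\Pi=\Ran(\textup{Id}-\Pi)$) and $\Pi_o\Pi^*=\Pi^*$ (since $\Ran\Pi^*=(\ker\Pi)^\perp$), adds to get $\Pi_oA=\Pi^*$, and then inverts $A$. Your block-decomposition approach is more constructive: you show directly that $A$ intertwines the orthogonal splitting $R\oplus R^\perp$ on the domain with $K^\perp\oplus K$ on the target, and then read off that $\Pi^*A^{-1}$ acts as the identity on $K^\perp$ and zero on $K$. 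This is a nice structural picture and makes clear why the formula works, at the cost of a bit more bookkeeping. One small slip: the decomposition $H_2=R\oplus R^\perp$ you invoke is of course orthogonal, not non-orthogonal as you wrote; perhaps you were thinking of $H_2=K\oplus R$ (the decomposition associated to the idempotent $\Pi$), but in any case the argument stands as written.
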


\begin{proof}
Since $\Pi$ is a projection, $\ker \Pi = \textup{Ran} \, (\textup{Id} - \Pi)$.
The property $\ker \Pi = \ker \Pi_o$ implies that $\Pi_o$ also vanishes on 
$\textup{Ran} \, (\textup{Id} - \Pi)$, so 
$$\Pi_o \circ (\textup{Id} - \Pi) = 0. 
$$
Next, $\Pi_o$ is the identity on the orthogonal complement $(\ker \Pi_o)^\perp = (\ker \Pi)^\perp = 
\textup{Ran} \, \Pi^*$. Thus $\Pi_o \circ \Pi^* = \Pi^*$. Adding this to the property
$\Pi_o \circ (\textup{Id} - \Pi) = 0$ above, we conclude
$$
\Pi_o \circ (\textup{Id} - \Pi + \Pi^*) = \Pi^*.
$$
The operator $\Pi - \Pi^*$ is skew-adjoint, thus has purely imaginary spectrum. 
Consequently, $\textup{Id} - \Pi + \Pi^*$ is invertible and \eqref{Po} follows.
Finally, \eqref{Go} follows by a straightforward computation with $S= \textup{Id} - \Pi + \Pi^*$:
$$
P\circ G_o=P \circ G \circ S^{-1} = 
\left( \textup{Id} - \Pi\right) \circ S^{-1}
= \left( S - \Pi^*\right) \circ S^{-1} = \textup{Id} - \Pi^* \circ S^{-1}.
$$
\end{proof}

\noindent We apply Lemma \ref{funkana} by setting (in the notation therein)
$$
P=\square_\phi, \ G=x^{-1}G_{\mathrm{c}\phi}x^{-1}, \ \Pi=x\Pi_{\mathrm{c}\phi}x^{-1},
H_1=\Hsplit^2, H_2=L^2.
$$ 
We then obtain a Fredholm inverse $G_o$, which we denote $G_\phi$, to $\square_\phi$ by setting (note $\Pi_{\mathrm{c}\phi}^*=\Pi_{\mathrm{c}\phi}$)
\begin{equation}\label{Gphi}
G_\phi:= x^{-1} G_{\mathrm{c}\phi} x^{-1} \circ (\textup{Id} - x \Pi_{\mathrm{c}\phi} x^{-1} + x^{-1} \Pi_{\mathrm{c}\phi} x)^{-1}.
\end{equation}

\begin{Rem}
Formulas like \eqref{Po} have appeared in the literature before, e.g. in \cite[Lemma 3.5]{BLZ09}.
Our functional analytic approach is different from the 
approach by \cite{guillarmou2014low} and \cite{guillarmou2008resolvent}, where 
the Fredholm para\-metrix is obtained by algebraic computation with bases of 
$\ker_{L^2} \square_{\phi}$.
\end{Rem}

\noindent We can now define the two leading terms of the 
resolvent parametrix at zf as
\begin{equation}\label{Gkphi}
G_{-2}(\zf) = \Pi_\phi\,,\quad G_0(\zf) = G_\phi\,.
\end{equation}
Note that $\square_{\phi} G_{\phi} = \textup{Id} -  \Pi_{\phi}$ and $\square_\phi \Pi_\phi=0$ imply that this defines indeed a parametrix near $\zf$:
\begin{equation}
\label{eqn:zf param eqn} 
 (\square_\phi+k^2)(k^{-2} \Pi_\phi + G_\phi) = \textup{Id} + O(k^2)\,.
\end{equation}

\subsubsection{Leading terms at $\ssc$, $\bbf_0$, $\phif_0$} \ \medskip
In this step we will construct a parametrix
\begin{equation}
 \label{eqn:Q_1 k phi para}
Q_1\in \Psibar_{k,\phi,\mathscr{H}}^{-2,\calE'} (M),
\end{equation}
by defining it near the boundary faces $\ssc$, $\bbf_0$, $\phif_0$ of $M^2_{k,\phi}$ and extending it to $M^2_{k,\phi}$ using a cutoff function, 
with remainder term vanishing at the boundary, see \eqref{eqn:R1 k phi para} below for the precise
statement. Here $\calE'$ coincides with the index set $\calE$ in 
\eqref{eqn:ind set k phi para} except at $\zf$, where $\calE'_\zf=0$.
We follow Step 1 in the proof of Theorem \ref{thm:split phi-parametrix}, 
replacing $\square_\phi$ by $\square_\phi+k^2$ and taking the weight $\alpha=-1$. 
That is, with $\square_\phi$ written as in \eqref{eqn:square_phi-prelim} and in the 
notation of the proof of that theorem we now set 
$$
A=xP_{00}x+k^2, \ B=xP_{01}x, \ C=xP_{10}x, \ D=P_{11}+k^2.
$$
\underline{\textit{Diagonal terms in \eqref{Q1}}:} \medskip

\noindent Comparing \eqref{P00-sc}, \eqref{P00-tangential-sc} with \eqref{indicial-operator-2}, \eqref{LL}
shows in view of \eqref{P-square-relation} that we can apply the 
resolvent construction of Guillarmou and Hassell to find a parametrix $Q_{00,k}$ for $A=xP_{00}x+k^2$,
but only near the boundary, i.e. on $V^2_{k,\ssc}$. That is, we use the solutions of the model 
problems at the diagonal, at $\ssc$ and $\bbf_0$ (but not at $\zf$) to obtain
\begin{equation}
\begin{split}
&(xP_{00}x+k^2)Q_{00,k} = \textup{Id}-R_{00,k},\\
&Q_{00,k}\in\Psi_{k,\ssc}^{-2,(-2,0,0),\calE_0}(V,\Hmath), \
R_{00,k}\in \rho_{\bbf_0} \rho_\ssc\calA_{k,\ssc}^{\calE_0}(V,\Hmath),
\end{split}
\end{equation}
where $\calE_0$ is the index set $\calE$ but without the $\phif_0$ part.
By \eqref{eqn:lifting k phi} $Q_{00,k}$ lifts to an element of $\Psi^{-2,\calE}_{k,\phi}(V,\Hmath)\subset\Psibar^{-2,\calE}_{k,\phi}(\U)$ (we leave out the bundle $\Lambda_\phi M$ from notation in this proof).
In the notation of \eqref{Q1} we set $\widehat{A}= Q_{00,k}$ and $R = R_{00,k}$.\medskip

Next we construct a parametrix for $D=P_{11}+k^2$.
As discussed after \eqref{eqn:square phi normal op}, the normal operator of $P_{11}=\Piperp\square_\phi\Piperp$ is invertible, and by  \cite[Proposition 2.3]{grieser-para} its inverse lies in the extended suspended calculus of $M^2_\phi$. The argument given there shows that the same holds for $P_{11}+k^2$, with smooth dependence on $k\geq0$. So the arguments of loc.\ cit.\ apply to give  $Q_{11,k} \in\Cinf(\R_{k}^+,\Psibar^{-2}_\phi(\U))$, $R_{11,k}\in\Cinf(\R_{k}^+,\calA^{\varnothing}_\phi(\U))$, satisfying $(P_{11}+k^2)Q_{11,k}=\textup{Id} - R_{11,k}$ as an identity in $\Cinf(V\times(0,\infty),\Cmath)$.
Pulling these operators back under the map $\beta'_{\phi} : \U^{2}_{k,\phi} \longrightarrow \U^{2}_{\phi}\times \R^+$ we obtain by the extended analogue of \eqref{eqn:lifting k phi2}
$$ Q_{11,k}\in \Psibar^{-2}_{k,\phi}(\U),\ R_{11,k} \in \calA^{\calF}_{k,\phi}(\U)$$ 
where $\calF_\zf=0$ and $\calF_f=\varnothing$ at all other faces $f$. \smallskip

\noindent \underline{\textit{Off-diagonal terms in \eqref{Q1}}:} \medskip

\noindent With $\widehat{A}=Q_{00,k}$ and $\widehat{D}=Q_{11,k}$ we get
$B'=xP_{01}x Q_{11,k}\in x^2\Psibar^{0}_{k,\phi}(\U)=\Psibar^{0}_{k,\phi}(\U) x^2$ and $C'=xP_{10}x Q_{00,k} \in x^2\Psibar_{k,\phi}^{0,\calE}(\U)$. The extra $x^2$ factors in $B',C'$  give $Q_1\in \Psibar_{k,\phi,\mathscr{H}}^{-2,\calE}(\U)$.
\medskip

\noindent \underline{\textit{Remainder}:}
\medskip

\noindent The analysis of the remainders is analogous to the proof of Theorem \ref{thm:split phi-parametrix}. For index families we use a notation analogous to \eqref{eqn:index set tuples}:
\begin{equation}
 \label{eqn:index set tuples k phi}
\begin{split}
&\calF = (a,b,c,l+\lambda,r+\rho) :\iff \\
&\calF_{\bbf_0}\geq a,\ \calF_{\phif_0}\geq b,\ \calF_\ssc\geq c,\calF_\lfz=\calE(0)_\lf+\lambda,\ \calF_\rfz=\calE(0)_\rf+\rho
\end{split}
\end{equation}
and $\calF_\zf=0$ and all other index sets empty.
By \eqref{eqn:lifting k phi} $R_{00,k}$ lifts to $\calA_{k,\phi}^{(1,4,1,l+1,r-1)}(\U)$.
Then $R_{00,k}B'\in\calA_{k,\phi}^{(3,6,3,l+1,r+1)}(\U)$ and $R_{11,k}C' \in \calA_{k,\phi}^{(\varnothing,\varnothing,\varnothing,\varnothing,r-1)}(\U)$. Also, 
\begin{align*}
&B'C'\in x^4\Psibar_{k,\phi}^{0,\calE}(\U)=
\rho_{\phif_0}^4\rho_\ssc^4 \Psibar_{k,\phi}^{0,(2,0,0,l+3,r-1)}(\U), \\
&C'B'\in x^2\Psibar_{k,\phi}^{0,\calE}(\U)x^2 = \rho_{\phif_0}^4
\rho_\ssc^4\Psibar_{k,\phi}^{0,(2,0,0,l+1,r+1)}(\U).
\end{align*}
In summary we get 
$(\square_\phi+k^2)Q_1=\textup{Id} - R_1$ where 
$R_1\in\rho_{\phif_0}^4\rho_\ssc^4\Psibar_{k,\phi}^{0,\calR_1}(\U)$ with
\begin{equation}
\label{eqn:R1 k phi para}
 \calR_1 = 
\begin{pmatrix}
 (1,0,0,l+1,r-1) & (3,2,-1,l+1,r+1) \\
 (\varnothing,\varnothing,\varnothing,\varnothing,r-1) & (2,0,0,l+1,r+1)
\end{pmatrix}
\end{equation}
This has positive index sets at $\bbf_0,\ssc,\phif_0$.
\medskip

\noindent \underline{\emph{Matching at the intersections of boundary faces}:}
\medskip

\noindent The terms at $\ssc,\bbfz, \phif_0$ match between each other and the faces $\bbf, \lf, \rf$ by construction. 
We now show that the terms at $\bbfz$ and $\phif_0$ also match with the leading terms at $\zf$. \medskip

First, 
the coefficient of the $k^{-2}$ term is the orthogonal projection $\Pi_\phi$ to  $K=\ker_{L^2}\square_\phi$, whose integral kernel is $\sum_{i=1}^N \psi_i \otimes\overline\psi_i$ (times b-half densities) for an orthonormal basis $(\psi_i)$ of $K$. By Corollary \ref{cor:kernel}, $\psi_i\in \calA^F_{\mathscr{H}}(M)$ for an index set $F>0$, and this implies
$\Pi_\phi\in\calA^\calF_{\phi,\mathscr{H}}(M)$ with $\calF$ positive at all faces (and even with an index set $>b+1$ at $\phif$ by \eqref{eqn:densities M2phi}). Since the leading orders at $\bbf_0$ and $\phif_0$ in $\calE$ are $-2$, $0$ respectively, it follows that $k^{-2}\Pi_\phi=(\kappa')^{-2}(x')^{-2}\Pi_\phi$ (recall from \eqref{right corner} that $\kappa'= k/x'$) is lower order than the leading terms of $G_1(k)$ at these faces, in each of the components of the $\mathscr{H}-\Cmath$ decomposition.
\medskip

Next, the $k^0$ coefficient at $\zf$ is $G_\phi$ in \eqref{Gphi}. The fact that $G_{\mathrm{c}\phi}$ is a Fredholm inverse of $\square_{\mathrm{c}\phi}$ implies as in the proof of Corollary \ref{cor:grihun fredholm inverse} that $G_{\mathrm{c}\phi}$ is pseudodifferential, and more precisely that $x^{-1}G_{\mathrm{c}\phi}x^{-1}\in \Psi^{-2,\calE_0}_{\phi, \mathscr{H}}(M)$ with $\calE_0$ the index set for $\zf$ induced by $\calE$ in \eqref{eqn:ind set k phi para}, and has the same leading terms at the intersection with $\bbf_0$ and $\phif_0$ as  $Q_1(k)$ restricted to $k=0$, in each component of the $\mathscr{H}-\Cmath$ decomposition.
Also, the terms $S=x \Pi_{\mathrm{c}\phi} x^{-1} + x^{-1} \Pi_{\mathrm{c}\phi} x$ vanish  at the boundary as in the argument above for $\Pi_\phi$, so $(\textup{Id}+S)^{-1}=\textup{Id}+S'$ where $S'$ has the same vanishing orders by standard arguments, so the factor $(\textup{Id}+S)^{-1}$ does not change the leading term.

\subsubsection{Singularity at the diagonal} 
The fact that  the distribution kernel of $\square_\phi+k^2$ has a delta type singularity on $\textup{diag}_{k,\phi}$, uniformly and non-vanishing at the boundary, means that its $(k,\phi)$-principal symbol, which is an endomorphism of $N^*\textup{diag}_{k,\phi}$, is uniformly invertible. By inverting this symbol and applying the standard parametrix construction one obtains a 'small' parametrix $Q_\triangle$ satisfying
$$ (\square_\phi+k^2)Q_\triangle = \textup{Id} - R_\triangle,\quad 
Q_{\triangle}\in \Psi_{k,\phi}^{-2}(M),\ 
R_{\triangle}\in \Psi_{k,\phi}^{-\infty}(M).
$$

The same argument as in the proof of Theorem \ref{thm:split phi-parametrix} shows two things: that the parametrix constructed so far, which was only in the extended $\phi$-calculus near the boundary, is actually in the $\phi$-calculus itself, and that $Q_\Delta$ can be adjusted to match with the parametrices at the faces hit by the diagonal.

\subsubsection{Remainder term of initial parametrix}
We choose our initial parametrix $G(k)$ to be an element of $\Psi^{-2,\calE}_{k,\phi,\mathscr{H}}(M)$ which matches the models at $\zf$, $\ssc$, $\bbf_0$, $\phif_0$ as explained above. We now analyze the term $R(k)$ in $(\square_\phi+k^2)G(k) = \textup{Id} - R(k)$ and in particular track the different terms in the $\mathscr{H}-\Cmath$ decomposition. 
\medskip

First, consider $\lf_0$ near its intersection with $\zf$, away from $\bbf_0$. Here 
$$
G(k) = k^{-2}G_{-2}(\zf) + G_0(\zf) + \Gtilde,
$$ 
where $\Gtilde$ has index sets $\mathcal{L},1$ at $\lfz$, $\zf$, respectively, with $\mathcal{L}=
\begin{pmatrix}
 \calE_\lfz \\ \calE_\lfz+2
\end{pmatrix}$. Since $\kappa$ (recall $\kappa=k/x$ as defined in \eqref{right corner}) defines $\zf$ near $\zf \, \cap \, \lfz$, this means $\Gtilde=\kappa G'$ with $G'$ having index sets $\mathcal{L},1$ at $\lfz$, $\zf$, respectively. Then by \eqref{eqn:zf param eqn}
$$
(\square_\phi+k^2)G(k) = \textup{Id}+k^2G_0(\zf) + (\square_\phi+k^2)(\kappa G').
$$ 
The main term here is $\square_\phi\kappa G' = k \, \square_\phi k^{-1}\kappa G' = \kappa x\, \square_\phi x^{-1} G'$. Now $x\, \square_\phi x^{-1}$ has the same structure \eqref{eqn:square_phi-prelim} as $\square_\phi$, and $x$ defines $\lfz$, so applying it to $G'$ yields index set
\begin{equation}
\label{eqn:tropical product} 
\begin{pmatrix}
 2 & 2 \\ 2 & 0
\end{pmatrix}
\otimes
\begin{pmatrix}
  \calE_\lfz \\ \calE_\lfz+2
\end{pmatrix}
= 
\begin{pmatrix}
  \calE_\lfz +2\\ \calE_\lfz+2
\end{pmatrix}
\end{equation}
where $\otimes$ is the \emph{tropical} matrix product, which is the usual matrix product with $+$ replaced by $\cup$ (respectively replacing $+$ by '$\min$' for lower bounds on index sets) and $\cdot$ by $+$. \medskip

The index set of $\kappa G'$, and hence of $R(k)$, at $\rfz$ is the same as that of $G'$ since $P$ does not act on the local defining function $x'$ of $\rfz$, so it is $
\begin{pmatrix}
 \calE_\rfz & \calE_\rfz+2
\end{pmatrix}
$.
Now \eqref{eqn: lf rf positive} implies that $R(k)\in\calA^{\calR}_{k,\phi,\mathscr{H}}(M)$ near $(\lf_0\cup\rfz)\cap\zf$ with $\calR_\lf,\calR_\rf$ positive. Similar arguments and \eqref{eqn:R1 k phi para}  show that 
$R(k)\in\Psi^{-\infty,(1,1,1),\calR}_{k,\phi,\mathscr{H}}(M)$ with $\calR$ positive at all faces.
This finished the proof of Theorem \ref{initial resl phi}.

\begin{Rem}
 Our Assumption \ref{assum3} implies that the remainder term, after constructing model solutions at $\zf$, $\ssc$, $\bbfz$, $\phif_0$, is positive at all faces, including $\lfz$ and $\rfz$. For this reason, and since we do not require finer information for the Riesz transform, we do not need a construction of leading terms at these side faces as in the works by 
 Guillarmou, Hassell and Sher. 
\end{Rem}

\subsection{Statement and proof of the main result} \label{section 9}

Now we have all tools in place to finish our microlocal construction of the resolvent 
for the Hodge Laplacian $\Delta_{\phi}$ on $\phi$-manifolds 
at low energy. Recall that we work under the rescaling \eqref{T} and thus $\Delta_{\phi}$
is replaced with the unitarily equivalent operator $\square_{\phi}$ acting in $L^2(M;\Lambda_\phi M)$.
We recall the initial parametrix $Q(k)$ for $(\square_{\phi} + k^{2})$, 
constructed in Theorem \ref{initial resl phi}
$$
(\square_{\phi} + k^{2}) Q(k) = \textup{I} - R(k).
$$
In order to invert the right hand side we begin with a lemma that is parallel to 
\cite[Corollary 2.11]{guillarmou2008resolvent}.

\begin{Lemma} \label{hilbert}
For $N > \dim M$, $R(k)^{N}$ is Hilbert-Schmidt for
each $k > 0$, with Hilbert-Schmidt norm
\begin{equation*}
\lVert R(k)^{N} \rVert_{\text{HS}} \longrightarrow 0,
\ \textup{as} \ k \longrightarrow 0.
\end{equation*}
\end{Lemma}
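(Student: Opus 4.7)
The plan is close to that of \cite[Corollary 2.11]{guillarmou2008resolvent}. By Theorem \ref{initial resl phi}, $R(k)\in\Psi_{k,\phi,\calH}^{-\infty,(1,1,1),\calR}(M)$ with $\calR$ empty at $\bbf,\lf,\rf$ and positive at every other face of $M^2_{k,\phi}$. Iteratively applying the composition theorem of Section \ref{section 6} would yield
\begin{equation*}
R(k)^N\in\Psi_{k,\phi,\calH}^{-\infty,(N,N,N),\calR^{(N)}}(M),
\end{equation*}
with $\calR^{(N)}$ still empty at $\bbf,\lf,\rf$ and positive at every other face, with leading orders growing linearly in $N$. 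This uses that the weight factors at $\phi f_0,\zf,\ssc$ add under composition and that iterated extended unions of positive index families remain positive.

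To verify Hilbert--Schmidtness at a fixed $k_0>0$, I would restrict to the slice $\{k=k_0\}\cong M^2_\phi$, on which $R(k_0)^N$ is a smooth half-density vanishing to infinite order at $\lf,\rf,\bbf$ and to order $\geq N$ at $\phi f$ (the trace of $\ssc$). Using \eqref{eqn:operators half-densities} together with \eqref{eqn:densities M2phi} to convert from the $\Omega^{1/2}_{b\phi}$ normalization to the kernel-as-function $K$ against $\dvol_b\otimes\dvol_b$, one obtains $|K|^2\,\dvol_b\otimes\dvol_b\sim \rho_{\phi f}^{2N-b-2}\cdot(\text{smooth density})$ near $\phi f$, which is integrable as soon as $N>(b+1)/2$; the infinite-order vanishing at $\lf,\rf,\bbf$ disposes of the other slice-faces.

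For the decay as $k\to 0$, I would note that the projection $\pi\colon M^2_{k,\phi}\to[0,\infty)_k$ is a b-fibration and that $k\mapsto \|R(k)^N\|_{HS}^2$ is precisely the pushforward under $\pi$ of the density $|R(k)^N|^2\cdot\beta_{k,\phi}^*(\dvol_b\otimes\dvol_b)$ on $M^2_{k,\phi}$. By Proposition \ref{pushforward} this pushforward is polyhomogeneous on $[0,\infty)$ and has strictly positive leading order at $k=0$, provided the integrand has strictly positive index set at each face lying over $\{0\}$, namely $\zf,\bbfz,\lfz,\rfz,\phi f_0$. The hypothesis $N>\dim M$ is exactly what guarantees that the vanishing orders of $R(k)^N$ dominate the $\rho^{-(b+1)}$ factors arising from the $\Omega_{b\phi}$ normalization uniformly in every corner chart of $M^2_{k,\phi}$, and hence yields strict positivity at each of those faces.

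The main obstacle is the careful bookkeeping of the index shifts and density factors in the corner charts where several $k=0$ faces of $M^2_{k,\phi}$ meet, such as $\bbfz\cap\phi f_0\cap\ssc$ near the fibre diagonal at $k=0$; these computations parallel those carried out in Sections \ref{subsec:bup fibre diag} and \ref{subsec:intl param} and make use of the projective coordinates \eqref{top corner}--\eqref{bf-sc-coord}.
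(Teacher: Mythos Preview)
Your approach is correct and essentially the same as the paper's: both use the composition theorem from Section~\ref{section 6} to show that the index sets of $R(k)^N$ are bounded below by a quantity growing linearly in $N$, then deduce Hilbert--Schmidtness at fixed $k>0$ from square-integrability of the kernel on the slice $M^2_\phi$, and the decay as $k\to0$ from positive vanishing order at all $k=0$ faces. The paper's argument is more compressed---it simply observes that the kernel vanishes at $\zf,\lfz,\rfz$ and is continuous across the diagonal once $N>\dim M$---whereas you spell out the density conversion from $\Omega^{1/2}_{b\phi}$ and invoke the pushforward theorem explicitly for the $k\to0$ limit; this extra detail is sound but not needed, and your claim that $N>\dim M$ is ``exactly'' the threshold for dominating the $\rho^{-(b+1)}$ factors is slightly imprecise (any sufficiently large $N$ works, and in fact $R(k)$ is already smoothing by Theorem~\ref{initial resl phi}, so the diagonal singularity is not the issue).
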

 
\begin{proof}
Since the Schwartz kernel for the error term $R(k)$ is a polyhomogeneous 
conormal distribution when lifted to $M^2_{k,\phi}$, vanishing to positive order at all boundary faces, 
there exists a positive lower bound $\varepsilon > 0$ for all its index sets.
Then the Composition Theorem \ref{compositionth} implies that $R(k)^N$ has index sets that are bounded below by 
$N\varepsilon>0$. Since the order of $R(k)$ as a pseudo-differential operator is $(-1)$, the order of the conormal singularity of $R(k)^N$ is $(-N)<-\dim M$, so its Schwartz kernel is continuous across the lifted diagonal.
Thus $R(k)^N$ is a Hilbert Schmidt operator in $L^2(M;\Lambda_\phi M)$.
 Finally, its Hilbert-Schmidt norm tends to zero as $k\to0$ since its Schwartz kernel vanishes at the $k=0$ faces  zf, $\lf_0$ and $\rf_0$.
\end{proof}

Thus $\textup{I} - R(k)$ is invertible as an operator in $L^2(M;\Lambda_\phi M)$ for $k>0$ sufficiently small.
We can now state and prove our main theorem.

\begin{theorem}[Main Theorem] \label{thm:main thm detail}
The resolvent $(\square_{\phi} + k^{2})^{-1}$ is an element of the split $(k,\phi)$-calculus
$\Psi_{k, \phi,\calH}^{-2,\calE}(M)$, defined in Definition \ref{def:split k phi calculus}, where the individual index sets satisfy
\begin{equation}\label{lower-bounds-index-sets}
\mathcal{E}_{\textup{sc}} \geq 0,\quad
 \mathcal{E}_{\phif_0} \geq 0, \quad 
 \mathcal{E}_{\textup{bf}_0} \geq -2, \quad \mathcal{E}_{\lf_0}, \mathcal{E}_{\rf_0} >0, 
\quad \mathcal{E}_{\textup{zf}} \geq -2.
\end{equation}
The leading terms at $\ssc$, $\phif_0$, $\bbfz$ and $\zf$ are of orders $0,0,-2,-2$, respectively, and are given by the constructions in Section \ref{subsec:intl param}.
\end{theorem}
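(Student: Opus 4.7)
The plan is to improve the initial parametrix $Q(k)$ of Theorem \ref{initial resl phi} by a Neumann-series argument and then identify the resulting asymptotic parametrix with the genuine resolvent. Recall that $(\square_\phi + k^2)\,Q(k) = \textup{Id} - R(k)$ with $R(k)\in \Psi^{-\infty,(1,1,1),\calR}_{k,\phi,\mathscr{H}}(M)$ and $\calR$ positive at every boundary face of $M^{2}_{k,\phi}$. Lemma \ref{hilbert} supplies the operator-theoretic invertibility of $\textup{Id} - R(k)$ on $L^2(M;\Lambda_\phi M)$ for all sufficiently small $k>0$, so the exact inverse exists; the task is to place it inside the split $(k,\phi)$-calculus.

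First I would sum the Neumann series $\sum_{j\geq 1} R(k)^j$ asymptotically inside the calculus. By iterating the composition theorem \ref{compositionth}, the $j$-th power $R(k)^j$ lies in $\calA^{\calR_j}_{k,\phi,\mathscr{H}}(M)$ with index sets $\calR_j$ positive and growing linearly in $j$ at every face. A standard Borel-type asymptotic summation then produces $S(k) \in \calA^{\calR'}_{k,\phi,\mathscr{H}}(M)$, with $\calR'$ positive at every face, such that
\begin{equation*}
(\textup{Id} - R(k))(\textup{Id} + S(k)) = \textup{Id} - R_\infty(k),
\end{equation*}
where $R_\infty(k)$ vanishes to infinite order at every boundary hypersurface of $M^{2}_{k,\phi}$. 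Setting $G_\infty(k) := Q(k)(\textup{Id} + S(k))$ and composing once more yields $G_\infty(k) \in \Psi^{-2,\calE}_{k,\phi,\mathscr{H}}(M)$ with the index bounds \eqref{lower-bounds-index-sets}. Because $S(k)$ contributes only subleading terms at $\ssc$, $\phi f_0$, $\bbf_0$ and $\zf$, the leading asymptotics of $G_\infty(k)$ at these faces coincide with those of $Q(k)$.

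The final step is to identify $G_\infty(k)$ with the true resolvent. Since $(\square_\phi + k^2)^{-1}$ exists for $k>0$, applying it to $(\square_\phi + k^2)\,G_\infty(k) = \textup{Id} - R_\infty(k)$ gives
\begin{equation*}
(\square_\phi + k^2)^{-1} - G_\infty(k) = (\square_\phi + k^2)^{-1}\, R_\infty(k).
\end{equation*}
Using weighted mapping properties of $G_\infty$, together with the infinite-order vanishing of $R_\infty$ at every face, the right-hand side is a smoothing operator whose Schwartz kernel vanishes to infinite order at every boundary face of $M^{2}_{k,\phi}$. It is therefore absorbed into the split calculus without altering the index bounds, and the leading terms of the resolvent are those of $Q(k)$.

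The hard part will be the composition theorem itself: tracking the split structure of fibrewise-harmonic and perpendicular components through iterated compositions on $M^{2}_{k,\phi}$, especially at the interior corner faces $\phi f_0$ and $\bbf_0$, where the matrix block structure inherited from Definition \ref{def:split phi calc} interacts delicately with the lifted diagonals and with the $k^{-2}$ behaviour coming from the $\zf$ contribution to $Q(k)$. This analysis is carried out in Section \ref{section 6} and is what ensures that the Neumann iteration remains inside the split $(k,\phi)$-calculus rather than leaking into a larger space.
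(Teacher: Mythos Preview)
Your proposal is correct and follows essentially the same route as the paper: invoke Theorem \ref{initial resl phi} and Lemma \ref{hilbert}, iterate the composition theorem \ref{compositionth} to control the powers $R(k)^j$, asymptotically sum the Neumann series inside the split calculus, and observe that the correction $S(k)$ has positive index sets so the leading terms of $Q(k)$ survive. The paper is slightly terser---it writes $(\square_\phi+k^2)^{-1}=Q(k)(\textup{Id}+S(k))$ directly rather than passing through an intermediate $R_\infty$ and a separate identification step---but the content is the same. One small omission: you cover only sufficiently small $k$, whereas the theorem is stated for all $k>0$; the paper closes this by remarking that $\square_\phi+k^2$ is fully elliptic for every $k>0$, so the $\phi$-calculus already handles the resolvent away from $k=0$.
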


\begin{proof}
Fix $N>\dim M$. 
By Lemma \ref{hilbert} there is $k_0>0$ so that  $R(k)^N$ has operator norm less than one for $k\in(0,k_0]$. 
Therefore, $\textup{Id} - R(k)^N$ and hence 
$\textup{Id} - R(k)$ is invertible for these $k$, with inverse given by the Neumann series $\sum_{j=0}^\infty R(k)^j$. 
As in the proof of Lemma \ref{hilbert}, all index sets of $R(k)^{j}$ are bounded below by $j\eps$, and since this tends to $\infty$ as $j\to\infty$, it follows by standard arguments that $(\textup{Id} - R(k))^{-1}=\textup{Id}+S(k)$ where $S(k)$ lies in the calculus with the same lower bounds for the index sets as $R(k)$. \medskip

By Proposition \ref{initial resl phi} the initial parametrix $Q(k)$ satisfies the claims of the theorem. Hence the same holds for $(\square_\phi+k^2)^{-1} = Q(k)(\textup{Id}+S(k))$. 
Since $S(k)$ has positive index sets everywhere, the leading terms of $(\square_\phi+k^2)^{-1}$ are the same as those of $Q(k)$.
This proves the statement when $k$ is restricted to $k\leq k_0$. Since 
$(\square_{\phi} + k^{2})$ is fully elliptic for all $k>0$, with smooth dependence on $k$, the statement holds for all $k$.
\end{proof}

\section{Triple space construction and composition theorems} \label{section 6}

The following results hold for any vector bundle $E$. 
We applied these composition results above for the particular case
where $E= \Lambda {}^\phi T^*M$.

\begin{Th}  \label{compositionth-smooth}
Consider operators $A$ and $B$ with integral kernels lifting to 
\begin{equation}
\begin{split}
&\beta_{k,\phi}^*K_{A} \in \mathcal{A}_{\textup{phg}}^{\mathcal{E}}(M^2_{k,\phi}, \W _{\bb\phi}^{1/2} 
\otimes \End(E)) = \mathcal{A}^{\mathcal{E}}_{k,\phi}, \\
&\beta_{k,\phi}^*K_{B} \in \mathcal{A}_{\textup{phg}}^{\mathcal{F}}(M^2_{k,\phi}, \W _{\bb\phi}^{1/2} 
\otimes \End(E)) =  \mathcal{A}^{\mathcal{F}}_{k,\phi}.
\end{split}
\end{equation}
Assume that both lifts vanish to infinite order at $\bbf$, $\lf$ and $\rf$. Then the composition of operators $A \circ B$
is well-defined and has integral kernel lifting to 
\begin{equation}
\label{eqn:AB}
\beta_{k,\phi}^*K_{A \circ B} \in \mathcal{A}_{\textup{phg}}^{\Cmath}(M^2_{k,\phi}, \W _{\bb\phi}^{1/2} 
\otimes \End(E)) =  \mathcal{A}^{\Cmath}_{k,\phi}.
\end{equation}
Furthermore, the analogous statement holds for the split spaces:
\begin{equation}
 \label{eqn:AB split}
\beta_{k,\phi}^*K_{A}\in \calA^\calE_{k,\phi,\Hmath}\,,\ \beta_{k,\phi}^* K_{B} \in\calA^\calF_{k,\phi,\Hmath} \ \Longrightarrow\ 
\beta_{k,\phi}^*K_{A\circ B} \in \calA^{\Cmath}_{k,\phi,\Hmath}.
\end{equation}
\noindent The index family $\Cmath$ is given by 
\begin{equation}\label{composition-index-family}
\begin{split}
&\Cmath_{\phif_0} = \left( \mathcal{E}_{\textup{bf}_0} + \mathcal{F}_{\textup{bf}_0} + (b+1)\right) 
\overline{\cup} \left( \mathcal{E}_{\lf_0} + \mathcal{F}_{\rf_0} + (b+1)\right) 
\overline{\cup} \left( \mathcal{E}_{\phif_0} + \mathcal{F}_{\phif_0} \right), \\
&\Cmath_{\textup{bf}_{0}} = (\mathcal{E}_{\textup{bf}_{0}} + \mathcal{F}_{\textup{bf}_{0}})
\overline{\cup} (\mathcal{E}_{lb_{0}} + \mathcal{F}_{rb_{0}}) \overline{\cup} (\mathcal{E}_{\phif_0} + \mathcal{F}_{\textup{bf}_0})
\overline{\cup} (\mathcal{E}_{\textup{bf}_0} + \mathcal{F}_{\phif_0}), \\
&\Cmath_{\rf_{0}} =
(\mathcal{E}_{\textup{zf}} + \mathcal{F}_{rb_{0}})
\overline{\cup} (\mathcal{E}_{\rf_{0}}+ \mathcal{F}_{\textup{bf}_{0}})
\overline{\cup} (\mathcal{E}_{\rf_{0}}+\mathcal{F}_{\phif_0}),\\
&\Cmath_{\lf_{0}} = (\mathcal{E}_{\lf_0}+
\mathcal{F}_{\textup{zf}}) \overline{\cup}
(\mathcal{E}_{\textup{bf}_{0}}+\mathcal{F}_{\lf_{0}})  \overline{\cup}
(\mathcal{E}_{\phif_0}+ \mathcal{F}_{\lf_{0}}),\\
&\Cmath_{\textup{zf}} = (\mathcal{E}_{\textup{zf}}+ \mathcal{F}_{\textup{zf}}) \overline{\cup} 
(\mathcal{E}_{\rf_{0}}+ \mathcal{F}_{\lf_{0}}), \\
&\Cmath_{\textup{bf}} =
\Cmath_{\lf} = \Cmath_{\rf}
= \varnothing, \\
&\Cmath_{\textup{sc}} = (\mathcal{E}_{\textup{sc}}  + \mathcal{F}_{\textup{sc}}).
\end{split}
\end{equation}
\end{Th}
{Note that no integrability condition on the index sets at the left and right boundary faces is needed since composition, hence integration, is done only for fixed $k>0$, and Schwartz kernels are assumed to vanish to infinite order at the side faces for $k>0$.}

\begin{proof}
The Schwartz kernel $K_{A \circ B}$ may be expressed using projections
\begin{align*}
\pi_{L} &:\Mbar^3\times \R^+ \longrightarrow \Mbar^2\times \R^+, \quad (p,p',p'',k) \mapsto (p',p'',k),\\
\pi_{C} &:\Mbar^3\times \R^+ \longrightarrow \Mbar^2\times \R^+, \quad (p,p',p'',k) \mapsto (p,p'',k),\\
\pi_{R} &:\Mbar^3\times \R^+ \longrightarrow \Mbar^2\times \R^+, \quad (p,p',p'',k) \mapsto (p,p',k).
\end{align*}
With this notation we can write, provided the pushforward is well-defined,
$$
K_{A \circ B} = (\pi_{C})_{*} \Bigl( \pi_{R}^{*}K_{A}\otimes \pi_{L}^{*}K_{B} \Bigr).
$$
To prove the theorem, we need to define a triple space $M^{3}_{k,\phi}$ given by a blowup of $\Mbar^3\times \R^+$, 
such that the projections $\pi_{L}, \pi_{C}, \pi_{R}$ lift to b-fibrations 
$\Pi_{L}, \Pi_{C}, \Pi_{R}$ on the triple space $M^{3}_{k,\phi}$. More precisely, 
writing $\beta_{k,\phi}^2 \equiv \beta_{k,\phi}: M^{2}_{k,\phi} \to \Mbar^2 \times \R^+$ for the blowdown map on $M^{2}_{k,\phi}$,
we are looking for a space $M^{3}_{k,\phi}$ and a smooth map 
$\beta_{k,\phi}^3: M^{3}_{k,\phi} \to \Mbar^3\times \R^+$,
such that the following diagram commutes for $* \in \{ L,C,R\}$.
 \begin{center}
 \begin{tikzcd} M^{3}_{k,\phi}\arrow[r, "\Pi_{*}" ]\arrow["\beta^{3}_{k,\phi}",d]& M^{2}_{k,\phi}
 \arrow[d, "\beta^{2}_{k,\phi}"] \\\Mbar^3\times \R^+
 \arrow[r,  "\pi_{*}"]& \Mbar^2\times \R^+
 \end{tikzcd}
 \end{center}
Once we know that the projections lift to b-fibrations, we deduce that 
\begin{equation}
\label{eqn:composition phi formula}
\beta_{k,\phi}^* K_{A \circ B} = (\Pi_{C})_{*} \Bigl( \Pi_{R}^{*} \left(\beta_{k,\phi}^* K_{A} \right) \cdot
\Pi_{L}^{*} \left( \beta_{k,\phi}^* K_{B}\right) \Bigr),
\end{equation}
is polyhomogeneous on $M^{3}_{k,\phi}$ by the pullback and pushforward theorems of Melrose.
We proceed in four steps.

\begin{itemize}
\item Step 1: Construct the triple space $M^{3}_{k,\phi}$.
\item Step 2: Show that  projections $\pi_{L}, \pi_{R}, \pi_{C}$
lift to b-fibrations $\Pi_{L}, \Pi_{R}, \Pi_{C}$.
\item Step 3: Compute the index sets \eqref{composition-index-family}.
\item Step 4: Prove \eqref{eqn:AB split}.
\end{itemize}

\subsection*{Step 1: Construction of the triple space $M^{3}_{k,\phi}$.}
We constructed the double space $M^2_{k,\phi}$ from $\Mbar^2\times\R^+$ by first doing a b- (or total boundary) blow-up, then blowing up the fibre diagonal in $k>0$, denoted $\diag_{k,sc,\phi}$, and finally the fibre diagonal at $k=0$, denoted $\diag_{k,\phi}$. (Here fibre diagonal always means the boundary of the interior fibre diagonal, lifted to the b-space.)
Similarly, we will construct the triple space $M^3_{k,\phi}$ by first doing a b-blow-up, then blowing up the preimages of the fibre diagonal in $k>0$ under the three projections, and finally blowing up the preimages of the fibre diagonal at $k=0$. In both cases the three preimages intersect, so their intersection is blown up first. \medskip

We now provide the details. 
We first set up a notation for the various corners in $\Mbar^3\times \R^+$, similar to the notation for the double space. We write $(x,x',x'')$
for the defining functions on the three copies of $\Mbar$. As before, $k\geq 0$ is the coordinate on the
$\R^+$-component.
Now for any triple of binary indices $i_1,i_2,i_3 \in \{0,1\}$ we define
$
C_{i_1i_2i_3} := \{x_j = 0 : \ \textup{for all $j$ with } \ i_j = 1\} \subset \Mbar^3
$
and
$$ C_{i_1i_2i_3}^\bullet = C_{i_1i_2i_3}\times \{0\}\,,\quad C_{i_1i_2i_3}^+ = C_{i_1i_2i_3} \times \R^+\,.$$
For example the highest codimension corner in $\Mbar^3\times \R^+$ is given by
$$C_{111}^\bullet = \{ x = x' = x'' = k =0\},$$
We will slightly abuse notation below by denoting the lifts of $C_{i_1i_2i_3}^{\ast}$ as $C_{i_1i_2i_3}^{\ast}$ again for $\ast\in\{\bullet,+\}$ and $i_1,i_2,i_3\in\{0,1\}$.
\medskip

We will also sometimes use this systematic notation for the boundary hypersurfaces of the double spaces $M^2_{k,sc,\phi}$ and $M^2_{k,\phi}$. Thus, for example,  $\bbf=C_{11}^+$ and $\bbf_0=C_{11}^\bullet$. For the fibre diagonals we also write  $\diag_{k,sc,\phi}=C_{\phi\phi}^+$ and $\diag_{k,\phi}=C_{\phi\phi}^\bullet$.

\subsubsection* {Construction of the b-triple space}

The space  $\Mbar^3\times \R^+$ has boundary faces of codimensions 4, 3, 2 and 1. We do the total boundary blow-up, i.e. we first blow up the codimension 4 corner, then the codimension 3 corners, and then the codimension 2 corners. \medskip

The result of blowing up the codimension 4 corner, $C_{111}^\bullet$, is shown schematically in Figure 
 \ref{triple1}, left. Note that, as before, for the figures we pretend that $\Mbar=\R^+$, i.e.\ we leave out the $y$-variables in $\Mbar$, and we only depict the boundary faces. For the four-dimensional space $\R_+^4$ and its blow-ups this boundary is a union of three-dimensional polyhedra (which are the boundary hypersurfaces), just as for the three-dimensional space $\R_+^3$ it is a union of polygons, as in Figure \ref{fig:9}, for example.

 \begin{figure}[h]
\begin{center}
  \includegraphics[page=1,width=0.3\linewidth]{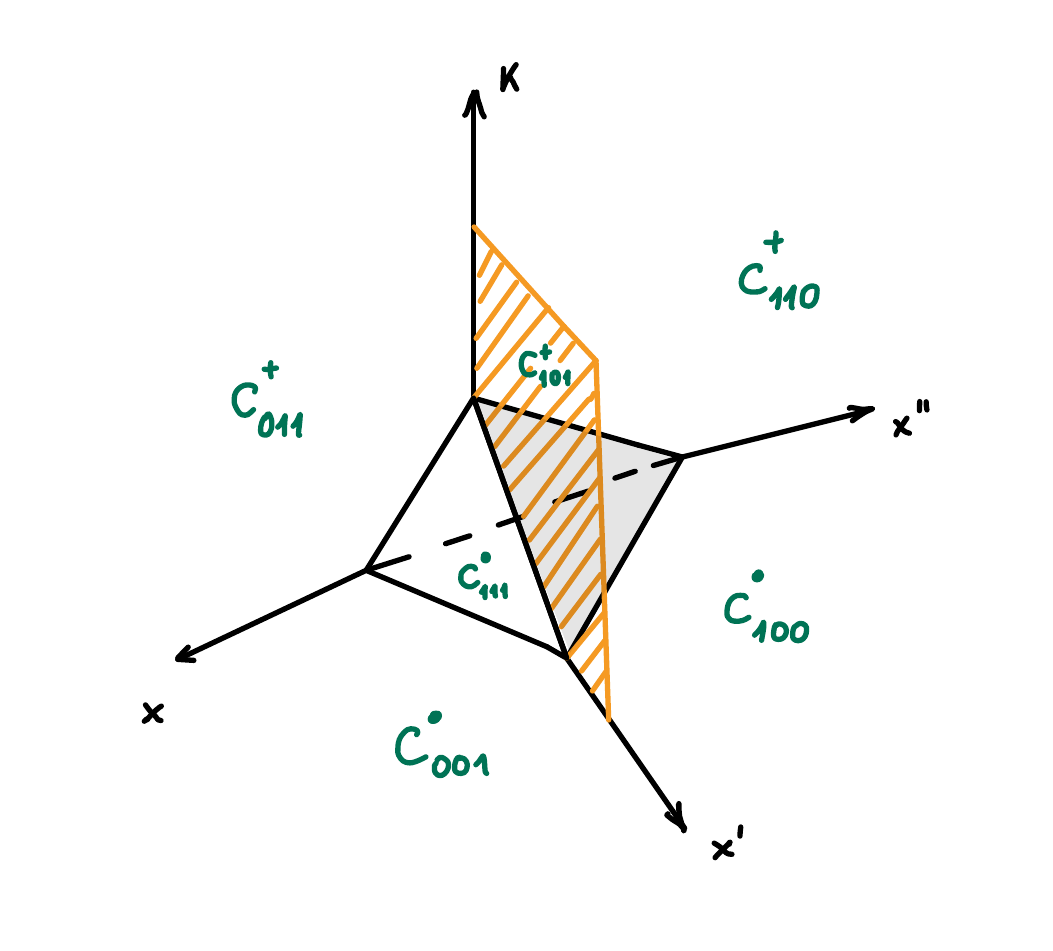}
    \includegraphics[page=2,width=0.3\linewidth]{Pictures-triple-space.pdf}
      \includegraphics[page=3,width=0.3\linewidth]{Pictures-triple-space.pdf}
  \end{center}
  \caption{Construction of $M^3_{k,\bb}$: In the left picture, only the highest codimension corner $[\Mbar^{3} \times \R^+; C_{111}^\bullet]$ is blown up. 
  In the middle picture the codimension three corners are also blown up.  In the last picture the codimension two corner $C_{110}^+$ is also blown-up.
}
  \label{triple1}
\end{figure}

\medskip

After blowing up  $C_{111}^\bullet$ the lifts of the codimension $3$ corners $C_{011}^\bullet, C_{101}^\bullet, C_{110}^\bullet$ and $C_{111}^+$ are pairwise disjoint, so we can blow them up in any order.  The result is illustrated in Figure \ref{triple1}, center.
After this, the lifts of the six codimension $2$ corners are pairwise disjoint, so we can blow them up in any order.
To get an intuitive understanding of the blowups of the codimension $2$ corners, we illustrate schematically the blowup of $C_{110}^+$
in Figure \ref{triple1}, right. This defines the b-triple space
$M^{3}_{k,\bb}$.
with the blowdown map $$\beta_{k,\bb}^{3}: M^{3}_{k,\bb}
\longrightarrow \Mbar^3\times \R^+\,.$$ 
As shown in \cite{guillarmou2008resolvent}, the projections $\pi_{L}, \pi_{C}, \pi_{R}$ lift to b-fibrations $M^{3}_{k,\bb} \to M^{2}_{k,\bb}$ 
(the latter space is constructed in Figure \ref{fig:boat1}), which are denoted by
$\pi_{\textup{b,L}}, \pi_{\textup{b,C}}$, $\pi_{\textup{b,R}}$, respectively. 

\subsubsection*{Blow up of  fibre diagonals $\textup{diag}_{k, \textup{sc},\phi}$ in $\textup{bf}$ faces}

The preimage of the interior $\interior{C_{i_2i_3}^{\ast}}$ under  the projection $\pi_{L}:\Mbar^3\times \R^+ \longrightarrow \Mbar^2\times \R^+$ is $\interior{C_{0i_2i_3}^{\ast}}\cup\interior{C_{1i_2i_3}^\ast}$ for any $i_2i_3$ and $\ast\in\{\bullet,+\}$, so if $i_2i_3\ast\neq00+$ we have for the lifted projection
(by abuse of notation, we denote the lifted corners by the same symbol again)
\begin{equation}
 \label{eqn:preimage pi_bR}
 \pi_{\bb,L}^{-1}(C_{i_2i_3}^\ast) = C_{0\,i_2i_3}^\ast \cup C_{1\.i_2i_3}^\ast\,,
\end{equation}
and the latter two hypersurfaces intersect.
Now consider $\textup{diag}_{k, \textup{sc},\phi} = C_{\phi\phi}^+\subset M^{2}_{k,\bb}$\,, 
the part of the fibre diagonal contained in $C_{11}^+=\bbf$, see \eqref{eqn:def diag k sc phi}. 
By \eqref{eqn:preimage pi_bR} and its analogues at $C$ and $R$, its preimage
under each of the projections $\pi_{\bb,*}$ is given by the union of two 
fibre diagonals:
\begin{equation}\label{diag-preimage1}\begin{split}
\pi_{\bb,L}^{-1}(\textup{diag}_{k,\textup{sc},\phi}) &= C_{0\phi\phi}^+\cup
C_{1\phi\phi}^+,\\
\pi_{\bb,C}^{-1}(\textup{diag}_{k,\textup{sc},\phi}) &= C_{\phi0\phi}^+\cup
C_{\phi1\phi}^+,\\
\pi_{\bb,R}^{-1}(\textup{diag}_{k,\textup{sc},\phi}) &= C_{\phi\phi0}^+ \cup
C_{\phi\phi1}^+\,.
\end{split}\end{equation}
Here, e.g. $C_{\phi\phi 0}^+\subset M^3_{k,\bb}$ denotes the 
intersection of the lift of the fibre diagonal $\{x=x',y=y'\}$
with $(\beta_{k,\bb}^{3})^*(C_{110}^+)$. Similarly, 
$C_{\phi\phi 1}^+\subset M^3_{k,\bb}$ denotes the 
intersection of the lift of the fibre diagonal $\{x=x',y=y'\}$
with $(\beta_{k,\bb}^{3})^*(C_{111}^+)$. This notation is 
illustrated in Figure \ref{triple4}. 

 \begin{figure}[h]
\begin{center}
  \includegraphics[page=4,width=0.8\linewidth]{Pictures-triple-space.pdf}
  \end{center}
  \caption{Lifted diagonals $C_{\phi\phi 0}^+,C_{\phi\phi 1}^+$ in $M^{3}_{k,\bb}$.} \label{triple4}
\end{figure}

Let us check which of the six submanifolds on the right in \eqref{diag-preimage1} intersect non-trivially. First, the first and 
second term in each line intersect. Also, any two of the second terms intersect in the triple fibre diagonal
$$
\calO^+=
 C_{1\phi\phi}^+ \cap C_{\phi1\phi}^+ \cap C_{\phi\phi1}^+.
$$
We refrain from writing $C_{\phi\phi\phi}^+$ instead of $\calO^+$ for better readibility.
\medskip

\noindent We define the triple scattering space
by
\begin{align*}
M^{3}_{k,\ssc} : = [M^{3}_{k,\bb}; \mathcal{O}^+,\ & C_{1\phi\phi}^+, C_{\phi1\phi}^+, C_{\phi\phi1}^+,
\\ & C_{0\phi\phi}^+, C_{\phi0\phi}^+, C_{\phi\phi0}^+\,]\,,
\end{align*} 
with total blow down map $\beta^{3}_{k,\ssc} : M^{3}_{k,\ssc} \longrightarrow \Mbar^3\times \R^+$. 
Note that we blow up the fibre diagonals in $C_{111}^+$ first.\footnote{This is in fact immaterial, the other order would work just as well.}
As shown in \cite[Lemma 6.1]{guillarmou2008resolvent}, the lifted projections $\pi_{\bb,L}, \pi_{\bb,C}, \pi_{\bb,R}$ lift to b-fibrations  $\pi_{\ssc,L}, \pi_{\ssc, C}, \pi_{\ssc, R}: M^3_{k,\ssc}\to M^2_{k,\ssc}$,
respectively. This also follows by the argument in the proof of Theorem \ref{Pi b fib} below.

\subsubsection* {Blow up of fibre diagonals $\phif_0$ in $\textup{bf}_{0}$ faces}

This step is analogous to the previous one, with $k>0$ fibre diagonals replaced by $k=0$ fibre diagonals.
Consider the fibre diagonal $\textup{diag}_{k, \phi} = C_{\phi\phi}^\bullet\subset M^{2}_{k , \textup{sc},\phi}$\,, which is contained in the (lifted) boundary hypersurface $C_{11}^\bullet=\bbf_0$. Again by \eqref{eqn:preimage pi_bR} its preimage under each of the projections $\pi_{\ssc,*}$ is given by the union of two 
fibre diagonals:
\begin{align*}
\pi_{\ssc,L}^{-1}(\textup{diag}_{k, \phi}) &=  C_{0\phi\phi}^\bullet\cup
C_{1\phi\phi}^\bullet,\\
\pi_{\ssc,C}^{-1}(\textup{diag}_{k, \phi}) &= C_{\phi0\phi}^\bullet\cup
C_{\phi1\phi}^\bullet,\\
\pi_{\ssc,R}^{-1}(\textup{diag}_{k, \phi}) &= C_{\phi\phi0}^\bullet \cup
C_{\phi\phi1}^\bullet\,,
\end{align*} 
with notation analogous to before.  The hypersurfaces in the third line are
illustrated (on the level of $M^{3}_{k,\bb}$) in Figure \ref{triple5}. 

 \begin{figure}[h]
\begin{center}
  \includegraphics[page=5,width=0.8\linewidth]{Pictures-triple-space.pdf}
  \end{center}
  \caption{Lifted diagonals $C_{\phi\phi 0}^\bullet,C_{\phi\phi 1}^\bullet$ in $M^{3}_{k,\bb}$.} \label{triple5}
\end{figure}

Again, intersections are non-trivial only in each line and among second terms on the right, and the latter intersect pairwise in the triple fibre diagonal at $k=0$,
$$
\mathcal{O}^\bullet = C_{1\phi\phi}^\bullet \cap C_{\phi1\phi}^\bullet \cap C_{\phi\phi1}^\bullet.
$$
The blowup of $\mathcal{O}^\bullet$ (and of $\mathcal{O}^+$) and the various 
lifted diagonals are illustrated schematically in Figure \ref{triple6}.

 \begin{figure}[h]
\begin{center}
  \includegraphics[page=6,width=0.5\linewidth]{Pictures-triple-space.pdf}
  \end{center}
  \caption{Various lifted diagonals after blowup of $\mathcal{O}$ and $\mathcal{O}^\bullet$.} \label{triple6}
\end{figure}

We can now define the final (phi) triple space as follows
\begin{equation}\label{phi-triple-space}\begin{split}
M^{3}_{k,\phi} : =  [M^{3}_{k,\ssc}; \mathcal{O}^\bullet,\ & C_{1\phi\phi}^\bullet, C_{\phi1\phi}^\bullet, C_{\phi\phi1}^\bullet,
\\ & C_{0\phi\phi}^\bullet, C_{\phi0\phi}^\bullet, C_{\phi\phi0}^\bullet\,]\,,
\end{split}\end{equation}
with total blow down map, $\beta_{k,\phi}^{3} : M^{3}_{k,\phi} \longrightarrow \Mbar^3\times \R^+$. 

\subsection*{Step 2: lifts $\Pi_{L}, \Pi_{C}, \Pi_{R}$ exist and are b-fibrations}

\begin{Th}\label{Pi b fib}
The projections $\pi_{L}, \pi_{C}, \pi_{R}$ 
lift to b-fibrations
\begin{equation}\label{lift b-fib}
 \Pi_{L},\Pi_{C},\Pi_{R}: M^{3}_{k,\phi}
\longrightarrow M^{2}_{k,\phi}.
\end{equation}

\end{Th}
\begin{proof}
Since the spaces 
$M^{3}_{k,\phi}$ and $M^{2}_{k,\phi}$ are symmetric under permutations of the
 $M$ factors, it suffices to prove the statement only for one of the projections, e.g. for $\Pi_R$.
\medskip

As noted above,  $\pi_{R}$ lifts to a b-fibration
 $\pi_{\ssc, R}: M^{3}_{k,\ssc} \to M^{2}_{k,\ssc}$. 
We now use two standard lemmas about lifting b-fibrations. The first, Lemma 2.5 in \cite{analytic surgery}, states that $\pi_{\ssc,R}$ lifts if we blow up a closed p-submanifold $Z$ in the range $M^2_{k,\ssc}$ and also all the maximal p-submanifolds of the domain $M^3_{k,\ssc}$ which are contained in the preimage $\pi_{\ssc,R}^{-1}(Z)$. We apply this to $Z=\diag_{k,\phi}$, with preimage $C_{\phi\phi0}^\bullet\cup
C_{\phi\phi1}^\bullet$, to conclude that $\pi_{\ssc,R}$ lifts to a b-fibration
\begin{equation}
 \label{eqn:b-fib step1}
 [M^{3}_{k,\ssc}; C_{\phi\phi1}^\bullet,
C_{\phi\phi0}^\bullet] \to [M^{2}_{k,\ssc},\diag_{k,\phi}] = M^2_{k,\phi}\,.
\end{equation}

Thus it remains to blow up the space on the left to obtain $M^3_{k,\phi}$, and to check that the composed map remains a b-fibration.
For this we use Lemma 2.7 in \cite{analytic surgery} repeatedly. This lemma states that if  $f:X\to Y$ is a b-fibration and $S\subset X$ a closed p-submanifold then the composition with the blow-down map, $[X,S]\to X\stackrel{f}\to Y$, is a b-fibration again if $f(S)$ is not contained in a boundary face of $Y$ of codimension two, and if $f$ is b-transversal to $S$. These conditions can also be restated as follows: $f(S)$ is all of $Y$ or a boundary hypersurface of $Y$, and $f$ restricted to $S$ is a b-fibration $S\to f(S)$. This is easily seen to be satisfied for all blow-up centers in the sequel. \medskip

Recall from \eqref{phi-triple-space} that, to obtain $M^3_{k,\phi}$, we need to blow-up $\calO^\bullet$ first in $M^3_{k,\ssc}$. Now the three p-submanifolds $C_{\phi\phi1}^\bullet$, $C_{\phi\phi0}^\bullet$, $\calO^\bullet$ of $M^3_{k,\ssc}$ satisfy the following relations:
$$  C_{\phi\phi1}^\bullet \subset \calO^\bullet\,,\quad C_{\phi\phi0}^\bullet \cap \calO^\bullet = \emptyset\,.$$
The former holds by definition of $\calO^\bullet$, the latter holds since after
blow-up of $C_{111}$ in $M^3$ the triple fibre diagonal (on which $x=x'=x''$ holds) hits the boundary only in the interior of the $C_{111}$ front face, while $C_{\phi\phi0}$ is contained in the $C_{110}$ front face;
 compare Figure 
\ref{triple6}.
Therefore, 
 Lemma \ref{lem:commuting blow-ups}(a) implies
$$  [M^{3}_{k,\ssc}; C_{\phi\phi1}^\bullet,
C_{\phi\phi0}^\bullet,\calO^\bullet] =  [M^{3}_{k,\ssc}; \calO^\bullet, C_{\phi\phi1}^\bullet,
C_{\phi\phi0}^\bullet]\,. $$
Finally, we blow up $C_{\phi1\phi}^\bullet$, $C_{1\phi\phi}^\bullet$, $C_{\phi0\phi}^\bullet$, $C_{0\phi\phi}^\bullet$ to obtain $M^3_{k,\phi}$ (using that the former two are disjoint from $C_{\phi\phi0}^\bullet$), and the proof is complete.
\end{proof}

\subsection*{Step 3: Compute the index sets \eqref{composition-index-family}}
In order to compute the index sets for the composition we first need to determine the exponent matrices (see \eqref{hypb}) of the lifted projections $\Pi_L$, $\Pi_C$, $\Pi_R: M^3_{k,\phi}\to M^2_{k,\phi}$. That is, for each of these maps  and for each boundary hypersurface $H'$ of the target space $M^2_{k,\phi}$ we need to determine which hypersurfaces $H$ of $M^3_{k,\phi}$ are mapped to $H'$. For these the exponent $e(H,H')$ equals one, for all others it is zero. That only the exponents zero and one occur follows from the proof in step 2 and Lemmas 2.5 and 2.7 in \cite{analytic surgery} (our maps are simple in their terminology). \medskip

As an example, we determine the preimage of $H'=\bbf_0 = C_{11}^\bullet$ under the map $\Pi_L$. First, the preimage of $\bbf_0\subset M^2_{k,\bb}$ under the map $\pi_{\bb,L}$ is the union of $C_{011}^\bullet$ and $C_{111}^\bullet$, see \eqref{eqn:preimage pi_bR}.
When passing to the scattering spaces this remains unchanged since only fibre diagonals in $k>0$ are blown up.
Now consider the blow-ups leading from $M^3_{k,\ssc}$ to $M^3_{k,\phi}$. 
A new front face arising from such a blow-up, say of the submanifold $Z$, will be in the preimage of $\bbf_0$ under $\Pi_L$ if and only if $Z$ maps  \textit{onto} $\bbf_0$. This is the case precisely for the fibre diagonals $C_{\phi1\phi}^\bullet$ and $C_{\phi\phi1}^\bullet$. Summarizing, we obtain
$$ (\Pi_L)^{-1}(\bbf_0) = C_{011}^\bullet \cup C_{111}^\bullet \cup C_{\phi1\phi}^\bullet \cup C_{\phi\phi1}^\bullet \,.
$$
By similar arguments we obtain preimages as listed in  Table \ref{table:exp matrix}. \medskip

\begin{table}
\renewcommand{\arraystretch}{1.2}
$$
\begin{array}[h]{l || l | l | l}
\multirow{2}{*}{$H'\subset M^2_{k,\phi}$} & 
\multicolumn{3}{c}{\text{Preimage of $H'$ under the map}} \\
& \multicolumn{1}{c}{\Pi_L} & \multicolumn{1}{c}{\Pi_C} & \multicolumn{1}{c}{\Pi_R}
\\
\hline\hline 
 \zf = C_{00}^\bullet
 &
 C_{*00}^\bullet 
 & C_{0*0}^\bullet
 & C_{00*}^\bullet
\\
\hline
 \bbf_0=C_{11}^\bullet 
 & 
 C_{*11}^\bullet \ C_{\phi1\phi}^\bullet \ C_{\phi\phi1}^\bullet
 &
 C_{1*1}^\bullet \ C_{1\phi\phi}^\bullet \ C_{\phi\phi1}^\bullet
 &
 C_{11*}^\bullet  \ C_{\phi1\phi}^\bullet \ C_{1\phi\phi}^\bullet
\\
 \phif_0 = C_{\phi\phi}^\bullet 
 & 
 C_{*\phi\phi}^\bullet \ \calO^\bullet
 & 
 C_{\phi*\phi}^\bullet \ \calO^\bullet
 & 
 C_{\phi\phi*}^\bullet \ \calO^\bullet
\\
 \lf_0 = C_{10}^\bullet
 & C_{*10}^\bullet  \ C_{\phi\phi0}^\bullet
 & C_{1*0}^\bullet  \ C_{\phi\phi0}^\bullet
 & C_{10*}^\bullet  \ C_{\phi0\phi}^\bullet
\\
 \rf_0 = C_{01}^\bullet
 & C_{*01}^\bullet \ C_{\phi0\phi}^\bullet
 & C_{0*1}^\bullet \ C_{0\phi\phi}^\bullet
 & C_{01*}^\bullet \ C_{0\phi\phi}^\bullet
\\
\hline
 \bbf = C_{11}^+
  & 
 C_{*11}^+ \ C_{\phi1\phi}^+ \ C_{\phi\phi1}^+
 &
 C_{1*1}^+ \ C_{1\phi\phi}^+ \ C_{\phi\phi1}^+
 &
 C_{11*}^+  \ C_{\phi1\phi}^+ \ C_{1\phi\phi}^+
\\
 \ssc = C_{\phi\phi}^+
 & 
 C_{*\phi\phi}^+ \ \calO^+
 & 
 C_{\phi*\phi}^+ \ \calO^+
 & 
 C_{\phi\phi*}^+ \ \calO^+
\\
 \lf = C_{10}^+
  & C_{*10}^+  \ C_{\phi\phi0}^+
 & C_{1*0}^+  \ C_{\phi\phi0}^+
 & C_{10*}^+  \ C_{\phi0\phi}^+
\\
 \rf = C_{01}^+
  & C_{*01}^+ \ C_{\phi0\phi}^+
 & C_{0*1}^+ \ C_{0\phi\phi}^+
 & C_{01*}^+ \ C_{0\phi\phi}^+
\end{array}
$$
\caption{Relations of boundary hypersurfaces of $M^3_{k,\phi}$ and $M^2_{k,\phi}$ under the maps $\Pi_L$, $\Pi_C$ and $\Pi_R$; a $*$ means that both faces with $*=0$ and $*=1$ occur. The bottom four lines are a copy of the middle four lines, with $\bullet$ replaced by $+$.
} 
\label{table:exp matrix}
\end{table}

From Table \ref{table:exp matrix} we can read off the index family $\Cmath$, using \eqref{eqn:composition phi formula} and the index set formulas in the push-forward and the pull-back theorem. We write out the argument for $\Cmath_{\phif_0}$:
The entry for $\phif_0$ in the $\Pi_C$ column shows that the asymptotics of $ \Pi_{R}^{*} \left(\beta_{k,\phi}^* K_{A} \right) \cdot
\Pi_{L}^{*} \left( \beta_{k,\phi}^* K_{B}\right)$ at the faces $C_{\phi0\phi}^\bullet$, $C_{\phi1\phi}^\bullet$ and $\calO^\bullet$ contribute to $\calC_{\phif_0}$. 
To find the asymptotics of $\Pi_{R}^{*} \left(\beta_{k,\phi}^* K_{A} \right)$ at 
$C_{\phi0\phi}^\bullet$ we note that the entry $C_{\phi0\phi}^\bullet$ appears in column $\Pi_R$ only in the line $\lf_0$, so the asymptotics is given by $\calE_{\lf_0}$. Arguing similarly for the other factor and the other faces we find that the product has index sets
\begin{enumerate}
\item  $\mathcal{E}_{\textup{bf}_0} + \mathcal{F}_{\textup{bf}_0}$ at $C_{\phi1\phi}^\bullet$
\item $\mathcal{E}_{\lf_0} + \mathcal{F}_{\rf_0}$ at $C_{\phi0\phi}^\bullet$
\item $\mathcal{E}_{\phif_0} + \mathcal{F}_{\phif_0}$ at $\calO^\bullet$\end{enumerate}
From there we conclude (taking into account the
lifting properties of $\W _{\bb\phi}^{1/2} $ as in \cite[Theorem 9]{grieser-para})
$$
\Cmath_{\phif_0} = \left( \mathcal{E}_{\textup{bf}_0} + \mathcal{F}_{\textup{bf}_0} + (b+1)\right) 
\overline{\cup} \left( \mathcal{E}_{\lf_0} + \mathcal{F}_{\rf_0} + (b+1)\right) 
\overline{\cup} \left( \mathcal{E}_{\phif_0} + \mathcal{F}_{\phif_0} \right) 
$$
Proceeding similarly at other boundary faces we conclude the rest of \eqref{composition-index-family}.

\subsection*{Step 4: Composition of split operators}
We now prove \eqref{eqn:AB split}. 
It is useful to introduce the 'tropical' notation for operations on index sets, $E\oplus F:=E \cup F$ and $E\odot F:= E+F$. Then we have for any $ u\in \calA^E_{\textup{phg}}(M)$ and $v\in \calA^F_{\textup{phg}}(M)$
$$
u+v\in\calA^{E\oplus F}_{\textup{phg}}(M),\ 
u\cdot v \in \calA^{E\odot F}_{\textup{phg}}(M)\,.
$$
This implies a similar rule for $2\times2$ matrices of functions $\uhat=(u_{ij})$ on $M$ and of index sets $\Ewhat=(E_{ij})$ where $i,j\in\{0,1\}$: if we define $\uhat\in\calA^\Ewhat_{\textup{phg}}(M)$ to mean 
$u_{ij}\in\calA^{E_{ij}}_{\textup{phg}}(M)$ for all $i,j$ then for any 
$\uhat\in \calA^\Ewhat_{\textup{phg}}(M)$ and $\vhat\in \calA^\Fwhat_{\textup{phg}}(M)$
$$
\uhat+\vhat\in\calA^{\Ewhat\oplus \Fwhat}_{\textup{phg}}(M),\ 
\uhat\cdot\vhat \in \calA^{\Ewhat\otimes \Fwhat}_{\textup{phg}}(M),
$$
where $\otimes$ denotes the tropical matrix product, which is the usual matrix product with 
$+,\cdot$ replaced by $\oplus,\odot$, respectively. \medskip

\noindent We now turn to index families $\calE$, $\calF$ for $M^2_{k,\phi}$ and integral kernels 
$K_A\in\calA^\calE_{k,\phi}$ and $K_B\in\calA^\calF_{k,\phi}$, omitting $\beta_{k,\phi}^*$ for simplicity. Define the tropical sum 
$\calE\oplus\calF$ face by face and the $\phi$-tropical product as
\begin{equation}
\label{eqn:def odot phi} 
 \calE \odot_\phi \calF := \Cmath
\end{equation}
if $\Cmath$ is defined from $\calE$, $\calF$ as in \eqref{composition-index-family}.
Then linearity and \eqref{eqn:AB} imply
$$ K_A\in \calA^\calE_{k,\phi},\ K_B \in \calA^\calF_{k,\phi} \Longrightarrow
K_{A+B} \in \calA_{k,\phi}^{\calE\oplus\calF},\ K_{A\circ B}\in\calA_{k,\phi}^{\calE\odot_\phi \calF}\,.$$
Again, this implies for $2\times2$ matrices of kernels and index families, with notation analogous to above,
\begin{equation}
 \label{eqn:comp ring homo}
 \Kwhat_A\in \calA_{k,\phi}^\calEwhat,\ \Kwhat_B \in \calA_{k,\phi}^\calFwhat \Longrightarrow
\Kwhat_{A+B} \in \calA_{k,\phi}^{\calEwhat\oplus\calFwhat},\ \Kwhat_{A\circ B}\in\calA_{k,\phi}^{\calEwhat\otimes_\phi \calFwhat}\,.
\end{equation}
Recall the definition of the split $(k,\phi)$-calculus, Definitions \ref{def:split phi calc} and \ref{def:split k phi calculus}.
Given an index family $\calE$ for $M^2_\phi$, let $\calEwhat$ be the associated split index family, considering it as $2\times2$ matrix at each face.
Consider kernels $K_A,K_B$ supported over $\Umath\times\Umath$ (see below for the general case), and let $\Kwhat_A,\Kwhat_B$ be the associated $2\times2$ matrices of kernels as in Definition \ref{def:split phi calc}. 
Now if $K_A\in  \calA^\calE_{k,\phi,\Hmath}$, $K_{B} \in\calA^\calF_{k,\phi,\Hmath}$
then 
$$
\Kwhat_A\in\calA^\calEwhat_{k,\phi}, \quad \Kwhat_B\in\calA^\calFwhat_{k,\phi}.
$$
Thus \eqref{eqn:comp ring homo} implies $\Kwhat_{A\circ B}\in \calA^{\calEwhat\otimes_\phi \calFwhat}_{k,\phi}$.
So \eqref{eqn:AB split} will follow  if we prove the
\begin{equation}
\label{eqn:claim split}
 \text{Claim: }\quad \calEwhat \otimes_\phi \calFwhat \subset (\calE \odot_\phi\calF)\,\widehat{} \,.
 \end{equation}
This is to be understood as inclusion of index sets at each face and in each matrix component. We prove  \eqref{eqn:claim split} at the face $\bbf_0$, the proof at the other faces is analogous. We abbreviate $\bbf_0$, $\phif_0$, $\lf_0$, $\rf_0$ by $\bb$, $\phi$, $\ll$, $\rr$ respectively. For example, $\calEwhat_{01,b}$ denotes the $01$ component of $\calEwhat$ at $\bbf_0$ (which is $\calE_\bbfz+2$). \medskip

\noindent For any $i,j\in\{0,1\}$ we have by definition of $\Cmath$ in 
\eqref{composition-index-family}, with sums over $k\in\{0,1\}$,
\begin{align}
 (\calEwhat \otimes_\phi \calFwhat)_{ij,\bb}
 & = \bigoplus_{k} (\calEwhat_{ik} \odot_\phi \calFwhat_{kj})_{\bb} 
 \notag\\
 & = \bigoplus_k \left[ (\calEwhat_{ik,\bb} \odot \calFwhat_{kj,\bb}) \ocup (\calEwhat_{ik,l} \odot \calFwhat_{kj,r}) \ocup (\calEwhat_{ik,\phi} \odot \calFwhat_{kj,\bb}) \ocup
 (\calEwhat_{ik,\bb} \odot \calFwhat_{kj,\phi})
 \right] 
 \notag\\
 & \subset \left[ \bigoplus_k (\calEwhat_{ik,\bb} \odot \calFwhat_{kj,\bb}) \right] \ocup
 \left[ \bigoplus_k (\calEwhat_{ik,l} \odot \calFwhat_{kj,r}) \right] \ocup \dots
 \label{eqn:E tensor F calc}
\end{align}
where in the last line we used part (a) of the following lemma (recall that $\oplus=\cup$). For an index set $E$ denote
$$ E^b = 
\begin{pmatrix}
 E & E+2\\
 E+2 & E+4
\end{pmatrix}.
$$
\begin{Lemma}
\mbox{} 
\begin{enumerate}
 \item[(a)] If $E_1,\dots,E_N$ and $F_1,\dots,F_N$ are index sets then
 $$ (E_1\ocup\dots\ocup E_N) \cup (F_1 \ocup\dots\ocup F_N)
 \subset (E_1\cup F_1) \ocup \dots \ocup (E_N \cup F_N) $$
 \item[(b)] If $\calE$, $\calF$ are index families for $M^2_{k,\phi}$ then, with notation as introduced above,
\begin{align*}
 \calEwhat_{\bb}\otimes \calFwhat_{\bb} &= (\calE_{\bb}+\calF_{\bb})^b,\quad
 &\calEwhat_l\otimes \calFwhat_r &= (\calE_l+\calF_r)^b, \\
 \calEwhat_\phi\otimes \calFwhat_{\bb} &= (\calE_\phi+\calF_{\bb})^b,\quad 
& \calEwhat_{\bb}\otimes \calFwhat_\phi &= (\calE_{\bb}+\calF_\phi)^b.
\end{align*}
\end{enumerate}
\end{Lemma}
\noindent The identities in (b) are the core of the proof of \eqref{eqn:AB split}.
\begin{proof}
 (a) follows immediately from the definition of the extended union. For (b) we calculate
\begin{align*}
 \calEwhat_{\bb}\otimes \calFwhat_{\bb} 
 &= 
\begin{pmatrix}
 \calE_{\bb} & \calE_{\bb}+2 \\
 \calE_{\bb}+2 & \calE_{\bb}+4 
\end{pmatrix}
\otimes
\begin{pmatrix}
 \calF_{\bb} & \calF_{\bb}+2 \\
 \calF_{\bb}+2 & \calF_{\bb}+4 
\end{pmatrix}
\\ &=
\begin{pmatrix}
 \calE_{\bb}+\calF_{\bb} & \calE_{\bb}+\calF_{\bb}+2 \\
 \calE_{\bb}+\calF_{\bb}+2 & \calE_{\bb}+\calF_{\bb}+4 
\end{pmatrix}
=  (\calE_{\bb}+\calF_{\bb})^b, 
\end{align*}
where, for example, the upper left entry arises as
$(\calE_{\bb} \odot \calF_{\bb}) \oplus 
((\calE_{\bb}+2) \odot (\calF_{\bb}+2))
= 
(\calE_{\bb} + \calF_{\bb}) \cup 
((\calE_{\bb}+2) + (\calF_{\bb}+2))
= \calE_{\bb} + \calF_{\bb}$.
The other calculations are similar.
\end{proof}
We can now finish the proof of the claim \eqref{eqn:claim split}, continuing from \eqref{eqn:E tensor F calc}. By the definition of $\otimes$ and by part (2) of the lemma we have
$ \bigoplus_k (\calEwhat_{ik,\bb} \odot \calFwhat_{kj,\bb}) = (\calEwhat_{\bb}\otimes\calFwhat_{\bb})_{ij} = (\calE_{\bb}+\calF_{\bb})^b_{ij}$.
Rewriting the other terms of \eqref{eqn:E tensor F calc} similarly using the other identities in part (2) of the lemma we get
$$
(\calEwhat \otimes_\phi \calFwhat)_{\bb} \subset
(\calE_{\bb}+\calF_{\bb})^b \ocup (\calE_l+\calF_r)^b \ocup (\calE_\phi+\calF_{\bb})^b \ocup (\calE_{\bb}+\calF_\phi)^b.
$$
Since this last expression equals $(\calE\odot_\phi \calF)\,\widehat{}_{\bb}$,
 we obtain the $\bbf_0$ part of the claim \eqref{eqn:claim split}.
 The proof at the other boundary faces is analogous.\medskip

Finally, the restriction that $K_A$ was supported over $\Umath\times\Umath$ was only made to simplify the notation. One way to remove it is to write $M=\Umath\cup\Umath'$ where $\Umath'=M\setminus[(0,\eps/2)\times\dM]$, then any $K_A$ is a sum of four terms, supported over $\Umath\times\Umath$, $\Umath\times\Umath'$, $\Umath'\times\Umath$
and $\Umath'\times\Umath'$ resprectively. We have dealt with the first term; the others are treated similarly.
\end{proof}

\noindent We can now prove the general composition formula, where the operators have a conormal
singularity along the diagonal. Note that the lifted diagonal in $M^2_{k,\phi}$ intersects only the 
boundary faces $\phif_0$, $\textup{zf}$ and $\textup{sc}$.

\begin{Th}  \label{compositionth}
Consider operators 
\begin{align*} 
&A \in \Psi_{k, \phi}^{m,(a_{\phif_0},a_{\textup{zf}}, a_{\textup{sc}}), \mathcal{E}}(M; E), \\
&B \in \Psi_{k, \phi}^{m',(a'_{\phif_0},a'_{\textup{zf}}, a'_{\textup{sc}}), \mathcal{F}}(M; E).
\end{align*}
Assume that $\mathcal{E}_{\phif_0},\mathcal{E}_{\textup{zf}}, \mathcal{E}_{\textup{sc}}$ contains the 
index sets $a_{\phif_0},a_{\textup{zf}}, a_{\textup{sc}}$, respectively. Similarly, assume that 
$\mathcal{F}_{\phif_0}, \mathcal{F}_{\textup{zf}}, \mathcal{F}_{\textup{sc}}$ contains the 
index sets $a'_{\phif_0},a'_{\textup{zf}}, a'_{\textup{sc}}$, respectively. 
Then the composition of operators $A \circ B$
is well-defined with\footnote{The version of this paper published in Journal de l'Ecole Polytechnique (9) 2002 states the additional condition 'provided $\mathcal{E}_{\rf_0} + \mathcal{F}_{\lf_0} > 0$' here; but it is not needed, see the remark after Theorem \ref{compositionth-smooth}.} 
$$
A \circ B \in \Psi_{k, \phi}^{m+m',(a_{\phif_0} + a'_{\phif_0}, a_{\textup{zf}} + a'_{\textup{zf}}, a_{\textup{sc}}
+ a'_{\textup{sc}}), \, \Cmath}(M; E).
$$
Furthermore, the analogous statement holds for the split calculi:
$$
A \in\Psi^{m,(a),\calE}_{k,\phi,\calH}(M; E),\ B\in \Psi^{m',(a'),\calF}_{k,\phi,\calH}(M; E)
\ \Longrightarrow\
A\circ B \in \Psi^{m+m',(a+a'),\Cmath}_{k,\phi,\calH}(M; E)\,.
$$
The index family $\Cmath$ is given by \eqref{composition-index-family}.
\end{Th}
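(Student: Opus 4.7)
The proof plan is to reduce this composition result to the smooth composition theorem (Theorem \ref{compositionth-smooth}) already proved, plus the standard composition of conormal distributions along the diagonal. The key observation is that the lifted diagonal $\textup{Diag}_{k,\phi}$ in $M^2_{k,\phi}$ meets only the three faces $\phi f_0$, $\zf$ and $\ssc$, and it is transversal to each of these. Therefore splitting off the conormal singularity is local in a neighborhood of the diagonal that stays away from the other boundary faces.

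First I would decompose each kernel as $K_A = K_A^{\textup{loc}} + K_A^{\textup{phg}}$, where $K_A^{\textup{loc}}$ is supported in a fixed neighborhood of $\textup{Diag}_{k,\phi}$ and $K_A^{\textup{phg}}$ is supported away from it. By definition of the calculus $K_A^{\textup{loc}}$ is an element of $\rho_{\phi f_0}^{a_{\phi f_0}}\rho_\zf^{a_\zf}\rho_\ssc^{a_\ssc}\,\Psi_{k,\phi}^{m}(M;E)$ up to a smooth remainder that can be absorbed into $K_A^{\textup{phg}}$, and $K_A^{\textup{phg}} \in \calA^\calE_{k,\phi}$ vanishes to infinite order at the diagonal. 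Apply the same decomposition to $B$. Then the composition splits into four terms
\begin{equation*}
A \circ B = A^{\textup{loc}} B^{\textup{loc}} + A^{\textup{loc}} B^{\textup{phg}} + A^{\textup{phg}} B^{\textup{loc}} + A^{\textup{phg}} B^{\textup{phg}},
\end{equation*}
which I would handle separately.

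The mixed terms $A^{\textup{loc}} B^{\textup{phg}}$ and $A^{\textup{phg}} B^{\textup{loc}}$ and the purely polyhomogeneous term $A^{\textup{phg}} B^{\textup{phg}}$ fall under the hypotheses of Theorem \ref{compositionth-smooth}, once one interprets a conormal kernel supported near the diagonal with weight $\rho_{\phi f_0}^{a_{\phi f_0}}\rho_\zf^{a_\zf}\rho_\ssc^{a_\ssc}$ as polyhomogeneous at the relevant faces with leading index set given by the prescribed weights. Since the hypothesis ensures $a_{\phi f_0}\in \calE_{\phi f_0}$ etc., the contribution of the small calculus parts to the index sets at $\phi f_0$, $\zf$ and $\ssc$ is already captured by $\calE$ and $\calF$. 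Thus the smooth composition theorem immediately gives the claimed polyhomogeneous behavior at all boundary faces with index family $\mathscr{C}$ as in \eqref{composition-index-family}, and the rule $(a_f+a'_f)$ for $f=\phi f_0,\zf,\ssc$ on the $\Psi_{k,\phi}^{m+m'}$ piece.

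The one term requiring genuinely new work is $A^{\textup{loc}} B^{\textup{loc}}$, where both factors have a conormal singularity at the diagonal. I would handle this by the standard triple-space argument: on the triple space $M^3_{k,\phi}$ constructed in the proof of Theorem \ref{compositionth-smooth}, the three lifted diagonals $\Pi_L^{-1}(\textup{Diag}_{k,\phi})$, $\Pi_R^{-1}(\textup{Diag}_{k,\phi})$ and $\Pi_C^{-1}(\textup{Diag}_{k,\phi})$ all meet cleanly in the lifted triple diagonal. The product $\Pi_R^*K_A \cdot \Pi_L^*K_B$ is conormal with respect to the union of the first two, and the pushforward under $\Pi_C$ — which is transversal to these — yields a kernel conormal with respect to $\textup{Diag}_{k,\phi}$ of total order $m+m'$, with coefficient given symbolically by pointwise composition of the principal symbols. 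Uniform behavior up to $\phi f_0$, $\zf$ and $\ssc$ follows because these faces are transversal to the diagonal and the weight factors $\rho_{\phi f_0}^{a_{\phi f_0}+a'_{\phi f_0}}$ etc.\ simply add; the fact that $\Pi_L,\Pi_R,\Pi_C$ are b-fibrations was established in Theorem \ref{Pi b fib}. The main technical point, and what I expect to be the most delicate step, is the verification that the pushforward of the product of two conormal distributions under $\Pi_C$ does not produce any spurious contributions from the extra blown-up faces over $\mathcal{O}'$ near the triple diagonal; this is handled, as in the standard b-calculus, by a local model calculation in projective coordinates on the triple front faces.

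Finally, for the split statement, the argument is identical to Step 4 of the proof of Theorem \ref{compositionth-smooth}: writing $A,B$ in $2\times 2$ block form relative to the $\mathscr{H}$-$\mathscr{C}$ decomposition, applying the non-split composition in each entry, and then invoking the tropical matrix inclusion $\calEwhat \otimes_\phi \calFwhat \subset (\calE \odot_\phi \calF)\,\widehat{}\,$ established there. The only new input needed is that the small-calculus part respects the split structure at $\phi f_0$, which follows from the fact that the symbol of $\square_\phi+k^2$ is diagonal in the splitting at leading order, so parametrices inverting the symbol preserve the off-diagonal two-order gain encoded in the split calculus.
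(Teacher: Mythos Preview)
Your approach is essentially the same as the paper's, which simply cites \cite[\S 6]{guillarmou2008resolvent}: decompose into conormal-near-diagonal and smooth-off-diagonal parts, handle three of the four cross terms via the smooth composition Theorem \ref{compositionth-smooth} (or its immediate extension to a smooth factor times a conormal one), and treat the local-local term by the standard triple-space composition of conormal distributions using the b-fibrations of Theorem \ref{Pi b fib}; the split statement then follows from the tropical inclusion already proved in Step 4.

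One small correction: your final remark about needing the small-calculus part to ``respect the split structure at $\phi f_0$'' is unnecessary. By Definition \ref{def:split k phi calculus} the small-calculus summand $\rho_{\phi f_0}^{a_{\phi f_0}}\rho_\zf^{a_\zf}\rho_\ssc^{a_\ssc}\,\Psi_{k,\phi}^m(M;\Lambda_\phi M)$ carries no split condition at all --- the $\mathscr{H}$-$\mathscr{C}$ refinement is imposed only on the polyhomogeneous summand $\calA^\calE_{k,\phi,\mathscr{H}}$. So there is nothing extra to check for the $A^{\textup{loc}}B^{\textup{loc}}$ term in the split case, and the appeal to the symbol of $\square_\phi+k^2$ is irrelevant here (the theorem is stated for general elements of the calculus, not just parametrices of $\square_\phi+k^2$).
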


\begin{proof}
The statement follows from Theorem \ref{compositionth-smooth} 
exactly as in \cite[\S 6]{guillarmou2008resolvent}.
\end{proof}


\begin{thebibliography}{DWW07}

\providecommand{\url}[1]{\texttt{#1}}
\expandafter\ifx\csname urlstyle\endcsname\relax
  \providecommand{\doi}[1]{doi: #1}\else
  \providecommand{\doi}{doi: \begingroup \urlstyle{rm}\Url}\fi
  
  
\bibitem[ARS14]{AlbRocShe:RHKTDFC}
Albin, P., Rochon, F., Sher, D. \emph{Resolvent, heat kernel, and torsion under degeneration to fibred
  cusps.} Preprint math. DG/1410.8406 at arXiv.org, to appear in Memoirs of the
  AMS (2020), 101 pages, (2014).

  
\bibitem[BGV03]{heatdirac}
Berline, N., Getzler, E., Vergne, M. \emph{Heat kernels and Dirac operators.} Springer Science \& Business Media, (2003).
  
\bibitem[BLZ09]{BLZ09}
Boo\ss-Bavnbek, B., Lesch, M., Zhu, C.
\emph{The Calder\'on Projection: New Definition and Applications.}
{J. Geom. Phys.} 59 (2009): 784--826.

  
\bibitem[CCH06]{carron2006riesz} 
Carron, G., Coulhon, T., Hassell, A.
 \emph{Riesz transform and $ L^ p $-cohomology
 for manifolds with Euclidean ends.}
 Duke Math. J. 133.1 (2006): 59-93.  
  
  
    
\bibitem[FKS18]{fritsch} 
Fritsch, K., Kottke, C., Singer, M.
 \emph{Monopoles and the Sen conjecture}
preprint on arXiv:1811.00601v2 [math.DG] (2018)
 
 
  \bibitem[Gri01]{Gri:BBC}
Grieser, D. \emph{Basics of the b-calculus}. {\em Approaches to
  Singular Analysis}, Adv. in PDE, 
  Birkh\"auser.
  Basel, (2001): 30--84. 


\bibitem[GrHu09]{grieser2009pseudodifferential}
Grieser, D., Hunsicker, E.
 \emph{Pseudodifferential operator calculus for 
generalized q-rank 1 locally symmetric spaces}, J. Functional Analysis, 257 (12) 
(2009): 3748-3801.

\bibitem[GrHu14]{grieser-para}
Grieser, D., Hunsicker, E. \emph{A Parametrix Construction for the Laplacian on Q-rank 1 Locally Symmetric Spaces.} Fourier analysis. Birkh\"auser, Cham (2014): 149-186.

\bibitem[GuHa08]{guillarmou2008resolvent}
Guillarmou, C., Hassell, A. \emph{Resolvent at low energy and Riesz transform for Schr\"odinger operators on asymptotically conic manifolds. I.} Math. Annalen 341.4 (2008): 859-896.

\bibitem[GuHa09]{guillarmou2009resolvent}
Guillarmou, C., Hassell, A. \emph{Resolvent at low energy and Riesz transform for Schr\"odinger operators on asymptotically conic manifolds. II.} Ann. de l'Institut Fourier. Vol. 59. No. 4. (2009).

\bibitem[GuSh15]{guillarmou2014low}
Guillarmou, C.,  Sher, D. \emph{Low energy resolvent for
the Hodge Laplacian: applications to Riesz transform, Sobolev estimates, and analytic torsion.} International Math. Research Notices 2015.15 (2015): 6136-6210.


\bibitem[HHM04]{gravi ins}
Hausel, T., Hunsicker, E., Mazzeo, R. \emph{Hodge cohomology of gravitational instantons.} Duke Math. J. 122. No. 3. (2004) : 485--548.


\bibitem[HMM95]{analytic surgery}
Hassell, A., Mazzeo, R., Melrose, R. B. \emph{Analytic surgery and the accumulation of eigenvalues.} Comm. Anal. and 
Geom. 3.1 (1995): 115-222.


\bibitem[KoRo20]{kottke-rochon}   
Kottke, C., Rochon, F. \emph{Low energy limit for the resolvent of some fibered 
boundary operators} arxiv (2020)

\bibitem[KoSi15]{kottke-singer}  
Kottke, C., Singer, M. \emph{Partial compactification of monopoles.}
preprint on arXiv:1512.02979 (2015)

\bibitem[Maz91]{mazzeorafe1991elliptic}   
Mazzeo, R.  \emph{Elliptic theory of differential edge operators} Comm.
in PDE, 16 (10) (1991): 1615 - 1664. 


\bibitem[MaMe98]{mazzeo1998pseudodifferential}
Mazzeo, R., Melrose, R. B. \emph{Pseudodifferential operators on manifolds with fibred boundaries.} 
Mikio Sato: a great Japanese mathematician of the twentieth century. Asian J. Math. 2. No. 4. (1998): 833--866.

\bibitem[Mel92]{Mel:CCDMWC}
Melrose, R. B.
\emph{Calculus of conormal distributions on manifolds with corners}, Int. Math. Res. Not., 3. (1992): 51--61.


\bibitem[Mel93]{melrose1993atiyah}
Melrose, R. B. \emph{The Atiyah-Patodi-Singer index theorem}. AK Peters / CRC Press. 
(1993). 

\bibitem[Mel94]{Mel94}
Melrose, R.B. \emph{Spectral and scattering theory for the Laplacian on asymptotically Euclidian spaces}.
Ikawa, M. (ed.) Spectral and Scattering Theory, Marcel Dekker, New York (1994)

\bibitem[Mel96]{Mel-diff}
Melrose, R. B. \emph{Differential analysis on manifolds with corners}. 
Book in preparation, see http://www-math.mit.edu/~rbm/book.html

\bibitem[Mel08]{Mel-real}
Melrose, R. B. \emph{Real blow ups: Introduction to analysis on singular spaces}. 
Notes for lectures at MSRI. http://www.math.mit.edu/$\sim$rbm/InSisp/InSiSp.html (2008)
 
\bibitem[MeSa]{melrosesabarreto}
Sa Barreto, A., Melrose, R. B. Unpublished notes.
 
\bibitem[She13]{Sher}
Sher, D. A. \emph{The heat kernel on an asymptotically conic manifold.} Anal. PDE 6, no. 7, (2013): 1755-1791.

\bibitem[Vai01]{vaillant}
Vaillant, B. \emph{Index and spectral theory for manifolds with generalized
fibred cusps}. arXiv preprint math/0102072. (2001).


\end{thebibliography}
\end{document}